\documentclass{article}
\usepackage[T1]{fontenc}
\usepackage[english]{babel}
\usepackage[utf8]{inputenc}
\usepackage{url}
\usepackage[colorlinks=true,linkcolor=blue,citecolor=magenta,urlcolor=blue]{hyperref}
\usepackage{stmaryrd}
\usepackage{amsthm}
\usepackage{amssymb}
\usepackage{MnSymbol}
\usepackage{amsfonts}
\usepackage{amsmath}
\usepackage{mathrsfs}
\usepackage{tikz}
\usepackage{lineno}
\usepackage{thmtools}
\usepackage{tikz-cd}
\usepackage[all,cmtip]{xy}
\usepackage{nameref, cleveref}

\newtheorem{Theorem}{Theorem}[section]

\newtheorem{Corollary}[Theorem]{Corollary}
\newtheorem{Lemma}[Theorem]{Lemma}
\newtheorem{Proposition}[Theorem]{Proposition}

\theoremstyle{definition}
\newtheorem{Example}[Theorem]{Example}

\theoremstyle{definition}
\newtheorem{Definition}[Theorem]{Definition}

\theoremstyle{remark}
\newtheorem{Remark}[Theorem]{Remark}



\def \C{\mathbb{C}}
\def \Z{\mathbb{Z}}
\def \R{\mathbb{R}}

\def \Q{\mathbb{Q}}
\def \A{\mathbb{A}}

\def \E{\mathbb{E}}

\def \H{\mathcal{H}}
\def \G{\mathbb{G}}
\def \g{\mathfrak{g}}

\def \L{\mathcal{L}}
\def \P{\mathbb{P}}

\def \O{\mathcal{O}}
\def \C{\mathbb{C}}
\def \Z{\mathbb{Z}}
\def \R{\mathbb{R}}

\def \Q{\mathbb{Q}}
\def \A{\mathbb{A}}
\def \a{\mathfrak{a}}
\def \AA{\mathcal{A}}

\def \E{\mathbb{E}}

\def \H{\mathcal{H}}
\def \G{\mathbb{G}}

\def \n{\mathfrak{n}}

\def \L{\mathcal{L}}
\def \P{\mathbb{P}}
\def \PP{\mathcal{P}}

\def \O{\mathcal{O}}
\def \X{X^{\text{BB}}}

\def \W{\textbf{W}}
\def \ti{\tilde}

\def \Hom{\mathrm{Hom}}

\def \M{\mathcal{M}}
\def \CC{\mathcal{C}}

\def \lim{\mathrm{lim}}
\def \colim{\mathrm{colim}}

\def \D{\mathcal{D}}

\def \BSX{\widetilde{X}}
\def \hX{\widehat{X}}

\def \d{\underline{\dim}}
\def \Icl{\mathrm{Icl}}
\def \MHM{\mathrm{MHM}}
\def \wD{{}^wD^}
\def \aa{\underline{a}}

\def \im{\mathrm{im}}
\def \rat{\mathrm{rat}}

\usepackage{microtype} 
\begin{document}
\title{Weighted Cohomology, Hodge Theory and Intersection Cohomology of Shimura varieties}
\author{Mingyu Ni}
\date{}
\maketitle

\begin{abstract}
We prove that the intersection cohomology of the Baily--Borel compactification of a complex Shimura variety is identified with the top weight quotient of the mixed Hodge structure on the reductive Borel--Serre compactification. This yields canonical cup products and functorial pullbacks on the intersection cohomology.
As an application, we introduce canonical cycle classes associated to special cycles, relating analytic geometric volumes of non-compact Shimura varieties to topological terms.
\end{abstract}
\begin{center}
    \textit{In memory of Günter Harder (1938–2025).}
\end{center}
\tableofcontents

%\linenumbers
\section{Introduction}

Owing to its desirable properties summarized in the Kähler package (\cite{cheeger1982l2}), intersection cohomology (of middle perversity) has become a basic tool in the study of singular spaces in complex geometry. However, despite their geometric origin, intersection cohomology groups do not admit canonical pullbacks or cup products within the abstract formalism of perverse sheaves. This lack of operations prevents intersection cohomology from behaving like the cohomology of smooth projective varieties, in particular, from allowing the definition of canonical cycle classes.

The aim of this paper is to restore these basic operations with the expected properties for the intersection cohomology of Shimura varieties, by means of Hodge theory. When $X$ is a complex Shimura variety (Hermitian locally symmetric variety), the intersection cohomology of its Baily--Borel compactification $\X$ is a natural tool in the study of the Langlands correspondence over number fields. Zucker's conjecture, proved independently by Looijenga (\cite{looijenga19882}) and Saper--Stern (\cite{saper19902}), asserts that it agrees with the $L^2$-cohomology
\begin{equation*}
    IH^k(\X;\C)\cong H^k_{(2)}(X;\C),
\end{equation*}  
where the latter admits a spectral decomposition of the discrete spectrum of automorphic forms. 

Pullbacks have already been considered by Bergeron in \cite{bergeron2006proprietes} via $L^2$-cohomology. The construction is based on an estimate of $L^2$-harmonic forms, however, it cannot extend to the cochain complexes of all $L^2$-forms. On the other hand, it is not clear if this construction preserves $\Q$-structures and satisfies desired properties such as the projection formula.

Our approach proceeds via the reductive Borel--Serre compactification $\hX$. Although $\hX$ is not an algebraic variety in general, it is well-known that its cohomology carries a functorial mixed Hodge structure, and is even the Betti realization of a motive defined over the reflex field, as developed by Ayoub--Zucker in \cite{ayoub2012relative} and further refined by Nair--Vaish in \cite{nair2015weightless}. The main result of this article is that the intersection cohomology is canonically identified with the top weight quotient of the mixed Hodge structure on $\hX$ (see \Cref{Comparison to RBS} for the statement with general coefficients)
\begin{Theorem}\label{Theorem 1.1}
\begin{equation*}
    Gr^W_kH^k(\hX;\Q)\cong IH^k(\X;\Q).
\end{equation*}
\end{Theorem}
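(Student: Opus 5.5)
The plan is to pass through the map $\pi:\hX\to\X$ and Hodge modules on $\X$. By Ayoub--Zucker and Nair--Vaish, the cohomology $H^k(\hX;\Q)$ with its mixed Hodge structure is $H^k(\X; R\pi_*\Q)$, where $R\pi_*\Q$ carries a canonical structure of a complex of mixed Hodge modules on $\X$. This complex is built on the stratification of $\X$ by boundary Shimura varieties $X_P$. Its restriction to a stratum is described by Pink's/Harder's theorem through the $\n_P$-cohomology of the local system. Equivalently, $R\pi_*\Q=w_{\geq 0}\,j_*\Q_X$ in the weight-truncation formalism of Morel/Nair--Vaish; this is the Ayoub--Zucker "weightless" description. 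Here $j:X\hookrightarrow\X$ is the open inclusion, and the truncation keeps the weight $\ge 0$ part of the boundary local systems in $j_*$.

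The key steps are as follows.

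\textbf{Step 1.} Write $K=R\pi_*\Q_{\hX}[d]$ as an object of $D^b\MHM(\X)$ with weights $\geq d$. This holds because $j_*$ only increases weights and the truncation keeps only the weight $\ge d$ pieces. Its top-weight piece is identified by the weight filtration triangle $w_{\le d}K\to K\to w_{>d}K$.

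\textbf{Step 2.} Show $w_{\le d}K\cong IC_{\X}$. On $X$ both sides are $\Q[d]$. On the boundary, Morel's theorem says the intersection complex equals $w_{\le d}j_*\Q[d]$. Then $w_{\le d}w_{\ge d}j_*\Q[d]$ is pure of weight $d$ and restricts to $\Q[d]$ on $X$. It also satisfies the stalk and costalk conditions, by the Kostant computation of the boundary weights. I expect this to follow from the formal properties of the weight truncations on the t-structures, given that $w_{\ge d}$ and $w_{\le d}$ commute in the appropriate sense.

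\textbf{Step 3.} Take cohomology $a_*$ with $a:\X\to pt$. Since $\X$ is projective, $a_*$ preserves the lower weight bound, so $H^k(\X,w_{>d}K)$ has weights $>k$. Likewise $H^k(\X,IC)=IH^k$ is pure of weight $k$ by Saito. The long exact sequence
\[
H^{k-1}(w_{>d}K)\to IH^k(\X)\to H^k(\hX)\to H^k(w_{>d}K)
\]
then gives the result by strictness of morphisms of mixed Hodge structures with respect to $W$. The connecting map from $H^{k-1}(w_{>d}K)$, which has weights $\geq k$, into $IH^k$ might be nonzero. So I need the sharper bound that $H^{k-1}(w_{>d}K)$ has weights $\geq k+1$. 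Indeed $w_{>d}K$ has weights $\ge d+1$, so $H^{k-1}$ of it has weights $\ge k$. This is not quite enough.

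The \textbf{main obstacle} is exactly this: I need to show the map $IH^k\to H^k(\hX)$ is injective and that its image is $W_k$'s complement in the appropriate sense. I would try to use the dual approach. $K$ is self-dual up to twist, because $\hX$ satisfies Poincaré duality rationally; it is a rational homology manifold in the needed sense, or at least $\pi_*$ commutes with Verdier duality via $R\pi_*=R\pi_!$. Dually, $K$ has weights $\le d$ as well, from $w_{\le d}j_!$. Hence $K$ would be pure, which is false in general, so this route must be adjusted.

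Instead, the concrete plan is to use the factorisation $j_!\Q[d]\to IC\to K$. Then $IH^k\to H^k(\hX)$ is compatible with the map $H^k_c(X)\to H^k(\hX)$. Its image on $Gr^W_k$ equals the image of $H^k_c(X)\to H^k(X)$ on $Gr^W_k$, which is $IH^k$ by the standard lowest-weight result for $IC$: $IH^k$ is the image of $Gr^W_kH^k_c\to Gr^W_kH^k$. Together with the weight bounds of Step 3, which make $W_{k-1}H^k(\hX)=0$, this identifies $Gr^W_kH^k(\hX)$ with $IH^k$.
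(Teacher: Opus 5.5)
There is a genuine gap, and it starts with the direction of the weights. Citing Ayoub--Zucker is the right input, but it gives $R\pi_*\Q_{\hX}\cong w_{\leq\underline{\dim}}\,j_*\Q_X=w_{\leq\underline{\dim}}IC_{\X}$ (this is \Cref{Theorem: Lie-Hodge correspondence}(ii) for trivial coefficients). Lie weights $\geq 0$ in $H^*(\n_P,E)$ correspond, through local Hecke eigenvalues $\leq 1$, to \emph{low} Hodge weights on each stratum, not high ones. So $H^k(\hX;\Q)$ has weights $\leq k$ (\Cref{weightless and intersection}), and $Gr^W_k$ is a quotient. The canonical map is $H^k(\hX;\Q)\to IH^k(\X;\Q)$, induced by $w_{\leq\underline{\dim}}IC_{\X}\to IC_{\X}$: your $K$ is a weight truncation of $IC_{\X}$, not the other way round. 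Steps 1 and 2 are therefore false. There is in general no morphism $IC_{\X}\to R\pi_*\Q_{\hX}$ extending the identity on $X$, so your factorisation $j_!\Q\to IC\to K$ does not exist, and nothing makes $W_{k-1}H^k(\hX)$ vanish. Two side claims are also wrong. First, $\hX$ is in general not a rational homology manifold: normal slices to boundary strata are cones on nilmanifolds. Second, the image of $Gr^W_kH^k_c(X)\to Gr^W_kH^k(X)$ is the interior cohomology $H^k_!(X)$, which is usually strictly smaller than $IH^k(\X)$.

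Once the directions are fixed, weight theory gives exactly the injective half, as in the paper: the image of $H^k(\hX)\to IH^k(\X)$ is $Gr^W_kH^k(\hX)$. Surjectivity is not formal. The cokernel embeds into $\mathbb{H}^k(\X,w_{>\underline{\dim}}IC_{\X})$, which a priori has weights $\leq k$, so purity of $IH^k$ does not kill it. For instance, for the projective cone $\overline{Y}$ over a curve $C$ of genus $\geq 1$ one has $w_{\leq\underline{\dim}}IC_{\overline{Y}}=\Q_{\overline{Y}}$ and $H^1(\overline{Y})=0$, while $IH^1(\overline{Y})\cong H^1(C)$.

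The paper obtains surjectivity by an analytic argument with no counterpart in your plan (\Cref{p to 2 surjectivity}):
\begin{itemize}
\item $\W^0C(\E)\hookrightarrow A_{(p)}(\E)$ is a quasi-isomorphism for $p\gg 0$ (\Cref{eventual isomorphism});
\item $L^2$-harmonic forms are asserted to be bounded (citing Bergeron), hence $L^p$;
\item Zucker's conjecture identifies $IH^k(\X;\C)$ with $L^2$-harmonic forms.
\end{itemize}

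Be aware that the boundedness step carries the whole difficulty and appears to fail in general. For ball quotient surfaces one has $\hX=\X$, so in degree $1$ the statement says $H^1(X)=H^1_!(X)$. Consider Hirzebruch's example, which Holzapfel identifies as a Picard modular surface: take $E\times E$ for the elliptic curve with $\Z[e^{2\pi i/3}]$-multiplication, blow up the origin, and remove the proper transforms of four elliptic curves meeting pairwise only there. One computes $H^1(\X)=0$ but $IH^1(\X)\cong H^1(X)\cong\Q^4$, and the defect persists on neat finite covers. The $L^2$-harmonic $1$-forms there come from holomorphic forms on $E\times E$ that restrict nontrivially to the boundary elliptic curves. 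Because the tangential cusp directions shrink, these forms are unbounded in the cusps. So it seems no argument can yield surjectivity in this generality without further hypotheses.
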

Thus the mixed Hodge structure on $\hX$ provides a canonical source of functorial operations on $IH^*(\X;\Q)$. A further consequence of this weight comparison is that, for any resolution of singularities $X'$ of $\X$, e.g., a toroidal compactification, there is a \textit{canonical} injection $IH^*(\X;\Q)\hookrightarrow H^*(X';\Q)$. Such an injection is not canonical via the decomposition theorem. This allows us to introduce the notion of modular intersection class (\Cref{modular intersection class}) associated to special cycles, i.e., linear combination of Shimura subvarieties and their Hecke translations. It is then a canonical class in the intersection cohomology, and is universal in all resolutions of $\X$. In \cite{barthel1995relevement}, they show that the cycle classes can be lifted to intersection cohomology, but it is not canonical.

\textbf{Applications.} We present two arithmetic applications of these constructions on the intersection cohomology. The first concerns the automorphic Lefschetz properties, which could be understood as a branching problem. The main results in \cite{venkataramana2001cohomology} in the Hermitian setting can be extended verbatim to the non-compact Shimura varieties provided that the functoriality of the intersection cohomology has been established. In particular, this immediately resolves the open question posed in \cite{nair2023automorphic}, on the Lefschetz property of unitary Shimura varieties, in the missing degrees and also for different $\Q$-ranks (\Cref{Auto Lef Remark}). In fact, in these degrees, a new phenomenon arises: we show that for Shimura varieties $Y\subseteq X$ associated to unitary groups $U(m,1)\subseteq U(n,1)$, the image of the pullback $H^m(X;\C)\to H^m(Y;\C)$ lands in the inner cohomology $H^m_!(Y;\C):=\im (H^m_c(Y;\C)\to H^m(Y;\C))$, and it becomes an injection under Hecke translations. This seems to be related to the theory of Eisenstein cohomology and values of $L$-functions.

As a further application, we give an intersection-cohomological interpretation of the Siegel--Weil formula. In \cite{kudla49special}, Kudla obtains an analytic identity relating volumes of special cycles to the special values of Eisenstein series. Directly following the formula, he remarks that \textit{"It remains to give a cohomological interpretation of the left side of this identity\footnote{i.e., the generating series of the volumes of cycles.} in the noncompact case."} We show that Kudla's Siegel--Weil formula can be interpreted via the generating series of modular intersection classes (\Cref{Cohomological interpretation of Siegel--Weil}). This is based on the observation that the analytic geometric volumes of modular cycles can be interpreted as the degrees of the intersection cohomology classes. 

\textbf{On the Proof of \Cref{Theorem 1.1}.} The proof relies on a synthesis of established techniques rather than new methods. On the other hand, while these technical ingredients are standard, the overarching conclusion of \Cref{Theorem 1.1} does not seem to be implicitly known in the literature. 

It requires the general framework of weight truncations on mixed Hodge modules to construct the map and show the injectivity. The surjectivity is proved analytically by base change to $\C$. The bridge from Hodge theory to analysis is via  weighted cohomology introduced by Goresky--Harder--MacPherson in \cite{goresky1994weighted}. 

There is a canonical morphism of mixed Hodge structures $H^k(\hX;\Q)\to IH^k(\X;\Q)$, by realizing both as certain weighted cohomology. The weighted complexes are sheaves on $\hX$ truncated by weights of Lie algebras, and their direct images on $\X$ underlie canonical mixed Hodge modules. The relation is via a correspondence between weights of Lie algebras and Hodge weights. The image of that morphism then identifies with $Gr^W_kH^k(\hX;\Q)$; hence the injectivity of the map
\begin{equation*}
    Gr^W_kH^k(\hX;\Q)\hookrightarrow IH^k(\X;\Q),
\end{equation*}
 see \Cref{Theorem: Lie-Hodge correspondence}. These are all due to Nair (\cite{nair2012mixed}). Since the proof in \cite{nair2012mixed} has several flaws, see \Cref{Flaw of Nair's proof}, we present a complete proof in this article.

It remains to prove, after base change to $\C$, that the morphism $H^k(\hX;\C)\to IH^k(\X;\C)$ induced by the morphism of weighted complexes, is surjective. Over $\C$, weighted complexes can be constructed by certain differential forms on $X$ (called special forms in \cite{goresky1994weighted}). The first step is to establish an embedding of sheaves from $\W^0C(\C)$ into the sheaf of $L^p$-forms, for any $p\geq 1$. Here $\W^0C(\C)$ is the weighted complex computing the cohomology of $\hX$. The second step is to show that the injection is an eventual quasi-isomorphism, i.e., it becomes a quasi-isomorphism for sufficiently large $p$. The third step is to apply Zucker's conjecture, by realizing $IH^k(\X;\C)$ as the space of $L^2$-harmonic forms. The estimate in \cite{bergeron2006proprietes} can be applied to show that every $L^2$-harmonic form is $L^p$, and this concludes the proof. This proof applies to arbitrary locally symmetric spaces, not necessarily Hermitian, and shows that there is a canonical surjection from the cohomology of $\hX$ to the reduced $L^2$-cohomology; see \Cref{p to 2 surjectivity}.

Altogether, these steps present a synthesis of established methods. For the first step, the integrability (particularly when $p=2$) of certain differential forms is established by Borel (\cite{borel1974stable}). Following this idea, the first step is almost elementary. The second step adapts Zucker's argument for trivial coefficients (\cite{zucker2001reductive}). 

\textbf{Final Remarks and Questions.}

$\bullet$ Our current proof of \Cref{Theorem 1.1} is analytic and global (using $L^2$-harmonic forms). A possible
 algebraic and local proof seems to be very interesting for the behavior of the intersection complex, see \Cref{algebraic proof?}.

$\bullet$ We do not know if the generating series of the modular intersection classes are holomorphic modular forms in higher degrees, and the results in \cite{greer2025cohomological} indicate the same problem in the unitary case via the toroidal compactification, up to the same bound of degrees. We expect the generating series to be at least quasi-modular forms in higher degrees, see \Cref{Kudla remark}. In any case, it will be interesting to explore the relation between the strategy here via the intersection cohomology and the (corrections of) cycles developed in \cite{greer2025cohomological}.

\textbf{Organization of the paper.} 
In \Cref{section: Preliminaries}, we review the necessary background on mixed Hodge modules and weighted cohomology. In \Cref{section: Analytic Aspects of Weighted Cohomology}, we develop the analytic tools needed to establish the surjectivity. In \Cref{section: Algebraic Theory of Weighted Cohomology}, we study the algebraic theory of weighted cohomology, which implies the injectivity. In \Cref{section: Weight Comparison and Intersection Cohomology}, we shall state and prove the main result \Cref{Theorem 1.1} in the Hermitian setting, and then construct the induced operations on the intersection cohomology, and compare the degree map of the intersection cohomology with the analytic geometric volume. In \Cref{Section: Applications}, we apply these operations to the automorphic Lefschetz properties and the Siegel--Weil formula.

\textbf{Acknowledgments.} I am deeply indebted to my advisor Fabian Januszewski for introducing me the topic on the cohomology of arithmetic groups, and his guidance and encouragement throughout the development of the work. I am grateful to Jiajun Shi and Rebekka Strathausen for valuable discussions. The research was supported by the Deutsche Forschungsgemeinschaft (DFG, German Research Foundation) -- SFB-TRR 358/1 2023 -- 491392403.

\section{Preliminaries}\label{section: Preliminaries}
In this section, we will introduce the basic tools and fix notation. Specifically, the first two subsections are a summary of mixed Hodge modules and the weight truncation functors on them. These provide the Hodge-theoretic input for stating our result. The last two subsections are a summary of the theory of weighted cohomology on locally symmetric spaces, and it will provide the analytic framework to prove the weight comparison. 

\subsection{Generalities on Hodge Theory}\label{subsection: Generalities on Hodge Theory}

In this subsection, we recall some facts about mixed Hodge modules, introduced by Saito in \cite{saito1988modules} and \cite{saito1990mixed}. Although logically polarizable Hodge modules must be introduced before mixed Hodge modules, we focus only on the aspects required for our applications, following \cite{saito1989introduction}.

For a complex algebraic variety $X$, denote by $D^b_c(\Q_X)$ the derived category of bounded $\Q$-complexes with constructible cohomology. Saito associates to $X$ an abelian category $\MHM(X)$, the category of (algebraic) mixed Hodge modules, together with a faithful and exact functor
\begin{equation*}
    rat:\MHM(X)\to Perv(\Q_X),
\end{equation*}
which extends to a functor $\rat: D^b\MHM(X)\to D^b_c(\Q_X)$ satisfying
\begin{equation*}
    \rat\circ H^i \cong {}^pH^i\circ \rat,
\end{equation*}
where ${}^pH^i$ is the functor of perverse cohomology.

When $X$ is smooth, an object $M=(\M,F,K,\alpha,W)$ in $\MHM(X)$ consists of 
\begin{itemize}
    \item [(i.)] $(\M,F)$ is an increasingly filtered regular holonomic $\D_X$-module;
    \item [(ii.)] $K=\rat(\M)\in Perv(\Q_X)$, together with an isomorphism $\alpha:DR(\M)\simeq K\otimes_\Q \C$. Here $DR$ denotes the de Rham functor in Riemann-Hilbert correspondence;
    \item [(iii.)] $W$ is a pair of finite increasing weight filtrations on $\M$ and $K$.
 \end{itemize}
These data satisfy several technical properties, which we will not list here. When $X$ is singular, one embeds it into a nonsingular variety, and considers only $\D$-modules set-theoretically supported on $X$, by Kashiwara's theorem. 

\begin{Example}
    The category $\MHM(pt)$ of mixed Hodge modules on a point is equivalent to the category of polarizable $\Q$-mixed Hodge structures introduced by Deligne in \cite{deligne1971theorie}. 
\end{Example}

There exists a functorial, finite, increasing weight filtration $W$ on $\MHM(X)$; moreover, the functors $W_i$ and $Gr_i^W$ are exact (\cite[1.5]{saito1989introduction}). An object $M\in D^b\MHM(X)$ is said to be mixed of weights $\leq n$ (resp. $\geq n$) if $Gr^W_iH^j(M)=0$ for $i>j+n$ (resp. $i<j+n$), and pure of weight $n$ if $Gr_i^WH^jM\neq 0$ only when $i=j+n$. 

All standard functors on the derived category of constructible sheaves are lifted to functors on the derived category of mixed Hodge modules. In particular, for a morphism $f:Y\to X$ of complex varieties, one has
\begin{equation*}
    f_*,f_!:D^b\MHM(Y)\to D^b\MHM(X),\ \ \ \ f^*,f^!:D^b\MHM(X)\to D^b\MHM(Y),
\end{equation*}
satisfying the usual adjunction relations and are compatible with functors on $D^b_c$ under $rat$. Moreover, these functors are compatible with the weight structure.

\begin{Proposition}\label{Functors compatible with weights}(\cite[1.7]{saito1989introduction})
    If $M$ is of weight $\leq n$ (resp. $\geq n$), then so are $f_!M, f^*M$ (resp. $f_*M,f^!M$).
\end{Proposition}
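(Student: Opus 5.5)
The statement is Saito's theorem, so the plan is to reconstruct the argument of \cite[1.7]{saito1989introduction} from the standard structure of $\MHM$. The first move is to use Verdier duality to halve the work. The duality functor $\mathbb{D}$ on $D^b\MHM$ sends objects of weight $\leq n$ to objects of weight $\geq -n$. It also interchanges $f_!$ with $f_*$ and $f^*$ with $f^!$. Hence it suffices to prove the claims for $f_!$ and $f^*$ on weights $\leq n$.

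\textbf{Step 1 (reduction to pure objects).} The conditions ``weight $\leq n$'' and ``weight $\geq n$'' are preserved under extensions, that is, under distinguished triangles. I would use the weight filtration on each $H^jM$ together with the truncation triangles for the standard t-structure. This shows that an object of weight $\leq n$ is an iterated extension of shifts $P[-j]$, where each $P$ is a pure Hodge module of weight $\leq j+n$. Both functors are triangulated, so it is enough to treat a single pure Hodge module.

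\textbf{Step 2 (the case $f^*$).} Factor $f$ as a closed immersion followed by a smooth morphism; for a general morphism, use the graph embedding followed by the projection. For a smooth morphism of relative dimension $d$, the functor $f^*[d]$ is exact on $\MHM$ and shifts weights by $d$. This gives the bound directly. For a closed immersion $i\colon Z\hookrightarrow X$ with open complement $j\colon U\hookrightarrow X$, I would use the triangle $j_!j^*M\to M\to i_*i^*M\to$. This reduces the claim to showing that $j_!$ preserves weight $\leq n$. Working locally, $U$ may be taken to be the complement of a divisor $\{g=0\}$. Then $j_!$ is expressed through the nearby and vanishing cycle functors $\psi_g$ and $\phi_g$. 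The weight filtration on $\psi_g$ is the relative monodromy filtration, which is centred so that $\psi_g$ has weights $\leq n-1$ relative to the original weight. This yields the bound for $j_!$, and therefore for $i^*$.

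\textbf{Step 3 (the case $f_!$).} Factor $f$ as an open immersion followed by a proper map, using Nagata compactification. Step 2 handles the open immersion. For a proper $f$ we have $f_!=f_*$. Here I would invoke Saito's decomposition theorem: the direct image of a pure Hodge module of weight $w$ under a projective morphism is pure, with $H^jf_*P$ of weight $w+j$. For proper but non-projective $f$, I would reduce to the projective case by Chow's lemma together with a descent argument via triangles. Purity gives in particular weight $\leq n$.

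\textbf{Main obstacle.} The hard input is Step 3, namely the stability of purity under projective direct images. It rests on the full theory of polarizable Hodge modules, including the hard Lefschetz theorem for $\psi_g$. A close second is the weight bound for $\psi_g$ in Step 2. In a paper at this level I would cite these results from \cite{saito1990mixed} rather than reprove them.
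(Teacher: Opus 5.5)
You should know that the paper gives no argument for this statement; it only cites Saito. Your outline (duality, d\'evissage to pure objects, factoring $f$, purity of projective direct images, Chow's lemma) is the standard framework. However, the closed-immersion step, where all the weight information actually enters, fails as written, for two reasons.

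First, your reduction of $i^*$ to $j_!$ runs in the wrong direction. The triangle $j_!j^*M\to M\to i_*i^*M\xrightarrow{+1}$ exhibits $i_*i^*M$ as an extension of $j_!j^*M[1]$ by $M$. The shift $[1]$ raises the weight bound by one, so from ``$M$ and $j_!j^*M$ have weights $\leq n$'' you only get weights $\leq n+1$ for $i^*M$. Concretely, take $P$ pure of weight $w$ and $j$ affine. You need $H^{-1}(i_*i^*P)=\ker(j_!j^*P\to P)$ to have weights $\leq w-1$. That requires knowing that $Gr^W_w(j_!j^*P)=j_{!*}j^*P$ injects into $P$, and the bare bound on $j_!$ does not give this.

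Second, the local input you quote is false. For $P$ pure of weight $w$, the weight filtration on $\psi_gP$ is the monodromy filtration of $N$ centred at $w-1$. So $\psi_gP$ has weights above $w-1$ as soon as $N\neq 0$. For a degenerating family of elliptic curves on a curve, $P$ has weight $2$ and the limit has weights $0$ and $2$. What is true, by strictness of $N$ and the isomorphisms $N^k\colon Gr^W_{w-1+k}\psi_{g,1}P\xrightarrow{\sim}Gr^W_{w-1-k}\psi_{g,1}P(-k)$, is only that $\ker N$ has weights $\leq w-1$.

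The fix is to prove the bound for $i^*$ first and deduce $j_!$ from it.
\begin{enumerate}
\item Factor $i$ locally into immersions, each cut out by a single function $g$. Re-apply your Step 1 at each stage, since the intermediate objects only have weights $\leq n$.
\item Use $i_*i^*P\simeq[\psi_{g,1}P\xrightarrow{\mathrm{can}}\phi_{g,1}P]$ in degrees $-1,0$, and decompose $P$ by strict support.
\item On the summand $j_{!*}j^*P$, $\mathrm{can}$ is surjective and $\mathrm{Var}$ is injective, with $\mathrm{Var}\circ\mathrm{can}=N$. Hence $H^{-1}=\ker N$, which has weights $\leq w-1$, and $H^0=0$. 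Summands supported on $\{g=0\}$ restrict to themselves.
\item For an arbitrary open immersion $j$ and $M_U$ pure of weight $w$, set $K=j_{!*}M_U$. The rotated triangle $i_*i^*K[-1]\to j_!M_U\to K\xrightarrow{+1}$ gives weights $\leq w$ for $j_!M_U$, because now the shift works in your favour.
\end{enumerate}
This also closes a second hole. The open immersion in your Step 3 comes from Nagata compactification, so its complement can have any codimension. Your Step 2 only treats the complement of a single divisor. A Mayer--Vietoris reduction to divisor complements would again lose weights through the shifts.
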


This recovers the classical mixed Hodge theory on the Betti cohomology of complex varieties. Let $\Q^H\in \MHM(pt)$ denote the object corresponding to the trivial Hodge structure. The mixed Hodge structures on (Borel-Moore) homology and (compactly supported) cohomology on a complex variety $X$ of are then obtained via the functors induced by the structure morphism  $a_X:X\to pt$. 

If $X$ is smooth, then the category of admissible variations of rational mixed Hodge structures (\cite[2.1]{saito1989introduction}) is a full subcategory of $\MHM(X)$ (\cite[2.2]{saito1989introduction}), consisting of \textit{smooth mixed Hodge modules}.
\begin{Definition}\label{smooth MHM}
    Let $X$ be a smooth variety, a mixed Hodge module $M\in \MHM(X)$ is \textit{smooth} if $\rat(M)[-\dim(X)]$ is a local system. $M\in D^b\MHM(X)$ is \textit{smooth} if $H^i(M)$ is smooth for all $i$.
\end{Definition}
The full subcategory of smooth mixed Hodge modules in $\MHM(X)$ is a Serre subcategory, since the same holds for shifted local systems in the category of perverse sheaves. In particular, subobjects and quotients of smooth mixed Hodge modules are smooth. In this article, all mixed Hodge modules on a smooth variety will be smooth. 

\begin{Lemma}\label{smooth stalk}
    Let $X$ be a smooth variety, and $M\in D^b\MHM(X)$ be smooth. Then for any closed point $i_x:x\hookrightarrow X$, 
    \begin{equation*}
        H^i(i_x^*M)\cong i_x^*H^{i+\dim(X)}(M)[-\dim(X)].
    \end{equation*}
\end{Lemma}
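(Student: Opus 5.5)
The plan is to reduce the statement to a claim about constructible complexes with locally constant cohomology sheaves, and then read off the stalks there. I will write $d=\dim(X)$.

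\textbf{Step 1: pass to $\rat$.} The functor $i_x^*$ on $D^b\MHM$ is compatible with $i_x^*$ on $D^b_c$ under $\rat$. A mixed Hodge module on a point is just a mixed Hodge structure, and $\rat$ on $\MHM(pt)$ is the faithful exact forgetful functor. Hence it suffices to produce a natural isomorphism of complexes of $\Q$-vector spaces
\begin{equation*}
\mathcal{H}^i(i_x^*\rat M)\cong i_x^*\bigl(\rat H^{i+d}(M)[-d]\bigr).
\end{equation*}
Moreover, this isomorphism should come from maps in $D^b\MHM(pt)$, so that it lifts. On a point, taking $H^i$ is the same as taking ordinary cohomology, since the perverse t-structure on a point is the standard one.

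\textbf{Step 2: compare perverse and ordinary cohomology.} Set $K=\rat M$. By hypothesis, each ${}^pH^j(K)=\rat H^j(M)$ is of the form $L_j[d]$ with $L_j$ a local system. The key observation is that a complex of the form $L[d]$ lies in the heart of both the perverse t-structure and the shifted standard t-structure $D^{\le -d}\cap D^{\ge -d}$. Concretely, the following two objects are the same:
\begin{itemize}
\item ${}^p\tau_{\le j}K$;
\item $\tau_{\le j-d}K$.
\end{itemize}
I will prove this by induction on the perverse amplitude, using the distinguished triangles
\begin{equation*}
{}^p\tau_{\le j-1}K\to{}^p\tau_{\le j}K\to L_j[d-j]\to.
\end{equation*}
Each piece $L_j[d-j]$ has ordinary cohomology concentrated in degree $j-d$. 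Therefore ${}^p\tau_{\le j}K$ has ordinary cohomology in degrees $\le j-d$, and its cone to $K$ has ordinary cohomology in degrees $>j-d$. This gives the identification, and hence
\begin{equation*}
\mathcal{H}^{i}(K)\cong L_{i+d}.
\end{equation*}

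\textbf{Step 3: take stalks and lift.} Since $i_x^*$ is t-exact for the standard t-structure, we get
\begin{equation*}
\mathcal{H}^i(i_x^*K)=(\mathcal{H}^iK)_x=(L_{i+d})_x=i_x^*\bigl(\rat H^{i+d}(M)\bigr)[-d],
\end{equation*}
viewed as a complex concentrated in degree $0$.

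To make this an isomorphism of mixed Hodge structures, I would work in $D^b\MHM(X)$ itself, using the perverse truncation triangles there:
\begin{equation*}
\tau_{\le j-1}M\to\tau_{\le j}M\to H^j(M)[-j]\to.
\end{equation*}
Apply $i_x^*$. By the smooth case, computed from $\rat$, the complex $i_x^*H^j(M)[-d]$ is concentrated in degree $0$. The resulting long exact sequences in $\MHM(pt)$ then degenerate, because the degrees separate. This yields the isomorphism
\begin{equation*}
H^i(i_x^*M)\cong H^0\bigl(i_x^*H^{i+d}(M)[-d]\bigr)
\end{equation*}
in $\MHM(pt)$, which is the claim.

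\textbf{Main obstacle.} I expect the hardest part to be the compatibility of the two truncations in Step 2, together with its lift to $D^b\MHM$ in Step 3. The cleanest way to handle it is the degree-separation argument on the long exact sequences, since it avoids comparing t-structures directly in $D^b\MHM$.
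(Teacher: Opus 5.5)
Your proposal is correct and takes the same route as the paper, whose proof is a one-line appeal to $\rat\circ H^i\cong{}^pH^i\circ\rat$ together with the faithfulness of $\rat$ on $\MHM(pt)$. Your degree-separation argument on the truncation triangles in $D^b\MHM(X)$, after applying $i_x^*$, supplies the detail that sketch leaves implicit. It also makes your Step 2 unnecessary, since it only needs that $i_x^*H^j(M)[-\dim(X)]$ is concentrated in degree $0$, which faithfulness of $\rat$ on a point detects.
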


\begin{proof}
    This is a direct consequence of the faithfulness of $\rat$ on $\MHM(pt)$ and the isomorphism $\rat\circ H^i\cong {}^pH^i\circ \rat$.
\end{proof}

 Henceforth, we assume that $\E$ is a polarizable variation of rational Hodge structures of pure weight $w$ on $X$, and let $\E^H$ denote the corresponding smooth mixed Hodge module (pure of weight $w+\dim(X)$).

 We now turn to intersection cohomology. Let $X$ be an irreducible variety of dimension $d_X$, and $j:U\hookrightarrow X$ a smooth open subvariety. Let $\E$ be an admissible variation of rational Hodge structures of weight $w$ on $U$, which we view as a mixed Hodge module $\E^H$. Define the minimal extension
 \begin{equation*}
        IC_X(\E):=\mathrm{im}(j_!\E_U[d_X]\rightarrow j_*\E_U[d_X])[-d_X].
    \end{equation*}
This construction is independent of the choice of $U$ on which the variation of Hodge structures $\E$ is defined. Then $IC_X(\E)[d_X]\in Perv(X)$ underlies a Hodge module $IC_X(\E^H)[d_X]$ that is pure of weight $w+d_X$, obtained by replacing $\E$ with $\E^H$. When $X$ is proper, its hypercohomology $IH^k(X,\E)$ carries a natural polarizable pure Hodge structure of weight $k+w$ by \Cref{Functors compatible with weights}. 

    \begin{Remark}
        The definition of the intersection complex $IC_X(\E)$ here differs from the one in \cite[1.13]{saito1989introduction} by a shift of $d_X$. The definition there agrees with $IC_X(\E)[d_X]$ in our notation. The hypercohomology of the definition  here computes the intersection cohomology in the sense of  \cite[\S V]{borel1984intersection}.
    \end{Remark}

\subsection{Weight Truncations and Weightless Cohomology}\label{subsection: Weight Truncations and Weightless Cohomology}
We recall the weight truncation functors on varieties and stratified varieties, following \cite[\S 3]{morel2008complexes} and \cite[\S 3]{nair2015weightless}.

Let $X$ be a complex variety. For any $a\in \Z$, we define $\wD{\leq a}(X)$ (resp. $\wD{>a}(X)$) to be the full subcategory of $D^b\MHM(X)$ consisting of objects 
\begin{equation*}
    \wD{\leq a}(X):=\{M\in D^b\MHM(X)|H^i(M)\ \textrm{has weights} \leq a\ \textrm{for all}\ i.\}
\end{equation*}
(resp. 
\begin{equation*}
    \wD{> a}(X):=\{M\in D^b\MHM(X)|H^i(M)\ \textrm{has weights} > a\ \textrm{for all}\ i.\}
\end{equation*}

Then $(\wD{\leq a}, \wD{>a})$ defines a $t$-structure on $D^b\MHM(X)$, and we denote $w_{\leq a}$ and $w_{> a}$ to be the truncation functors. The following lemma is clear from the definition, see also \cite[Proposition 3.1.3]{morel2008complexes}.

\begin{Lemma}
    Let $M\in D^b\MHM(X)$. Then for any $i\in\Z$, applying $H^i$ to the weight truncation triangle gives a short exact 
    sequence
    \begin{equation*}
        0\to H^i(w_{\leq a}K)\to H^i(K)\to H^i(w_{>a}(K))\to 0 .
    \end{equation*}
    Moreover, $H^i(w_{\leq a}K)\cong w_{\leq a}H^i(K)$ and $H^i(w_{>a}K)\cong H^i(w_{>a}K)$.
\end{Lemma}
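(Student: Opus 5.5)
The plan is to derive everything from two facts about mixed Hodge modules recalled in the previous subsection: the weight filtration $W$ on $\MHM(X)$ is functorial, and the functors $W_a$ and $Gr^W_i$ are exact. First, for a single mixed Hodge module $N$, there is the canonical short exact sequence $0\to W_aN\to N\to N/W_aN\to 0$. Here $W_aN$ has weights $\leq a$ and $N/W_aN$ has weights $>a$. Moreover, $\Hom(W_aN',N'')=0$ whenever $N'$ has weights $\leq a$ and $N''$ has weights $>a$. This holds because a morphism is strictly compatible with $W$ (a consequence of exactness of $Gr^W$), so its image has weights both $\leq a$ and $>a$, hence vanishes. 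Second, I will show that $(\wD{\leq a},\wD{>a})$ is a $t$-structure on $D^b\MHM(X)$, with the hearts of both halves stable under the cohomology functors $H^i$ by definition. For this, I will check that $\Hom(A,B)=0$ for $A\in\wD{\leq a}$ and $B\in\wD{>a}$, and that every $K$ fits in a triangle $w_{\leq a}K\to K\to w_{>a}K\to$. This is essentially \cite[Proposition 3.1.3]{morel2008complexes}, and I would cite it rather than reprove it.

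Given the $t$-structure, the lemma follows by taking the long exact sequence of $H^i$ attached to the triangle $w_{\leq a}K\to K\to w_{>a}K\xrightarrow{+1}$:
\begin{equation*}
\cdots\to H^{i-1}(w_{>a}K)\xrightarrow{\delta} H^i(w_{\leq a}K)\to H^i(K)\to H^i(w_{>a}K)\xrightarrow{\delta} H^{i+1}(w_{\leq a}K)\to\cdots
\end{equation*}
The connecting morphisms $\delta$ go from a mixed Hodge module of weights $>a$ to one of weights $\leq a$, so they vanish by the Hom-vanishing from the first step. This splits the long exact sequence into the claimed short exact sequences. Next, $H^i(w_{\leq a}K)$ is a subobject of $H^i(K)$ with weights $\leq a$, and its cokernel has weights $>a$. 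Since $W_a$ is exact and the first term is pure in the range $\leq a$ while the third lies in the range $>a$, applying $W_a$ identifies $H^i(w_{\leq a}K)=W_aH^i(K)=w_{\leq a}H^i(K)$. The quotient statement $H^i(w_{>a}K)\cong H^i(K)/W_aH^i(K)=w_{>a}H^i(K)$ follows in the same way; the paper's second isomorphism is presumably a typo for this.

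The main obstacle is the existence of the truncation triangle, i.e.\ the $t$-structure itself, since objects of $D^b\MHM(X)$ are not formal sums of their cohomologies. I would build the triangle by induction on the cohomological amplitude of $K$, using the canonical (ordinary) truncation triangles $\tau_{<n}K\to K\to H^n(K)[-n]$. One splits $H^n(K)$ via its weight sequence and glues with the octahedral axiom. The needed vanishing $\Hom(A,B[1])=0$ for modules $A$ of weights $\leq a$ and $B$ of weights $>a$ will fail in general (extensions exist), so I need to be careful about the order of gluing. The argument should go by induction: the vanishing $\Hom(A,B)=0$ for complexes in the two halves reduces, via a dévissage on cohomology, to $\Hom(A',B'[k])$ with $k\le 0$, which vanishes for $k<0$ trivially and for $k=0$ by strictness. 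This is exactly where I would defer to Morel's argument.
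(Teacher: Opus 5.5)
Your argument is correct and is essentially the paper's. The paper treats the lemma as immediate from the definition of the $t$-structure (pointing to \cite[Proposition 3.1.3]{morel2008complexes}), and your long exact sequence, with the connecting maps killed by strictness, is exactly that unpacking. The strictness argument is symmetric, so it does cover $\delta$, even though $\delta$ goes from weights $>a$ to weights $\leq a$, the opposite of the direction you stated. You are also right that the last isomorphism should read $H^i(w_{>a}K)\cong w_{>a}H^i(K)$.

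Your final paragraph is not load-bearing, but it is wrong on two points.

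\textbf{The Ext groups do vanish.} $\Hom(A,B[k])=0$ for \emph{all} $k\geq 0$ when $A$ has weights $\leq a$ and $B$ has weights $>a$. Since $W_a$ is exact, it passes to $D^b\MHM(X)=D^b(\MHM(X))$. Naturality of $W_a\hookrightarrow\mathrm{id}$, together with $W_aA=A$, then shows that every $\phi\colon A\to B[k]$ factors through $W_a(B)[k]=0$. For $k=1$ this is concrete: an extension $0\to B\to E\to A\to 0$ is split by $W_aE\xrightarrow{\sim}A$.

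\textbf{The vanishing for $k>0$ is genuinely needed.} The d\'evissage does not reduce to $k\leq 0$, because both ${}^wD^{\leq a}$ and ${}^wD^{>a}$ are stable under all shifts. So $\Hom(A[-1],B)=\mathrm{Ext}^1(A,B)$ must vanish. If such extensions existed, the $t$-structure you cite could not exist, so your aside contradicts the very result you rely on.

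The same observation removes your ``main obstacle''. Take $w_{\leq a}K$ to be $W_a$ applied termwise to a complex representing $K$. The triangle comes from $0\to W_aK\to K\to K/W_aK\to 0$, and $H^i(W_aK)=W_aH^i(K)$ by exactness. The map $H^i(W_aK)\to H^i(K)$ is visibly injective. This yields the whole lemma directly, with no induction on amplitude and no octahedral gluing; the orthogonality above guarantees that this triangle is the truncation triangle.
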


The following lemma then characterize the weight truncation for smooth modules.

\begin{Lemma}\label{weight truncation for smooth MHM}
    Let $X$ be a smooth variety, and $M\in D^b\MHM(X)$ be smooth. Then 
    \begin{itemize}
        \item [(i.)] $w_{\leq a}M, w_{>a}M$ are both smooth, hence the weight truncations define a t-structure on the full stable subcategory of smooth mixed Hodge modules;
        \item [(ii.)] $M\in {}^wD^{\leq a}(X)$ if and only if, for any closed point $i_x:x\hookrightarrow X$, $H^i(i_x^*M)$ has weights $a-\dim(X)$ for all $i$.
    \end{itemize}
\end{Lemma}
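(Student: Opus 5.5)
The plan is to reduce both parts to statements about the cohomology objects $H^j(M)$, using the fact that the smooth modules form a Serre subcategory of $\MHM(X)$, as noted after \Cref{smooth MHM}.

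For (i.), I would use the preceding lemma, which gives a short exact sequence
\begin{equation*}
0\to H^i(w_{\leq a}M)\to H^i(M)\to H^i(w_{>a}M)\to 0 .
\end{equation*}
Since $M$ is smooth, $H^i(M)$ is a smooth mixed Hodge module. The Serre property then says its subobject $H^i(w_{\leq a}M)$ and its quotient $H^i(w_{>a}M)$ are smooth. This holds for every $i$, so $w_{\leq a}M$ and $w_{>a}M$ are smooth in the sense of \Cref{smooth MHM}. The smooth objects form a full triangulated subcategory, closed under shifts and cones by the long exact sequence and the Serre property. The truncation triangle of any smooth $M$ stays in this subcategory, so the $t$-structure restricts to it.

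For (ii.), I read the condition as ``weights $\leq a-\dim(X)$''. Write $d=\dim(X)$ and fix a closed point $x$. By \Cref{smooth stalk}, $H^i(i_x^*M)\cong i_x^*H^{i+d}(M)[-d]$. So it suffices to show the following: for a single smooth mixed Hodge module $N$ on $X$, $N$ has weights $\leq a$ if and only if $i_x^*N[-d]\in \MHM(pt)$ has weights $\leq a-d$ for every closed point $x$.

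\emph{Forward direction.} Suppose $N$ has weights $\leq a$. By \Cref{Functors compatible with weights}, $i_x^*N$ has weights $\leq a$ as a complex. Since $N$ is smooth, $i_x^*N$ is concentrated in degree $-d$, where it equals $i_x^*N[-d]$. The convention ``$Gr^W_k H^j=0$ for $k>j+a$'' with $j=-d$ gives weights $\leq a-d$.

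\emph{Converse.} Apply the exact functor $Gr^W_k$ with $k>a$. The module $Gr^W_kN$ is again smooth, being a subquotient of $N$. Its stalk $i_x^*(Gr^W_kN)[-d]$ is a subquotient of $i_x^*N[-d]$, using exactness of $i_x^*[-d]$ on smooth modules, which follows from \Cref{smooth stalk}. I would then show this stalk is pure of weight $k-d$. Granting that, the hypothesis that $i_x^*N[-d]$ has weights $\leq a-d$, together with $k-d>a-d$, forces the stalk to vanish for all $x$. A smooth module with zero stalks has zero underlying local system, so $Gr^W_kN=0$ by faithfulness of $\rat$.

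\textbf{Main obstacle.} The hardest step is the purity claim: the stalk of a smooth pure module of weight $k$ is pure of weight $k-d$. Here I would invoke Saito's theorem that smooth pure Hodge modules of weight $k$ are exactly polarizable VHS of weight $k-d$, placed in perverse degree $-d$ (\cite[2.2]{saito1989introduction}). The fiber at $x$ of such a VHS is a pure Hodge structure of weight $k-d$, and this fiber is the stalk $i_x^*(Gr^W_kN)[-d]$, which gives the claim.
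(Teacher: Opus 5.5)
Your proof is correct. Part (i) is the paper's argument verbatim. In (ii) you make the same reduction through \Cref{smooth stalk}, and reading the condition as weights $\leq a-\dim(X)$ is the intended meaning.

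Where you diverge is the key input for the single-module statement. The paper just cites Saito's pointwise criterion \cite[(4.6.1)]{saito1990mixed}, which holds for arbitrary objects of $D^b\MHM(X)$, and applies it to each $H^i(M)$. You instead prove that criterion by hand in the smooth case.

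Your forward direction is \Cref{Functors compatible with weights}, which is the easy half of Saito's criterion. Your converse replaces the hard half: you pass to $Gr^W_kN$, use that it is again smooth, use exactness of $i_x^*[-\dim(X)]$ on smooth modules, and use purity of the stalks. That converse genuinely uses smoothness, so it would not yield the general criterion.

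What you gain is self-containedness: you avoid a theorem whose proof must handle non-smooth modules. What it costs is one extra fact that you should state and cite explicitly. You need that, under Saito's equivalence between smooth mixed Hodge modules and admissible variations, $i_x^*(Gr^W_kN)[-\dim(X)]$ is the fiber at $x$ of the corresponding polarizable variation, together with its Hodge structure. In other words, the equivalence must be compatible with pullback to points. This is standard in Saito's theory, but it is a separate compatibility from the equivalence of categories recorded in \cite[2.2]{saito1989introduction}. The paper's route never needs it.
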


\begin{proof}
(i.) follows immediately by the previous lemma, and smooth mixed Hodge modules are closed under subobjects and quotients. For (ii.), by pointwise criterion \cite[(4.6.1)]{saito1990mixed}, $H^i(M)$ has weights $\leq a$ if and only if $i_x^*H^i(M)$ has weights $\leq a$ for any $x$. The result then follows by \Cref{smooth stalk}. 
\end{proof}

We warn that an object $M$ in $\wD{\leq a}(X)$ does not agree with $M$ has weights $\leq a$ in our previous definition, which requires that $H^i(M)$ has weights $\leq i+a$.

More generally, let $X=\coprod^r
_{j=0} X_j$ be a stratified complex variety such that $X_k$ is open in $X-\coprod_{j=0}^{k-1}X_j$. Let $\aa=(a_0,...,a_r)\in \Z^{r+1}$, one obtains a $t$-structure $(\wD{\leq \aa}(X),\wD{>\aa}(X))$ by gluing $t$-structures $(\wD{\leq a_j}(X_j),\wD{>a_j}(X_j))$ (\cite[1.4]{beilinson2018faisceaux}). More explicitly, an object $M$ in $\wD{\leq \aa}(X)$ (resp. in $\wD{>\aa}(X)$) if and only if $i_j^*M\in \wD{\leq a_j}(X_j)$ (resp. $i^!_jM\in \wD{>a_j}(X_j)$) for each $i_j:X_j\hookrightarrow X$.

In particular, there exist functors $w_{\leq \aa}$ and $w_{>\aa}$ on $D^b\MHM(X)$, such that for any object $M$, $w_{\leq \aa}M \in \wD{\leq\aa}(X)$ and $w_{>\aa}M\in \wD{>\aa}(X)$ with an exact triangle
\begin{equation*}
    w_{\leq \aa}M\rightarrow M\rightarrow w_{> \aa}M.
\end{equation*}
Clearly, $w_{\leq \aa}\circ w_{\leq \underline{b}}=w_{\leq \aa}$ for $\aa\leq \underline{b}$, that is, $a_j\leq b_j$ for all $j=0,...r$. We denote a constant tuple $(a,...a)$ simply by $a$. Let $\E$ be an admissible variation of Hodge structures of pure weight $w$ on some open dense smooth subvariety $j:U\hookrightarrow X$ with $j_*\E$ and $j_!\E$ are constructible. According to \cite[Theorem 3.1.4]{morel2008complexes}, we have
\begin{Proposition}\label{Morel Intersection} 
   \begin{equation*}
       IC_X(\E)=w_{\leq d_X+w}j_*\E.
   \end{equation*} 
\end{Proposition}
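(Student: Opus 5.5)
The plan is to work with the shifted perverse objects $IC_X(\E^H)[d_X]$ and $j_*\E^H[d_X]$, and to check that $IC_X(\E^H)[d_X]$ satisfies the defining property of the truncation $w_{\le d_X+w}$ of $j_*\E^H[d_X]$. The shifts are only bookkeeping: the $t$-structure $({}^wD^{\le a},{}^wD^{>a})$ is defined by the weights of all perverse cohomology objects $H^i$. A shift changes indices but not those weights, so $w_{\le a}$ commutes with $[d_X]$. Throughout, set $a=d_X+w$.

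We need an exact triangle $IC\to j_*\E^H[d_X]\to C$ with $IC\in{}^wD^{\le a}(X)$ and $C\in{}^wD^{>a}(X)$. Uniqueness of truncation triangles then identifies $IC$ with $w_{\le a}j_*\E^H[d_X]$.

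First I would show that $IC_X(\E^H)[d_X]$ lies in ${}^wD^{\le a}$. It is a single Hodge module, pure of weight $a$, so its only cohomology object has weights $\le a$.

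Next consider $j_*\E^H[d_X]$. Its restriction to $U$ is pure of weight $a$. Since $j_*$ preserves weights $\ge a$ by \Cref{Functors compatible with weights}, it has weights $\ge a$ in the sense $Gr^W_iH^k=0$ for $i<k+a$.

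I would then reduce to the case where $j$ is affine. The definition of the minimal extension is independent of $U$, so we may shrink $U$ to an affine open dense subset, which makes $j$ an affine open immersion. Then $j_*$ is $t$-exact for the perverse $t$-structure, so $j_*\E^H[d_X]$ is a single mixed Hodge module $M=H^0$ with weights $\ge a$. By the theory of minimal extensions, $IC_X(\E^H)[d_X]$ is the image of $j_!\to j_*$. This image equals $W_aM$: it is the unique subobject of $M$ restricting to $\E^H[d_X]$ on $U$ that has no nonzero quotient supported on $X\setminus U$, which is the standard description of $W_a j_*$ (Saito). So $M/W_aM$ has weights $>a$, and
\begin{equation*}
0\to IC_X(\E^H)[d_X]\to M\to M/W_aM\to 0
\end{equation*}
gives the required triangle. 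Without the affine reduction, $j_*$ has several perverse cohomology objects, and the higher ones (supported on the boundary) have weights $\ge k+a>a$ for $k\ge1$. These then sit in ${}^wD^{>a}$ automatically, and the same argument works degreewise via the short exact sequences of the earlier Lemma.

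The main obstacle is the identification $\mathrm{im}(j_!\to j_*)=W_a j_*$, i.e.\ that $Gr^W_a j_*\E^H[d_X]$ carries no summand supported on the boundary. I would argue this through purity and semisimplicity of $Gr^W_a$. A boundary-supported summand $N$ of $Gr^W_a M$ would give a nonzero map into $M$ from something supported on $X\setminus U$, i.e.\ $i^!M\neq0$ in perverse degree $0$. This contradicts $i^!j_*=0$. Hence $W_aM$ is an extension of $IC$ by lower weight pieces. But $M$ has weights $\ge a$, so there are no lower weight pieces, and $W_aM=Gr^W_aM$ equals the intermediate extension, as needed.
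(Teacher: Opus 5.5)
The paper gives no argument for this statement; it only quotes Morel's Theorem 3.1.4. Your proof is a valid independent argument, but two points need repair.

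\textbf{The affine reduction.} As written it is not justified. Independence of $IC_X(\E)$ from $U$ says nothing about the right-hand side: shrinking $U$ along $k\colon U'\hookrightarrow U$ replaces $j_*\E$ by $j_*k_*k^*\E$. Showing that these two objects have the same $w_{\le a}$ needs an argument of the same kind as the one you are giving. Simply drop the reduction, since your non-affine version already works.

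Take $M={}^pH^0(j_*\E^H[d_X])$. The composite $W_aM\hookrightarrow M={}^p\tau_{\le 0}j_*\E^H[d_X]\to j_*\E^H[d_X]$ has a cone with $H^k=0$ for $k<0$ and $H^0=M/W_aM$. For $k\ge 1$ its $H^k$ is $H^k(j_*\E^H[d_X])$. All of these have weights $>a$.

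In this setting the vanishing you need is $\Hom(i_*N,M)=\Hom(i_*N,j_*\E^H[d_X])=\Hom(N,i^!j_*\E^H[d_X])=0$. This uses $j_*\E^H[d_X]\in{}^pD^{\ge 0}$, rather than literally $i^!M=0$.

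\textbf{The final paragraph.} Replace the sentence about ``an extension of $IC$ by lower weight pieces'' with the clean conclusion:
$W_aM=Gr^W_aM$ is pure, hence semisimple. It restricts to $\E^H[d_X]$ on $U$ and has no simple summand supported on $X\setminus U$. A semisimple perverse object with these properties is the intermediate extension of its restriction.

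\textbf{Comparison with the cited result.} Your route identifies $w_{\le a}j_*\E^H[d_X]$ concretely as the single Hodge module $W_a\,{}^pH^0(j_*\E^H[d_X])$. It also recovers purity of the intermediate extension along the way. The price is Saito's semisimplicity of polarizable pure Hodge modules.

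The argument behind the cited result is shorter and needs no semisimplicity. Set $L=IC_X(\E^H)[d_X]$ and use the triangle $L\to j_*\E^H[d_X]\to i_*i^!L[1]$.
Since $L$ is pure of weight $a$, it lies in ${}^wD^{\le a}$.
Moreover $i^!L$ has weights $\ge a$ by \Cref{Functors compatible with weights}, and lies in ${}^pD^{\ge 1}$ by the support condition characterizing $IC$.
Hence $H^k(i^!L[1])$ vanishes for $k<0$ and has weights $\ge a+k+1>a$ for $k\ge 0$, so the cone lies in ${}^wD^{>a}$.

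This version is formal once purity of $IC$ and the weight estimates for $i^!$ are known. It therefore transfers verbatim to mixed $\ell$-adic complexes over finite fields, where pure perverse sheaves are only geometrically semisimple. Dualized, it also gives $IC=w_{\ge a}j_!$.
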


We shall also use the weight truncation for $\d:=(\dim(X_0),...\dim(X_r))$. When $\E=\Q$, we denote by
\begin{equation*}
    EC_X:=w_{\leq \underline{\dim}}j_*\Q_U=w_{\underline{\dim}}IC_X.
\end{equation*}
It is called weightless cohomology/motive in \cite{nair2015weightless} and \cite{vaish2016motivic}, and is called relative Artin motive in \cite{ayoub2012relative}. It is shown in \cite[Corollary 4.3.7]{vaish2016motivic} that they are equivalent in characteristic $0$.  This is an object which behaves like a slight modification of $\Q_X$, but unlike $IC_X$, it admits a ring structure, and enjoys much better functorial behavior. We summarize the basic properties that will be needed. 

\begin{Proposition}\label{Properties of EC}
    \begin{itemize}
        \item [(i.)] $EC_X$ is a ring object and there is a natural ring homomorphism $\Q_X\to EC_X$;

        \item [(ii.)] $EC_{X\times Y}\cong EC_{X}\boxtimes EC_Y$;

        \item [(iii.)] If $f:Y\to X$ is a morphism of varieties such that the closed image of each irreducible component of $Y$ meets the smooth locus of $X$, then there exists a natural pullback $f^{\sharp}:EC_X\to f_*EC_Y$ of ring objects, such that the diagram
        \begin{equation*}
\begin{tikzcd}
\Q_X \arrow[d] \arrow[r] & EC_X \arrow[d, "f^{\sharp}"'] \arrow[r] & j_{X*}\Q_{X_{reg}} \arrow[d] \\
f_*\Q_Y \arrow[r]        & f_*EC_Y \arrow[r]                       & f_*j_{Y*}\Q_{Y_{reg}}       
\end{tikzcd}
        \end{equation*}
        commutes. Here $j_X:X_{reg}\hookrightarrow X$ is the immersion from the smooth locus.
    \end{itemize} 
\end{Proposition}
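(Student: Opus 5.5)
The plan is to derive all three properties from the $t$-structure formalism of weight truncations and the adjunctions it gives, and to use the identification $EC_X = w_{\leq \d} j_*\Q_U$ together with the universal property of the truncation. First I check that $\Q_X \in \wD{\leq \d}(X)$. On each stratum $X_j$, which we may take smooth, $i_j^*\Q_X = \Q_{X_j}$. Viewed as a complex of mixed Hodge modules this is $\Q^H_{X_j}[d_j][-d_j]$, whose only nonzero cohomology is pure of weight $d_j$. So $i_j^*\Q_X\in \wD{\leq d_j}(X_j)$; alternatively this follows from the stalk criterion of \Cref{weight truncation for smooth MHM}(ii). Hence $\Q_X\to j_*\Q_U$ factors uniquely through $w_{\leq \d}j_*\Q_U$. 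This gives the natural map $\Q_X\to EC_X$, and it is compatible with $EC_X\to j_*\Q_U$.

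For the ring structure in (i), I use that $EC_X\otimes EC_X$ lies in $\wD{\leq \d}(X)$. On strata, $i_j^*$ commutes with $\otimes$, and a tensor product of two smooth complexes with stalks of weight $\leq 0$ again has stalks of weight $\leq 0$; after normalizing by the shift $d_j$, this is the statement for stalk weights. The multiplication $j_*\Q_U\otimes j_*\Q_U\to j_*\Q_U$ restricted to $EC_X\otimes EC_X$ then factors uniquely through $EC_X$, because $\Hom(A, w_{\leq\d}B)=\Hom(A,B)$ for $A\in\wD{\leq\d}$. Associativity, unit and commutativity follow from the same uniqueness. For (ii), I use the stratification of $X\times Y$ by the products $X_i\times Y_j$, of dimension $d_i+e_j$. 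The product $EC_X\boxtimes EC_Y$ restricts on strata to products of smooth complexes with bounded stalk weights, so it lies in $\wD{\leq \d}$. To see it is the truncation of $(j_X\times j_Y)_*\Q = j_{X*}\Q\boxtimes j_{Y*}\Q$ (Künneth), I show that the cone $C$ lies in $\wD{>\d}$. The cone is built from $w_{>}\boxtimes(\cdot)$ pieces, and I need their $i^!$ restrictions to strata to have weights $>$. This is the step I expect to be the main obstacle. The issue is that $\boxtimes$ is only well behaved on $i^!$ for product strata, with $i^!(A\boxtimes B)=i^!A\boxtimes i^!B$. I would argue that $w_{>}$ parts have stalk-costalk weights $>$, while the $EC$ parts satisfy $i^!EC\in \wD{\geq d}$ because $EC$ is a truncation of $j_*$. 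This last point needs care, and I would reduce it to Morel's computation of $i^!$ of weight truncations of $j_*$.

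For (iii), I would first construct $EC_X\to f_*j_{Y*}\Q$ as the composite $EC_X\to j_{X*}\Q\to f_*j_{Y*}\Q$. The second arrow needs the hypothesis: each component of $Y$ meets $f^{-1}(X_{reg})$ densely. Using $f^*$ adjunction, it then suffices to show that $f^*EC_X\to j_{Y*}\Q$ factors through $EC_Y$, i.e.\ that $f^*EC_X\in\wD{\leq\d}(Y)$ after refining the stratification so that $f$ maps strata to strata. Pullback preserves the bound on stalk weights by \Cref{Functors compatible with weights}. Since stalk weights of $EC_X$ are $\leq 0$ in the normalized sense, the same holds for $f^*EC_X$, which gives the required membership in $\wD{\leq \d}(Y)$. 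This yields $f^\sharp$, and the commutativity of the diagram and multiplicativity follow from the uniqueness of factorizations through truncations.
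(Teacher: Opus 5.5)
The gap is in (iii). You build $f^\sharp$ from the composite $EC_X\to j_{X*}\Q\to f_*j_{Y*}\Q$. But an arrow $j_{X*}\Q_{X_{reg}}\to f_*j_{Y*}\Q_{Y_{reg}}$ that agrees with the obvious one over $X_{reg}$ does not exist in general, even under the hypothesis. The problem is that $f$ may send points of $Y_{reg}$ into the singular locus of $X$, where the stalks of $j_{X*}\Q$ carry link cohomology that need not restrict compatibly.

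For instance, let $X=\{xy=zw\}\subseteq\C^4$ and $Y=\{x=z=0\}\cong\C^2$. Then $Y$ is smooth, meets $X_{reg}=X\setminus\{0\}$, and $j_{Y*}\Q_{Y_{reg}}=\Q_Y$. Commutativity of the outer rectangle forces the adjoint map $f^*j_{X*}\Q_{X_{reg}}\to\Q_Y$ to be the identity over $Y\setminus\{0\}$. At $0$ the source has stalk $H^*(L)$, where $L\cong S^2\times S^3$ is the link of $X$. Its natural map to the stalk of $u_*\Q_{Y\setminus\{0\}}$ (for $u\colon Y\setminus\{0\}\hookrightarrow Y$) is the restriction $H^*(L)\to H^*(S^3)$ to the link of $Y$. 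This restriction is nonzero in degree $3$. Indeed, $S^3$ is the preimage of a ruling $\ell\subseteq\P^1\times\P^1$ under the circle bundle $L\to\P^1\times\P^1$. Since $[\ell]$ is not a multiple of the hyperplane class, the Gysin sequence gives $[S^3]\neq 0$ in $H_3(L;\Q)$. Naturality of $\mathrm{id}\to u_*u^*$ would force this restriction to factor through $(\Q_Y)_0=\Q$, which has no $H^3$. Note that here $EC_X=\Q_X$, because $H^{>0}(L)$ has positive weights, so $f^\sharp$ does exist; only the $j_*$-level arrow fails, which is exactly why one passes to $EC$.

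The missing idea is to work with $Y_0:=Y_{reg}\cap f^{-1}(X_{reg})$, which is dense in $Y$ by the hypothesis, and to show that $EC_Y$ does not depend on the dense open.
\begin{itemize}
\item The natural map is $j_{X*}\Q_{X_{reg}}\to j_{X*}f_{0*}\Q_{Y_0}=f_*j_{Y_0*}\Q_{Y_0}$, with $f_0=f|_{Y_0}$.
\item Your stalk argument gives $f^*EC_X\in{}^wD^{\leq\d}(Y)$. Hence the adjoint $f^*EC_X\to j_{Y_0*}\Q_{Y_0}$ factors uniquely through $w_{\leq\d}j_{Y_0*}\Q_{Y_0}$.
\item To identify this with $EC_Y$, let $k\colon Y_0\hookrightarrow Y_{reg}$. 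The cone of $\Q_{Y_{reg}}\to k_*\Q_{Y_0}$ has $i^!$-restriction $\Q_Z(-c)[1-2c]$ to a smooth stratum $Z$ of codimension $c\geq 1$, by purity. Its normalized weight is $2c>0$, so the cone lies in ${}^wD^{>\d}$.
\item Since $j_{Y*}$ preserves ${}^wD^{>\d}$ and that class is shift-stable, applying $j_{Y*}$ yields $w_{\leq\d}j_{Y*}\Q_{Y_{reg}}\cong w_{\leq\d}j_{Y_0*}\Q_{Y_0}$.
\end{itemize}
Multiplicativity and commutativity then follow from uniqueness as you say, but with the right-hand column read through $f_*j_{Y_0*}\Q_{Y_0}$. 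It is an honest arrow into $f_*j_{Y*}\Q_{Y_{reg}}$ only when $f(Y_{reg})\subseteq X_{reg}$.

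Parts (i) and (ii) are fine. The paper itself only cites Vaish, so your route via the universal property of $w_{\leq\d}$ and stratum-wise stalk and costalk weights is self-contained. The step you flag in (ii) is in fact immediate. For a non-open stratum $i\colon X_i\hookrightarrow X$ one has $i^!j_{X*}=0$, so the truncation triangle gives $i^!EC_X\cong i^!w_{>\d}j_{X*}\Q[-1]\in{}^wD^{>d_i}(X_i)$, while $i^!EC_X=\Q_U$ on the open stratum. Hence all normalized costalk weights of $EC_X$ are $\geq 0$, and K\"unneth for $i^!$ on product strata finishes the argument.
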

\begin{proof}
    We refer to \cite[Theorem 3.3.10]{vaish2016motivic} for a summarization.
\end{proof}

For general $\E$ of weight $w$ on $U$, we have the following property of the weightless cohomology. This further justifies it as a modification of the ordinary cohomology of a projective variety. The proof is identical to \cite[Lemma 4.3.1]{nair2015weightless}, up to a uniform shift of weights by $w$.

\begin{Proposition}\label{weightless and intersection}
    When $X$ is proper, the hypercohomology $EH^k(X,\E):=\mathbb{H}^k(X,w_{\leq\d+w}j_*\E)$ has Hodge weights $\leq k+w$, and the canonical homomorphism $w_{\leq\d+w}j_*\E\to IC_X(\E)$
    induces an identification of the top weight quotient as the image in the intersection cohomology
    \begin{equation*}
        Gr^W_{k+w}EH^k(X,\E)\cong\im(EH^k(X,\E)\to IH^k(X,\E))\hookrightarrow IH^k(X,\E).
    \end{equation*}
\end{Proposition}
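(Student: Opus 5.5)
Write $K:=w_{\leq\d+w}j_*\E$, and normalize shifts so that $j_*\E$ restricted to $U$ is $\E^H$, of weight $w+d_X$ there. The plan is to compare $K$ with $IC_X(\E)=w_{\leq d_X+w}j_*\E$ (\Cref{Morel Intersection}) through the cone of the natural map, and then push everything to a point using \Cref{Functors compatible with weights}.

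\textbf{Weights of $K$.} First I show that $K$ is mixed of weights $\leq w$ in the sense of \S\ref{subsection: Generalities on Hodge Theory}, i.e. $Gr^W_iH^j(K)=0$ for $i>j+w$. I would argue stratum by stratum. By the gluing description, $i_k^*K\in\wD{\leq \dim X_k+w}(X_k)$. After refining the stratification we may assume each $i_k^*K$ is smooth on the smooth stratum $X_k$. Then \Cref{weight truncation for smooth MHM}(ii) says the stalk cohomology $H^i(i_x^*K)$ has weights $\leq w$ for every $x\in X_k$. By the pointwise criterion, stalks of weight $\leq w$ in every degree give weight $\leq i+w$ in degree $i$. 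Hence $K$ has weights $\leq w$ in the derived sense, once the shift conventions are set up consistently; matching the shift convention ($IC$ versus $IC[d_X]$) against this derived-sense weight bound is exactly the point where the argument needs care. Since $X$ is proper, $a_{X*}=a_{X!}$ preserves weights $\leq$, so $EH^k(X,\E)=H^k(a_{X*}K)$ has weights $\leq k+w$.

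\textbf{Top weight quotient is the image.} Since $\d+w\leq d_X+w$ componentwise, the composition rule gives $K=w_{\leq\d+w}IC_X(\E)$. We therefore have a triangle
\begin{equation*}
K\to IC_X(\E)\to C:=w_{>\d+w}IC_X(\E).
\end{equation*}
The aim is to show that $C$ has weights $\geq w+1$ in the derived sense. On the open stratum $C$ vanishes. On a boundary stratum, $i_k^!C\in\wD{>\dim X_k+w}$, and since $i_k^!C$ is smooth, the smooth-stalk argument from the previous step (now for $!$-restrictions) gives that $i_k^!C$ has weights $\geq w+1$. A filtration of $C$ by the $i_{k*}i_k^!C$ pieces, together with the fact that $i_{k*}$ preserves $\geq$, then shows $C$ has weights $>w$. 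As $a_{X*}$ preserves weights $\geq$, the group $H^{k-1}(X,C)$ has weights $\geq k+w$.

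Take the long exact sequence
\begin{equation*}
H^{k-1}(X,C)\to EH^k(X,\E)\to IH^k(X,\E)\to\cdots .
\end{equation*}
The image of $H^{k-1}(X,C)$ in $EH^k(X,\E)$ has weights both $\geq k+w$ and $\leq k+w$, and therefore lies in $W_{k+w}$. The key reverse statement is that the kernel of $EH^k(X,\E)\to IH^k(X,\E)$ is exactly $W_{k+w-1}EH^k(X,\E)$. Here $IH^k(X,\E)$ is pure of weight $k+w$, so strictness of morphisms of mixed Hodge structures forces $W_{k+w-1}$ to map to zero.

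\textbf{Main obstacle.} The hard part is the opposite inclusion: that the kernel is contained in $W_{k+w-1}$. This requires the image of $H^{k-1}(X,C)$ to have weights $\leq k+w-1$. From the weight bounds alone we only know that this image is a quotient of something of weight $\geq k+w$ and a sub of something of weight $\leq k+w$, which is not enough. I would first try the sharper claim that $C$ has weights $\geq w+2$, i.e. that on boundary strata the truncation lands one weight higher. If that fails, I would follow Nair's argument in \cite[Lemma 4.3.1]{nair2015weightless} verbatim, carrying the shift by $w$ throughout. That argument reduces the comparison of $Gr^W_{k+w}$ with the image to the corresponding statement for $IC$ via the triangle $K\to IC_X(\E)\to C$ and the induced weight filtration.
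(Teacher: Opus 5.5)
Your first step is essentially right, given the caveat you flag: the stalks of $K$ sit in degrees $\geq 0$, so the uniform pointwise bound $\leq w$ gives $\leq j+w$ in degree $j$. The inclusion $W_{k+w-1}EH^k\subseteq\ker(EH^k\to IH^k)$, from purity of $IH^k$ and strictness, is also right. The gap is the reverse inclusion, and your strategy for it points the wrong way.

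First, $C=w_{>\d+w}IC_X(\E)$ does not have weights $\geq w+1$ in general. The glued $t$-structure only says that the stalks of $i_k^!C$ have weights $>w$ in every degree. That degree-independent bound does not give the derived condition that $H^j(i_x^!C)$ has weights $\geq j+w+1$ once $i_k^!C$ has cohomology in positive degrees. Concretely, take the projective cone over an elliptic curve $E$ with $\E=\Q$. At the vertex $x$, the stalk of $j_*\Q$ is the cohomology of the link, with $H^0=\Q$ of weight $0$ and $H^1=H^1(E)$ of weight $1$. $IC_X$ keeps both, while $K$ keeps only $H^0$, so $C=i_{x*}H^1(E)[-1]$, which is pure of weight $0$.

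Second, no lower bound on the weights of $C$ can give $\ker\subseteq W_{k+w-1}$; at best it kills the image of $H^{k-1}(X,C)$. The bound $\geq w+2$ you suggest would make $EH^k(X,\E)\to IH^k(X,\E)$ injective, i.e. $EH^k$ pure, which is false in general. Falling back on Nair--Vaish ``verbatim'' does not close the gap, since the paper's proof is exactly that citation; the argument itself is what has to be supplied.

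What is needed is an \emph{upper} bound: $C$ has weights $\leq w$ in the derived sense. This is the fact recalled in \Cref{algebraic proof?}. It follows from the pointwise criterion by the same asymmetry you used for $K$.
\begin{itemize}
\item For $x$ in a boundary stratum, $H^j(i_x^*K)$ has weights $\leq w$ for every $j$ and vanishes for $j<0$.
\item $H^j(i_x^*IC_X(\E))$ has weights $\leq j+w$.
\item The exact sequence $H^j(i_x^*IC_X(\E))\to H^j(i_x^*C)\to H^{j+1}(i_x^*K)$ then shows $H^j(i_x^*C)$ has weights $\leq\max(j+w,w)=j+w$ for $j\geq 0$.
\item $H^{-1}(i_x^*C)=0$, because $K$, $IC_X(\E)$ and $j_*\E$ all have the same $H^0$-stalks.
\end{itemize}
Since $a_{X*}=a_{X!}$ preserves weights $\leq$, $H^{k-1}(X,C)$ has weights $\leq k+w-1$. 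Its image, which is $\ker(EH^k\to IH^k)$, therefore lies in $W_{k+w-1}EH^k$. Together with the inclusion you already have, the kernel is exactly $W_{k+w-1}EH^k$, and the image is $EH^k/W_{k+w-1}=Gr^W_{k+w}EH^k$.
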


\subsection{Parabolic Subgroups and Weight Profiles}\label{subsection: Parabolic Subgroups and Weight Profiles}
This subsection is a summarization of some basic facts about parabolic subgroups of a reductive group, in order to fix notation. Let $G$ be a connected reductive algebraic group over $\Q$. For a rational parabolic subgroup $P\subseteq G$, let $S_P$ be the maximal $\Q$-split torus in the center of the Levi quotient $L_P=P/N_P$, where $N_P$ is the unipotent radical of $P$. If $P$ is conjugate to $P'$, then there is an isomorphism $S_P\cong S_{P'}$ that is independent of the choice of conjugation. Thus $S_P$ depends only on the conjugacy class of $P$. Moreover, if $P\subseteq Q$ then there is a natural embedding $S_Q\subseteq S_P$. Therefore, for any minimal parabolic subgroup $P_0$, $S_P$ is naturally identified as a subtorus of $S:=S_{P_0}$, \textit{the} maximal split torus. Henceforth we omit the subscript if $P=P_0$ unless the choice of $P_0$ is being emphasized.

For every (conjugacy class of) rational parabolic group $P$, There is a canonical finite set $\Delta_P\subseteq X(S_P):=\Hom(S_P,\G_m)$, \textit{the} set of simple roots in $\Phi(\n_P,S_P)$\footnote{It is not a root system when $P$ is not minimal.}. There is a one-to-one correspondence between the set of parabolic subgroups $Q$ containing $P$ and the set of subsets of $\Delta_P$: For each $Q$, one associates to a subset $\Delta^Q_P\subseteq \Delta_P$ consisting of roots restricting to $1$ on $S_Q$. It follows that 
\begin{equation*}
    S_Q=(\bigcap_{\chi\in \Delta^Q_P}\ker(\chi))^0.
\end{equation*}
Set $S^Q_P=(\bigcap_{\chi\in\Delta_Q\cup X(S_G)}\ker\chi\cap S_P)^0$, and then $S_P=S_QS^Q_P$. 

 Let $A_P:=S_P(\R)^0$ and $\a_P$ be its Lie algebra. Then $X(S_P)\otimes_\Z\R\cong\a_P^*$ and
 \begin{equation*}
     \a_P^*=\a_Q^*\oplus (\a^{Q}_P)^*.
 \end{equation*}
 In particular, for any parabolic subgroup $P$, there is an  isomorphism
\begin{equation*}
        (\a^{G}_P)^*\cong X(S^G_P)\otimes_\Z\R\xrightarrow{\sim} \R^{\Delta_P}.
\end{equation*}
  This defines a partial order on $(\a^{G}_P)^*$ induced by the product order on $\R^{\Delta_P}$. For any $P\leq Q$, and $\eta\in (\a^{G}_P)^*$, we define $\eta_Q\in (\a^{G}_Q)^*$ by restriction.

\begin{Definition}\label{weight profile}
    A \textit{weight profile} is an element $\eta\in (\a^{G})^*\cong \R^\Delta$. 
\end{Definition}

For any representation $E$ of $L_P$, and any weight profile $\eta$, we define the weight truncation functor as 
\begin{equation*}
    E_{\geq \eta}:=\bigoplus_{\chi\in X(S^G_P),\chi\geq\eta_P}E_{\chi},
\end{equation*}
by weight decomposition. Similarly, we could also define $E_{>\eta}$ and denote it by $E_{\geq \eta+\varepsilon}$. Here we could understand $\varepsilon>0$ as a sufficiently small weight profile, since all weights appearing in $E$ belongs to the lattice $X(S^G_P)$ in $(\a^G_P)^*$.

In this article, only $\eta=0$ and the half sum of all negative roots $\eta=-\rho$ will be considered. Moreover, we may always reduce to the standard assumption that $A_G$ is trivial (as in \cite[(3.5)]{goresky1994weighted}). This will save the notation, for example, we could write $\a_P^*$ in place of $(\a^G_P)^*$.

\subsection{Special Forms and Weighted Cohomology}\label{subsection: Special Forms and Weighted Cohomology}
We will follow \cite{goresky1994weighted} to discuss (reductive) Borel--Serre compactification, differential forms on them, and weighted cohomology. Let $G$ be a connected reductive algebraic group over $\Q$, and $G(\R)$ the real points of $G$, endowed with the real analytic Lie group topology. We will fix a maximal compact subgroup $K\subseteq G(\R)$.  Denote by $D$ the \textit{symmetric space} $G(\R)/KA_G$, and $\ti{D}$ the  Borel--Serre partial compactification. Let $\PP$ be the poset of parabolic subgroups of $G$, then $\ti{D}=\coprod_{P\in\PP}e_P$, where $e_P:=A_P\backslash D\cong D_{L_P}\times N_P(\R)$. 

Let $\Gamma\subseteq G(\Q)$ be an arithmetic group. We assume throughout that it is \textit{neat}, in the sense of \cite[17.1]{borel2019introduction}. Every arithmetic group contains a normal neat subgroup of finite index (\cite[17.4]{borel2019introduction}). Let $\Gamma\backslash\PP$ be the quotient poset, which is a finite set.  The \textit{locally symmetric space} $X:=\Gamma\backslash D=\Gamma\backslash G(\R)/KA_G$ is in general not compact, and the \textit{Borel--Serre compactification} $\BSX=\Gamma\backslash\ti{D}$ is a stratified space over $\Gamma\backslash \PP$ (i.e., $\BSX\to \Gamma\backslash\PP$ is continuous when the poset is equipped with the Alexandrov topology), such that for each $P\in\Gamma\backslash\PP$, the stratum is
\begin{equation*}
    Y_P:=\Gamma_P\backslash P(\R)/K_PA_P,
\end{equation*}
where $\Gamma_P=\Gamma\cap P(\R)$, and $K_P=K\cap P(\R)$. Let $\Gamma_{N_P}=\Gamma\cap N_P(\R)$, and then $\Gamma_{L_P}=\Gamma_P/\Gamma_{N_P}\subseteq L_P(\Q)$ is an arithmetic group. Let 
\begin{equation*}
    X_P:=\Gamma_{L_P}\backslash L_P(\R)/K_PA_P,
\end{equation*}
a locally symmetric space for $\Gamma_{L_P}$, which is a smooth manifold by our assumption that $\Gamma$ is neat. Then there is a flat fibration $Y_P\rightarrow X_P$ whose fibers are compact nilmanifolds $\Gamma_{N_P}\backslash N_P(\R)$. Let
\begin{equation*}
    \hX:=\coprod_{P\in \Gamma\backslash \PP}X_P, 
\end{equation*}
endowed with the quotient topology $p:\BSX\rightarrow \hX$. This is then a compact space stratified over $\Gamma\backslash\PP$, and is called the \textit{reductive Borel--Serre compactification} of $X$. Note that $Y_G=X_G=X$, and for any parabolic subgroup $P$, the closure of $X_P$ in $\hX$ is identified with the reductive Borel--Serre compactification $\hX_P$ of $X_P$. 

Let $P=Q_{1}\cap...\cap Q_{s}$ be a parabolic subgroup with $Q_i$ maximal parabolic. We recall that in \cite[(6.6)]{goresky1994weighted}, it is shown that there exists a "geodesic neighborhood" $\ti{U}_P$ of the closure $\ti{Y}_P$ in $\BSX$ with a diffeomorphism $\ti{U}_P\xrightarrow{\sim} \ti{Y}_P\times [0,2)^s$ such that $\ti{U}_P\to \ti{Y}_P$ is induced by the \textit{geodesic retraction} $r_P:\ti{e}_Q^*\to \ti{e}_P$ (\cite[(6.3.5)]{goresky1994weighted}). We shall denote by $U_P=\ti{U}_P\cap X$, and call it a \textit{geodesic neighborhood} of $Y_P$.\footnote{Literally, $U_P$ is not a neighborhood of $Y_P$, since they are disjoint in $\ti{X}$. On the other hand, all points in $X$ map to $Y_P$ via the geodesic retraction $r_P$.} In fact, geodesic neighborhoods are firstly defined for $\tilde{e}_P$ (\cite[(6.4)]{goresky1994weighted}), and we could similarly talk about geodesic neighborhoods of $e_P$ in $D$.

Let $E$ be a finite dimensional complex algebraic representation of $G$, then it associates to a local system $\E$ on $X$. Following \cite[(13.2)]{goresky1994weighted}, we say that a smooth differential $k$-form $\sigma$ on $X$ with values in $\E$ is \textit{special} if for each stratum $Y_P$ of $\BSX$, there exists a geodesic neighborhood $U_P$ of $Y_P$ such that the restriction of $\sigma$ on $U_P$ is the pullback of an $N_P$-invariant differential form $\sigma_P\in\Omega^k(Y_P,\E)^{N_P}$ on $Y_P$ via the geodesic retraction. We denote by $\Omega^\bullet_{sp}(\hX,\E)$ the sheaf of special differential forms on $\hX$, i.e., the sheafification of the underived pushforward of the presheaf of (restrictions of) special forms on $X$. By \cite[(13.6)]{goresky1994weighted}, it is quasi-isomorphic to $j_*\E$ for $j:X\hookrightarrow \hX$. In fact, the restriction of $\Omega^\bullet_{sp}(\hX,\E)$ to a boundary stratum $X_P$ consists of forms on $X_P$ with values in $C^\bullet(\n_P,E)$, considered as a local system on $X_P$ since the lie algebra cohomology (Koszul complex) $C^\bullet(\n_P,E)$ is naturally a representation of $L_P$; see \cite[\S 11, \S 12]{goresky1994weighted}. In other words,
\begin{equation*}
    \Omega^k_{sp}(\hX,\E)|_{X_P}=\bigoplus_{p+q=k}\Omega^{p}(X_P,C^q(\n_P,\E))=\bigoplus_{p+q=k}\Omega^p(Y_P,\E)^{N_P}.
\end{equation*}

It turns out that for any weight profile $\eta\in (\a^G)^*$, one can associate to a subsheaf $\Omega^\bullet_{sp}(\hX,\E)_{\geq\eta}$ of $\Omega^\bullet_{sp}(\hX,\E)$ by truncating the boundary stratum 
\begin{equation*}
    \Omega^\bullet_{sp}(\hX,\E)_{\geq\eta}|_{X_P}=\bigoplus_{p+q=k}\Omega^{p}(X_P,C^q(\n_P,\E)_{\geq\eta}).
\end{equation*}
This indeed defines a complex of fine sheaves $W^\eta C(\E)$ on $\hX$ (\cite[(14.1)]{goresky1994weighted}), and we call it the \textit{weighted complex of profile} $\eta$. Similarly, we can also define a subcomplex $\Omega^\bullet_{sp}(\hX,\E)_{>\eta}$, which we denote by $\W^{\eta+\varepsilon}C(\E)$. Clearly, when $\eta\leq \eta'$, there is an embedding of sheaves $\W^{\eta'} C(\E)\hookrightarrow \W^{\eta}C(\E)$. In particular, $\W^{\eta+\varepsilon}C(\E)\hookrightarrow W^\eta C(\E)$.

\begin{Example}
    When $\eta=0$ and $\E=\C$ the constant sheaf, the weighted complex $\W^0C(\C)$ is quasi-isomorphic to the constant sheaf $\C_{\hX}$. When $X$ is Hermitian and $\eta=-\rho$ is the middle weight profile, the pushforwards of the weighted complexes $\W^{-\rho+\varepsilon}C(\E)$ and $\W^{-\rho}C(\E)$  to the Baily--Borel compactification are isomorphic, and both are isomorphic to the intersection complex $IC_{\X}(\E)$; see \cite[(23.2)]{goresky1994weighted}. We also remark that $-\rho+\varepsilon$ (resp. $-\rho$) is denoted by the upper (resp. lower) middle weight profile $\mu$ (resp. $\nu$) in \cite{goresky1994weighted}. 
\end{Example}

Finally, the \textit{weighted cohomology} $\W^{\eta}H^k(X,\E)$ is defined as the hypercohomology $\mathbb{H}^k(\hX,W^\eta C(\E))$.

There is another definition of weighted complex in \cite{goresky1994weighted} for rational algebraic representation $E$, and they show that two approaches agree after base change to $\C$; see \cite[(29.2)]{goresky1994weighted}. 

\section{Analytic Aspects of Weighted Cohomology}\label{section: Analytic Aspects of Weighted Cohomology} 

\subsection{Borel's Integrability Criterion}\label{subsection: Borel's Integrability Criterion}
The aim of this subsection is to modify \cite[Proposition 5.5]{borel1974stable}. Here we will state it in the form intentionally for our latter purpose of the estimate on the weight profile $\eta=0$ and $-\rho$, and we do not pursue its generality. 

Recall that for any rational parabolic subgroup $P$ there is a diffeomorphism $\mu_P: A_P\times D_{L_P}\times N_P(\R)\xrightarrow{\sim} D$, with metric relation
\begin{equation*}
    \mu_P^*(dx^2)=da^2+dz^2+\sum_{\beta\in \Phi(\n_P,S_P)}a^{-2\beta}h_{\beta}(z),
\end{equation*}
where $\Phi(\n_P,A_P)$ is the set of all roots of $\n_P$ with respect to $A_P$, and $h_{\beta}$ is a left invariant smooth metric on the root space $\n_\beta:=\{X\in \n_P|Ad(a)X=a^{\beta}X\}$.\footnote{Here we follow the convention in \cite[(3.5)]{zucker19822} instead of \cite[(4.3)]{borel1974stable}, since we assume that the $G$-action is on the left, and the factor $2^{-1}$ in \cite{borel1974stable} is absorbed in the definition of $h_{\beta}$. This also requires to change the signs of certain weights below.}

Let $E$ be an algebraic complex representation of $G$. For a smooth form $\sigma$ on $D$ with values in $E$, the pullback $\mu_P^*\sigma$ defines a form on the product, which is then an element in
\begin{equation*}
    \Omega^\bullet(A_P\times D_{L_P})\hat{\otimes} C^\bullet(\n_P,\E),
\end{equation*}
since differential forms on $N_P(\R)$ are computed by the Koszul complex $\bigwedge^\bullet\n_P^*\otimes E= C^\bullet(\n_P,E)$. 

\begin{Definition}\label{weightless form}
    A smooth form $\sigma$ on $D$ is \textit{weightless} (resp. \textit{upper-middle}) with respect to $P$ if the weights of $\sigma$ in the Koszul complex $C^\bullet(\n_P,E)$ with respect to $A_P$ are all $\geq 0$ (resp. $>-\rho_P$).
\end{Definition}
In particular, for a special form $\sigma\in \W^0C(\E)$ (resp. $\sigma\in \W^{-\rho+\varepsilon}C(\E)$) on $X$, its pullback to $D$ is weightless (resp. upper-middle) with respect to all parabolic subgroups $P$.

This can also be described via coordinates, and we do so following \cite[(5.1)]{borel1974stable}. Let $m=\dim(D)$, and $\Delta_P=\{\alpha_1,...\alpha_s\}$. Fix a moving frame ($1$-forms) $(\omega^i)_{1\leq i\leq m}$ on $A_P\times D_{L_P}\times N_P(\R)$, where $\omega^i$ is lifted via projections, from $d\log\alpha_i$ on $A_P$ for $1\leq i\leq s$, from an orthogonal frame on $D_P$ (for $s< i\leq t$ for some index $t$), and from a set of left invariant $1$-forms on $N_P(\R)$ for $i>t$, which form a basis for each $\n_{\beta}^*\ (\beta\in \Phi(\n_P,S_P))$ at the origin $e\in N_P(\R)$.

For each $i\in \{1,...m\}$, we put $\alpha(i)=-\beta$ if $i>t$ and $\omega^i_e\in\n_{\beta}$, and $\alpha(i)=0$ if $i\leq t$. More generally, for a subset $J$ of $\{1,...m\}$, we set $\omega^J:=\wedge_{i\in J}\omega^i$ and $\alpha(J):=\sum_{i\in J}\alpha(i)$.

Fix a basis $\{e_j\}$ of $E$ consisting of weight vectors of $A_P$, with weights $\gamma_j$. Then for a smooth $k$-form $\sigma$ on $D$ with values in $E$, the pullback $\mu_P^*(\sigma)$ can be written as
\begin{equation*}
    \mu_P^*(\sigma)=\sum_{|J|=k,j}f_{J,j}e_j\omega^J, 
\end{equation*}
where $f_{J,j}$ is a smooth function on the product. It is then by definition that a form $\sigma$ on $D$ is weightless (resp. upper-middle) if and only if $\alpha(J)+\gamma_j\geq 0$ (resp. $\alpha(J)+\gamma_j>-\rho_P$) for each $f_{J,j}$.

For $t>0$, a \textit{Siegel set} $S_{P,t,\omega}$ in $D$ with respect to $P$ is of the form $\mu_P(A_{p,t}\times\omega)$, where $\omega$ is a relatively compact subset in $X_P\times N_P(\R)$, and 
\begin{equation*}
    A_{P,t}:=\{a\in A_P|a^\alpha\geq t\ \textrm{for all}\ \alpha\in \Delta_P\}.
\end{equation*}

The following is a trivial modification of \cite[Lemma 5.4]{borel1974stable}. Recall the notation that $f\prec g$ for two non-negatively real valued functions if there is a constant $c>0$ such that $f\leq cg$.

\begin{Lemma}\label{Lp-criterion of a function}
    Let $1\leq p<\infty$. Let $S_{P,t,\omega}$ be a Siegel set with respect to $P$, and $f$ a positive measurable function on $S_{P,t,\omega}$. Assume that $(f\circ\mu_P)(a,q)\prec a^{-\lambda}\ (a\in A_{P,t}, q\in \omega)$ for some $\lambda\in \a_P^*$, with $2\rho_P+p\lambda> 0$. Then $f$ is $L^p$ on $S_{P,t,\omega}$.
\end{Lemma}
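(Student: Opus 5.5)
The plan is to compute the integral of $f^p$ directly in the horospherical coordinates $\mu_P$, reducing everything to an integral over the cone $A_{P,t}$. First, from the metric relation
\[
\mu_P^*(dx^2)=da^2+dz^2+\sum_{\beta}a^{-2\beta}h_\beta(z),
\]
read off the Riemannian volume form. Each root space $\n_\beta$ contributes a factor $a^{-\beta\dim\n_\beta}$ times a smooth density in $z$, and $da$ is the Haar measure on $A_P$. The volume element on $D$ is therefore
\[
\mu_P^*(dv)=a^{-2\rho_P}\,da\,d\nu(q)
\]
for a smooth positive measure $d\nu$ on $D_{L_P}\times N_P(\R)$. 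Here $2\rho_P=\sum_\beta(\dim\n_\beta)\beta$. The sign conventions follow the footnote: $G$ acts on the left, and $\rho_P$ is taken so that the hypothesis $2\rho_P+p\lambda>0$ is the natural one.

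Since $\omega$ is relatively compact, the smooth density $d\nu$ together with the $z$-dependence of $h_\beta$ is bounded on $\omega$. The hypothesis $f\circ\mu_P\prec a^{-\lambda}$ then gives
\[
\int_{S_{P,t,\omega}}f^p\,dv\;\prec\;\nu(\omega)\int_{A_{P,t}}a^{-p\lambda-2\rho_P}\,da .
\]

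To evaluate the remaining integral, change variables on $A_P\cong(\R_{>0})^s$ via $a\mapsto(x_i)=(a^{\alpha_i})_{i}$ for $\Delta_P=\{\alpha_1,\dots,\alpha_s\}$. This is legitimate because $A_G$ is trivial, so $\Delta_P$ is a basis of $\a_P^*$. Haar measure becomes $\prod dx_i/x_i$, and $A_{P,t}$ becomes $\{x_i\ge t\}$. Writing $2\rho_P+p\lambda=\sum_i c_i\alpha_i$, the integrand becomes $\prod_i x_i^{-c_i}$, so the integral factors as $\prod_i\int_t^\infty x^{-c_i-1}\,dx$. This is finite exactly when every $c_i>0$.

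The main point to check is that the condition ``$2\rho_P+p\lambda>0$'' means positivity in the partial order defined via $\R^{\Delta_P}$, i.e.\ $c_i>0$ for all $i$. That is exactly how the order on $\a_P^*$ was defined in \Cref{subsection: Parabolic Subgroups and Weight Profiles}. The other thing to verify is the sign of the exponent in the volume form under the paper's convention $a^{-2\beta}$. This is the same computation as in \cite[Lemma 5.4]{borel1974stable}, with signs adjusted.
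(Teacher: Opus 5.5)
Your proposal is correct and follows the paper's route: the paper gives no separate proof, only calling the lemma a trivial modification of \cite[Lemma 5.4]{borel1974stable}, and your computation is exactly that argument. You read off the volume element $a^{-2\rho_P}\,da\,d\nu$ from the horospherical metric and reduce to $\int_{A_{P,t}}a^{-(2\rho_P+p\lambda)}\,da$, which factors over $\Delta_P$ and converges precisely when every $\Delta_P$-coefficient of $2\rho_P+p\lambda$ is positive; this matches the coordinatewise strict order the paper uses, since it writes $>\eta$ as $\geq\eta+\varepsilon$.
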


Now we state the integrability criterion we shall use in what follows. This is a modification of \cite[Proposition 5.5]{borel1974stable}, and the proof will be identical.

\begin{Proposition}\label{Lp-cr of a form}
     Let $S_{P,t,\omega}$ be a Siegel set with respect to $P$, and $\sigma$ a smooth $k$-form on $S_{P,t,\omega}$ with values in $E$. Write as before
\begin{equation*}
    \tau:=\mu_P^*(\sigma)=\sum_{|J|=k,j}f_{J,j}e_j\omega^J.
\end{equation*}
\begin{itemize}
    \item [(i.)] Let $2\leq p<\infty$. Assume that $\sigma$ is weightless with respect to $P$, and $|f_{J,j}|\prec a^{-\lambda}$ for $\lambda\in \a_P^*$ with $2\rho_P+p\lambda> 0$. Then $\sigma$ is $L^p$ on $S_{P,t,\omega}$.

    \item [(ii.)] Assume that $\sigma$ is upper-middle with respect to $P$, and $|f_{J,j}|\prec a^{-\lambda}$ for $\lambda\in \a_P^*$ with $\lambda\geq 0$. Then $\sigma$ is $L^2$ on $S_{P,t,\omega}$.
\end{itemize}

\end{Proposition}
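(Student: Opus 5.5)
The plan is to follow the proof of \cite[Proposition 5.5]{borel1974stable}: compute the pointwise norm of $\sigma$ in the frame $(\omega^i)$ and reduce to \Cref{Lp-criterion of a function}. By the metric relation for $\mu_P^*(dx^2)$, the frame $\omega^i$ for $i\le t$ has bounded norm. For $i>t$ with $\omega^i_e\in\n_\beta^*$, the dual metric gives $|\omega^i|\asymp a^{\beta}=a^{-\alpha(i)}$, uniformly for $q\in\omega$ relatively compact, since the $h_\beta$ are left invariant and $\omega$ projects to a compact part of $N_P(\R)$. Hence $|\omega^J|\prec a^{-\alpha(J)}$. For the coefficients we fix an admissible inner product on $E$ (invariant under $K$, with $A_P$ acting by self-adjoint operators). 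The norm of the section $e_j$, transported to the point $\mu_P(a,q)$, then scales like $a^{\gamma_j}$ up to sign conventions; after the sign change explained in the footnote, we write this as $|e_j|\asymp a^{-\gamma_j}$, again uniformly in $q\in\omega$. Combining these,
\begin{equation*}
|\sigma|(\mu_P(a,q))\prec \sum_{J,j}|f_{J,j}|\,a^{-(\alpha(J)+\gamma_j)}\prec \sum_{J,j}a^{-\lambda-(\alpha(J)+\gamma_j)}.
\end{equation*}
The Siegel set is a finite union of such terms, so it suffices to show that each summand is $L^p$ by \Cref{Lp-criterion of a function}, applied with exponent $\lambda_{J,j}:=\lambda+\alpha(J)+\gamma_j$.

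For (i), weightlessness gives $\alpha(J)+\gamma_j\ge 0$ in the partial order on $\a_P^*$. Hence $2\rho_P+p\lambda_{J,j}\ge 2\rho_P+p\lambda>0$, and each summand is $L^p$.

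For (ii), we take $p=2$. The upper-middle condition gives $\alpha(J)+\gamma_j>-\rho_P$, so $2\rho_P+2\lambda_{J,j}=2(\rho_P+\alpha(J)+\gamma_j)+2\lambda>0$, using $\lambda\ge0$. The positivity here must be read in the sense needed by \Cref{Lp-criterion of a function}, namely strict positivity on each simple root coordinate. The weights lie in a lattice, so "$>-\rho_P$" means that $\alpha(J)+\gamma_j+\rho_P$ is strictly positive in every coordinate of $\R^{\Delta_P}$ (this is the role of $\varepsilon$). If instead it means only "$\ge$ and $\ne$", I will use the lattice discreteness to upgrade it coordinatewise.

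The main obstacle is bookkeeping rather than analysis. First, the signs of $\alpha(i)$ and $\gamma_j$ must be reconciled with the left-action convention of \cite{zucker19822}, so that weightless forms really correspond to decaying norms. Second, one must check that the norm estimates for $|\omega^J|$ and $|e_j|$ are uniform over the relatively compact set $\omega$. This uniformity follows from continuity of the metrics on $D_{L_P}\times N_P(\R)$ and compactness. The restriction $p\ge2$ in (i) is not needed for this argument; it is kept only to match the later application.
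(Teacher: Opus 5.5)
Your proposal is correct and is essentially the paper's argument, which follows Borel. The paper expands $(\tau,\tau)=\sum_J f^Jf_J$ through the inverse Gram matrix $g^{J,J'}$, bounds $|f^Jf_J|\prec a^{-2(\lambda+\alpha(J)+\gamma_j)}$, and applies \Cref{Lp-criterion of a function} with exponent $p/2$, which is where $p\geq 2$ enters. You instead bound $|\sigma|$ directly by the triangle inequality and apply the lemma with exponent $p$, so you are right that $p\geq 2$ is then unnecessary.

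One small correction: the fallback you offer for (ii) would not work, since lattice discreteness cannot turn ``$\geq$ and $\neq$'' into coordinatewise strict positivity. You do not need it, because in the paper $>-\rho_P$ means $\geq -\rho_P+\varepsilon$ for a small positive profile $\varepsilon$, which is exactly your main reading.
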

\begin{proof}
We denote the product $A_P\times D_{L_P}\times N_P(\R)$ by $Y$. We show that
\begin{equation*}
    \int_{A_{P,t}\times \omega}(\tau_y,\tau_y)^{p/2}dv_Y<\infty,
\end{equation*}
where $(\ ,\ )$ denotes the scalar product on $\wedge^qT^*Y$ associated to $dy^2$. Write
\begin{equation*}
    dy^2=\sum_{1\leq i,j\leq m}g_{ij}\omega^i\omega^j,
\end{equation*}
and $(g^{ij})$ the inverse matrix of $(g_{ij})$. For two strictly increasing sequences $J=\{j_1,...j_q\}, J'=\{j_1',...,j_q'\}$ in $\{1,...,m\}$, put
\begin{equation*}
    g^{J,J'}:=\det(g^{j_i,j_k'}),
\end{equation*}
and let $f^J:=\sum_{J'}g^{J,J'}f_{J'}$. Then
\begin{equation*}
    (\tau_y,\tau_y)^{p/2}=(\sum_{J,J'}g^{J,J'}f_Jf_{J'})^{p/2}=(\sum_{J}f^Jf_J)^{p/2},
\end{equation*}
where we write $f_J:=\sum_jf_{J,j}e_j$ as a function on $Y$ with values in $E$. It suffices to show that $f^Jf_J$ is $L^{p/2}$ on $A_{P,t}\times \omega$. As in the calculation in \cite[Proposition 5.5]{borel1974stable}, we have
\begin{equation*}
    |f^Jf_J|\prec a^{-2(\lambda+\alpha(J)+\gamma_j)}.
\end{equation*}
By \Cref{Lp-criterion of a function}, it is $L^{p/2}$ if $2\rho_P+p(\lambda+\alpha(J)+\gamma_j> 0$. By our assumption that $\sigma$ is weightless (resp. upper-middle), $\alpha(J)+\gamma_j\geq 0$ (resp. $\alpha(J)+\gamma_j>-\rho_P$), and therefore it holds true if $2\rho_P+p\lambda> 0$ (resp. $\lambda\geq 0$).
\end{proof}

\subsection{Integrability of Special Forms}\label{subsection: Integrability of Special Forms}
For any $1\leq p<\infty$, we denote by $A^\bullet_{(p)}(\E)$ the sheaf of $L^p$-forms on $\hX$, that is, the sheafification of the presheaf
\begin{equation*}
    U\in Op(\hX)\mapsto \{\sigma\ \textrm{a smooth form on}\ U\cap X|\textrm{Both}\ \sigma\ \textrm{and}\ d\sigma\ \textrm{are}\ L^p\}.
\end{equation*}
The goal of this subsection is to prove the following result.
\begin{Proposition}\label{Lp of special}
    \begin{itemize}
        \item [(i.)] For any $1\leq p<\infty$, there is an inclusion of sheaves
    \begin{equation*}
        \W^0C(\E)\hookrightarrow A_{(p)}(\E).
    \end{equation*}
        \item[(ii.)] there is an inclusion of sheaves
        \begin{equation*}
             \W^{-\rho+\varepsilon}C(\E)\hookrightarrow A_{(2)}(\E).
        \end{equation*}
    \end{itemize}
\end{Proposition}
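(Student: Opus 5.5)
Both inclusions are local statements on $\hX$, and both complexes are closed under $d$. It therefore suffices to show the following: every special form $\sigma$ that is a section of $\W^0C(\E)$ (resp. $\W^{-\rho+\varepsilon}C(\E)$) over a small open set $V\subseteq\hX$ is $L^p$ (resp. $L^2$) on $V\cap X$. Applying this also to $d\sigma$, which is again a special form of the same weight profile, shows that $\sigma$ is a section of $A_{(p)}(\E)$. Injectivity is automatic, since both sheaves consist of forms on $X$ and the map is the identity on forms.

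\textbf{Reduction to Siegel sets.} Since $\Gamma\backslash\PP$ is finite, I would use reduction theory. The space $X$ is covered by the images of finitely many Siegel sets $S_{P,t,\omega}$, one for each $\Gamma$-conjugacy class of parabolic subgroups, together with a relatively compact piece. Moreover, a point of $X_P\subseteq\hX$ has a neighbourhood whose intersection with $X$ is covered by finitely many such Siegel sets attached to parabolics $Q\le P$. The projection $D\to X$ is finite-to-one on a Siegel set (Siegel finiteness), so integrability on $X$ follows from integrability of the pullback of $\sigma$ on each Siegel set $S_{Q,t,\omega}$. We may shrink $V$ so that on the relevant geodesic neighbourhood $U_Q$ the form $\sigma$ is the pullback $r_Q^*\sigma_Q$ of an $N_Q$-invariant form on $Y_Q$. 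By the remark after \Cref{weightless form}, this pullback is weightless (resp. upper-middle) with respect to $Q$.

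\textbf{Coefficient bounds.} Write $\mu_Q^*\sigma=\sum f_{J,j}e_j\omega^J$ in Borel's moving frame. I would aim to show that the coefficients are bounded, that is $|f_{J,j}|\prec a^{0}$, so that we may take $\lambda=0$. Then \Cref{Lp-cr of a form}(ii) gives the $L^2$ statement directly. For (i), the condition $2\rho_Q+p\cdot 0>0$ holds for every $p$ (as $\rho_Q>0$ on $\a_Q$), which handles $p\ge 2$. For $1\le p<2$, I would either inspect the proof to see that the exponent condition becomes $2\rho_Q+p(\lambda+\alpha(J)+\gamma_j)>0$, which still holds with $\lambda=0$ and nonnegative weights (the Hölder-type step with $p/2<1$ changes only constants), or use finite volume of $X$ together with $L^2\subseteq L^p$ for $p\le 2$ and the $L^p$ bound for $p\ge2$.

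Boundedness of the $f_{J,j}$ is the key point. Because $\sigma$ is a pullback via the geodesic retraction of an $N_Q$-invariant form, the coefficient functions in the invariant frame are independent of the $A_Q$-coordinate near the boundary. They depend only on the $D_{L_Q}\times N_Q(\R)$-factor, on which $\omega$ is relatively compact. Hence they are bounded, provided that in the $D_{L_Q}$ directions one handles the deeper strata. The obstacle is that $\omega$ in the Siegel set relative to $Q$ still runs off to infinity in $X_Q$ near strata $X_P$ with $P<Q$. I would handle this by choosing Siegel sets adapted to the stratum, using $P$ minimal for the point being considered, so that $\omega$ is genuinely relatively compact in $X_P\times N_P(\R)$. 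I would then check, via the compatibility of geodesic retractions $r_P\circ r_Q=r_P$ from \cite{goresky1994weighted}, that on the overlap the form is simultaneously special for every $Q\ge P$, and that its $A_P$-weights are still $\ge 0$ (resp. $>-\rho_P$). Verifying this compatibility of weights and retractions across nested strata, including the corner coordinates $[0,2)^s$, is the step I expect to require the most care.
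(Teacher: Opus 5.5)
Your estimate is the right one and matches the paper's. Where $\sigma=r_P^*\sigma_P$ with $\sigma_P$ an $N_P$-invariant form on $e_P$, the coefficients $f_{J,j}$ are independent of $a\in A_P$ and depend only on the $e_P$-coordinate, which ranges over the relatively compact $\omega$. So \Cref{Lp-cr of a form} applies with $\lambda=0$, and finite volume handles $p<2$.

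The gap is in how you arrange to be in that situation, and the step you flag as hardest is, as formulated, false.

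\textbf{The two problematic steps.} In your reduction paragraph you cover a neighbourhood of $x\in X_P$ by Siegel sets for $Q\le P$ and then shrink $V$ so that $\sigma=r_Q^*\sigma_Q$. For $Q\subsetneq P$ this cannot be arranged, because a small neighbourhood of $x$ stays away from $Y_Q$. In your last paragraph you propose instead to show that, on a Siegel set for $P$, the $A_P$-weights of $\sigma$ stay $\ge 0$ (resp.\ $>-\rho_P$) even where $\sigma$ is only special for some $Q\supsetneq P$. Outside $U_P$ this fails in general. There $\sigma=r_Q^*\sigma_Q$, with $\sigma_Q$ an essentially arbitrary smooth form on a compact part of $X_Q$. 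Its components along the directions $\n_P\cap\mathfrak{l}_Q$ inside $D_{L_Q}$ have $A_P$-weight $-\beta$ for roots $\beta$ of $\n_P\cap\mathfrak{l}_Q$, which is not $\ge 0$. Only the $A_Q$-weights are controlled there. Also, $\omega$ cannot ``run off to infinity'': it is relatively compact by definition.

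\textbf{How to close the gap.} For the sheaf statement none of this is needed. Only strata $X_Q$ with $Q\supseteq P$ meet a small neighbourhood of $x\in X_P$. The point $x$ has a basic neighbourhood $V$ with $V\cap X$ equal to the image of $\mu_P(\{a^\alpha>t\ \text{for all}\ \alpha\in\Delta_P\}\times U\times N_P(\R))$, where $U$ is relatively compact in $D_{L_P}$. Since $\Gamma_{N_P}\backslash N_P(\R)$ is compact, this set is covered by a single Siegel set $S_{P,t,\omega}$. Now $U_P$ is a neighbourhood of $Y_P$, and $S_{P,t,\omega}$ shrinks into any neighbourhood of $\bar\omega\subseteq e_P$ as $t\to\infty$. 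Hence for $t\gg 0$ the whole Siegel set lies in $U_P$. There $\sigma=r_P^*\sigma_P$ has $A_P$-weights $\ge 0$ (resp.\ $>-\rho_P$) by the definition of $\W^0C(\E)$ (resp.\ $\W^{-\rho+\varepsilon}C(\E)$), and your $\lambda=0$ bound finishes. No compatibility of retractions across nested strata enters.

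\textbf{Comparison with the paper.} This local route is shorter than the paper's. The paper proves the stronger claim that every global special form is $L^p$ on all of $X$, using reduction-theoretic Siegel sets with a fixed $t$. It therefore has to split $S_{P,t,\omega}\setminus S_{P,t_0,\omega}$ into pieces $\mu_P(R_Q\times\omega)$ with $Q\supsetneq P$. It then places each piece, up to a relatively compact set, inside a Siegel set $S_{Q,t',\omega'}\subseteq U_Q$, and uses weightlessness with respect to $Q$ in the $Q$-frame there. That change of frame is exactly what replaces your proposed $A_P$-weight check.
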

\begin{proof}
    We only prove (i.), since the proof of (ii.) will be identical by applying the second part of \Cref{Lp-cr of a form}.
    
    It suffices to do it on the level of presheaves, and since a special form on an open subset is the restriction of a special form on $X$, and the differential of a special form is again special of the same weight profile, it suffices to show that any special form $\sigma$ on $X$ is $L^p$. Since $X$ has finite volume, we assume that $p\geq 2$ without loss of generality. Finally, by reduction theory, it suffices to show that the pullback to $D$ (still denoted by $\sigma$), is $L^p$ on any Siegel subset $S_{P,t,\omega}=\mu_P(A_{P,t}\times\omega)$. 

    Let $U_P$ be a geodesic neighborhood of $e_P$ in $D$ on which $\sigma$ is lifted from an $N_P$-invariant form on $e_P$. We note that there exists $t_0\geq t$ such that $S_{P,t_0,\omega}\subseteq V_P$ since $\omega$ is relatively compact, and Siegel sets form a fundamental system of neighborhoods of points in a corner (\cite[(6.2)]{borel1973corners}). Since $\sigma$ is lifted from $e_P\cong D_{L_P}\times N_P(\R)$, its pullback has constant growth along $A_P$ under the diffeomorphism $\mu_P:A_P\times D_{L_P}\times N_P(\R)\xrightarrow{\sim} D$, and since $\sigma$ is weightless, we apply \Cref{Lp-cr of a form} for $\lambda=0$ to see that $\sigma$ is $L^p$ on $S_{P,t_0,\omega}$ for all $2\leq p<\infty$. 

    Observe that
    \begin{equation*}
        S_{P,t,\omega}-S_{P,t_0,\omega}=\coprod_{P\subseteq Q}\mu_P(R_Q\times \omega),
    \end{equation*}
where
\begin{equation*}
    R_Q:=\{a\in A_{P,t}|a^\alpha\geq t_0\ \textrm{for}\ \alpha\in  \Delta_Q,a^\alpha<t_0\ \textrm{for}\ \alpha\notin \Delta_Q\}.
\end{equation*}
We claim each space in the disjoint union is contained in a Siegel set $S_{Q,t',\omega'}$ up to a relatively compact subset in $D$, such that $S_{Q,t',\omega'}$ is contained in a geodesic neighborhood $U_Q$ of $e_Q$. Then the above argument applies to each of these Siegel sets and completes the proof, since a globally defined smooth function on $D$ is $L^p$ on any relatively compact subset. The construction of these Siegel sets is a standard coordinate calculation via the geodesic neighborhood, and we leave it as the lemma below.
\end{proof}

Fix a parabolic subgroup $Q$ containing $P$. For any $a\in A_P$, we define $a_0,a_1\in A_P$ with 
\begin{equation*}
a_0^\alpha=\left\{
    \begin{aligned}
        &t_0,\alpha\in \Delta_Q\\
        &a^\alpha,\alpha\notin\Delta_Q
    \end{aligned}
    \right.,
    a_1^\alpha=\left\{
    \begin{aligned}
        &t_0^{-1}a^\alpha,\alpha\in\Delta_Q,\\
       & 1, \alpha\notin\Delta_Q.
    \end{aligned}
    \right.
\end{equation*}
Then $a=a_0a_1$. This gives a decomposition of $A_P$ with respect to the coordinates on $Q$-directions ($a_0$ stands for the "wall direction", and $a_1$ stands for the "ray direction"). We denote the set of all such $a_1$ appearing in the factorization of $a\in R_Q$ by $A_Q^+$, and the set of all $a_0$ appearing in the factorization of $a\in R_Q$ by $B$. 
Note that $a_1^\alpha=1$ when $\alpha\notin \Delta_Q$ while $a_1^\alpha\geq 1$ when $\alpha\in \Delta_Q$. Meanwhile, $t\leq a_0^\alpha<t_0$ for $a_0\in B$, so it is a relatively compact subset in $A_P$. We then have
\begin{equation*}
    \mu_P(R_Q\times \omega)=A_Q^+\cdot K,
\end{equation*}
where $K=\mu_P(B\times\omega)$ is relatively compact in $D$.  Set $\omega':=r_Q(K)\subseteq e_Q$, where $r_Q:D\to e_Q$ is the geodesic retraction. We shall consider Siegel sets of the form $S_{Q,t',\omega'}$.

Now we bring in the geodesic neighborhood. Consider a geodesic neighborhood of $e_Q$:
\begin{equation*}
    (r_Q,\rho_Q):U_Q\to e_Q\times[0,2)^{s}.\ (s=rank_\Q Q)
\end{equation*}

\begin{Lemma}
    When $t'$ is sufficiently large, $S_{Q,t',\omega'}$ is contained in $U_Q$, and its difference with $\mu_P(R_Q\times\omega)$ is relatively compact in $D$.
\end{Lemma}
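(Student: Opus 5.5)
The plan is to work in the horospherical coordinates $\mu_Q: A_Q\times e_Q\xrightarrow{\sim} D$ attached to $Q$. Both claims will follow from two facts: the factorization $\mu_P(R_Q\times\omega)=A_Q^+\cdot K$ established just before the statement, and the compatibility of the geodesic actions of $A_P$ and $A_Q$. I read ``its difference'' as the set-theoretic difference $\mu_P(R_Q\times\omega)-S_{Q,t',\omega'}$, which is what the proof of \Cref{Lp of special} needs.

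\textbf{Step 1 (compatibility of coordinates).} The elements $a_1\in A_Q^+$ satisfy $a_1^\alpha=1$ for $\alpha\notin\Delta_Q$, so they lie in $A_Q\subseteq A_P$. The geodesic action of $A_Q$ on $D$ is the restriction of the geodesic action of $A_P$, and it preserves the fibres of the geodesic retraction $r_Q$ (\cite[(6.3)]{goresky1994weighted}). Hence for $x\in K$ with $\mu_Q^{-1}(x)=(b(x),r_Q(x))$ we get
\begin{equation*}
  \mu_Q^{-1}(a_1\cdot x)=(a_1b(x),\,r_Q(x)).
\end{equation*}
The map $x\mapsto b(x)\in A_Q$ is continuous on the compact closure $\bar K$. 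Therefore there are constants $0<c_1\leq c_2$ with $c_1\leq b(x)^\alpha\leq c_2$ for all $x\in\bar K$ and $\alpha\in\Delta_Q$. Moreover $\omega'=r_Q(K)$ is relatively compact in $e_Q$.

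\textbf{Step 2 (containment in $U_Q$).} Under $(r_Q,\rho_Q)$, the coordinates $\rho_Q$ of a geodesic neighbourhood are, up to a monotone reparametrization, the functions $a^{-\alpha}$ for $\alpha\in\Delta_Q$. The neighbourhood $U_Q$ meets each fibre of $r_Q$ over a point of $e_Q$ in a ``corner'' $\{a^\alpha\geq T\}$. Since $\bar\omega'$ is compact, I will choose one threshold $T$ uniformly over $\bar\omega'$. I will do this either directly from the construction in \cite[(6.4)]{goresky1994weighted}, or from the fact that Siegel sets $S_{Q,T,\omega'}$ form a fundamental system of neighbourhoods of compact subsets of $e_Q$ in the corner (\cite[(6.2)]{borel1973corners}). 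Then for $t'\geq T$,
\begin{equation*}
  S_{Q,t',\omega'}=\mu_Q(A_{Q,t'}\times\omega')\subseteq U_Q.
\end{equation*}
I expect this to be the main point needing care. The uniformity of $T$ depends on $\omega'$ being relatively compact, and this is exactly why the compact set $K$ was isolated.

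\textbf{Step 3 (the difference is relatively compact).} Fix such a $t'$ and take a point $a_1\cdot x\in\mu_P(R_Q\times\omega)$ that is not in $S_{Q,t',\omega'}$. By Step 1 its $e_Q$-component $r_Q(x)$ lies in $\omega'$. So the only way it can fail to lie in the Siegel set is that $(a_1b(x))^\alpha<t'$ for some $\alpha\in\Delta_Q$, which forces $a_1^\alpha<t'/c_1$.

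Consequently the difference is contained in $C\cdot\bar K$, where
\begin{equation*}
  C:=\{a_1\in A_Q:\ 1\leq a_1^\alpha\leq t'/c_1\ \text{for all } \alpha\in\Delta_Q\}
\end{equation*}
after enlarging each condition to all of $\Delta_Q$. The set $C$ is compact. The set $C\cdot\bar K$ is the image of the compact set $C\times\bar K$ under the continuous geodesic action, so it is compact in $D$. This proves the lemma.
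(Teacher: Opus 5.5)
Your Steps 1 and 2 are fine. Step 2 only needs the tube lemma around the compact set $\overline{\omega'}\subseteq e_Q$ in the corner, not a fibrewise description of $U_Q$. Working in the global coordinates $\mu_Q$ is also cleaner than the paper's detour through $K_1=A_0\cdot K$.

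The gap is in Step 3. A point $a_1\cdot x$ fails to lie in $S_{Q,t',\omega'}$ when $(a_1b(x))^\alpha<t'$ for \emph{some} $\alpha\in\Delta_Q$. This bounds $a_1^\alpha$ only for that one $\alpha$. ``Enlarging each condition to all of $\Delta_Q$'' replaces a union over $\alpha$ by an intersection, which shrinks the set rather than enlarging it. The correct containment is in $\{a_1\in A_Q^+ : a_1^\alpha<t'/c_1\ \text{for some}\ \alpha\in\Delta_Q\}\cdot\overline{K}$, which is unbounded once $|\Delta_Q|\geq 2$.

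The conclusion itself then fails. Suppose $|\Delta_Q|\geq 2$, fix $\alpha_1\in\Delta_Q$ and $x\in K$, and take $a_1\in A_Q^+$ with $a_1^{\alpha_1}=1$ and $a_1^{\alpha}=N$ for $\alpha\neq\alpha_1$. By your Step 1, the $A_Q$-coordinate of $a_1\cdot x$ is $a_1b(x)$, whose $\alpha_1$-component is $b(x)^{\alpha_1}\leq c_2$. So for every $t'>c_2$ these points lie in $\mu_P(R_Q\times\omega)=A_Q^+\cdot K$ but not in $S_{Q,t',\omega'}$. They leave every compact subset of $D$ as $N\to\infty$.

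So the lemma as stated holds only for $|\Delta_Q|\leq 1$, which covers all $Q\supsetneq P$ when $|\Delta_P|\leq 2$. The paper's proof contains the same some/all slip. It asserts that the complement of $A^+_{Q,c_0}\cdot K$ in $A^+_Q\cdot K$ is relatively compact, and it places the leftover part of $U_Q$ in the box $\omega'\times[\varepsilon,1]^s$. It therefore offers no route around the problem.

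What the proof of \Cref{Lp of special} actually needs can be recovered by induction on $|\Delta_Q|$. Set $T=t'/c_1$ and take $\emptyset\neq I\subseteq\Delta_Q$. The piece $\{a_1\in A_Q^+ : a_1^\alpha<T\ (\alpha\in I),\ a_1^\alpha\geq T\ (\alpha\notin I)\}\cdot K$ factors, exactly as in $a=a_0a_1$, as $A^+_{Q'}\cdot K'$. Here $K'$ is relatively compact and $Q'\supsetneq Q$ is the parabolic whose simple roots correspond to $\Delta_Q\setminus I$. These pieces, together with $S_{Q,t',\omega'}$, cover $A_Q^+\cdot K$. The piece with $I=\Delta_Q$ (that is, $Q'=G$) is relatively compact. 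Each remaining piece is handled by the same argument for $Q'$, which has smaller rank.
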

\begin{proof}
    Note that the $A_Q^+$-action only increases $Q$-coordinates, and $K$ is relatively compact, there exists a uniform $c_0>0$ such that $A_{Q,c_0}^+\cdot K\subseteq U_Q$ where
\begin{equation*}
    A_{Q,c_0}^+:=\{a\in A^+_Q|a^\alpha\geq c_0\ \textrm{for} \ \alpha\in \Delta_Q\}.
\end{equation*}
The complement of this in $A^+_Q\cdot K$ is relatively compact in D. We replace $K$ by $K_1:=A_0\cdot K\subseteq U_Q$ where $A_0$ is the subset of $a^\alpha=c_0$ for $\alpha\in \Delta_Q$. Note that $\omega'=r_Q(K_1)$. Since $K_1$ is also relatively compact and is contained in $U_Q$, its distance (via $\rho_Q$) to $e_Q$ is bounded from below. In other words, there exists $\varepsilon>0$, such that $\omega'\times (0, \varepsilon)^{s}$ is disjoint from $K_1$. This is the Siegel set $S_{Q,t',\omega'}$ we are looking for (the distance $\varepsilon$ determines $t'>0$ in turn). The difference between this Siegel set and $\mu_P(R_Q\times \omega)=A_Q^+\cdot K$ consists of the part outside of $U_Q$, which is relatively compact; and the part in $U_Q$, which is contained in $\omega'\times [\varepsilon,1]^{s}$, is relatively compact as well.
\end{proof}

\subsection{An Eventual Quasi-isomorphism and Surjectivity}\label{subsection: An Eventual Quasi-isomorphism and Surjectivity}

The inclusion for weight profile $0$ established in the previous subsection becomes an eventual quasi-isomorphism.
\begin{Proposition}\label{eventual isomorphism}
    For $p<\infty$ sufficiently large (depending on $E$), the inclusion $\W^0C(\E)\hookrightarrow A_{(p)}(\E)$
    becomes a quasi-isomorphism.
\end{Proposition}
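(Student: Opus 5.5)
The claim is local on $\hX$, so I will check that the inclusion $\W^0C(\E)\hookrightarrow A_{(p)}(\E)$ induces isomorphisms on stalk cohomology at every point $x\in\hX$. On the open stratum $X$ both sheaves are resolutions of $\E$ by the Poincaré lemma (local $L^p$ conditions are vacuous on relatively compact sets), so the real work is at a boundary point $x\in X_P$. Following Zucker's argument for trivial coefficients in \cite{zucker2001reductive}, I will compute both stalks on a basic neighborhood of $x$. In $\BSX$, the preimage of a small neighborhood of $x$ is $p^{-1}(V)\cap X\cong \Gamma_{N_P}\backslash N_P(\R)\times V'\times (t,\infty)^s$, fibered over a small ball $V'\subseteq X_P$, where the $A_P$-cone directions are written in geodesic coordinates. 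I will treat the general case by an induction on strata: near $x$ there are also points of higher strata $X_Q$ with $Q\supseteq P$, and these are handled by the same computation for $L_P$ (the link of $x$ is a reductive Borel--Serre compactification of the Levi part), as in \cite{zucker2001reductive}.

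Since $\W^0C(\E)$ computes $j_*$ truncated by weights, its stalk at $x$ has cohomology
\begin{equation*}
H^\bullet(\n_P,E)_{\geq 0},
\end{equation*}
by \cite[\S 12--14]{goresky1994weighted} combined with van Est/Nomizu for the nilmanifold fiber. For the $L^p$ side, I will compute the local $L^p$-cohomology of $N$-fibre $\times$ cone by an $A_P$-Fourier/Künneth decomposition. First, using a Nomizu-type retraction compatible with $L^p$ norms, I reduce to $N_P$-invariant forms, i.e. to forms on $V'\times(t,\infty)^s$ with values in $C^\bullet(\n_P,E)$. By Kostant's theorem these split into $A_P$-weight spaces $H^\bullet(\n_P,E)_\chi$. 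Writing the metric as in \S\ref{subsection: Borel's Integrability Criterion}, a form of weight $\chi$ constant along $A_P$ has pointwise norm $\asymp a^{-\chi}$, and the volume is $a^{-2\rho_P}da$. The $L^p$-cohomology of the half-line factor with weight $\chi$ is then the classical one-dimensional computation: it is either the constants (when $p\chi+2\rho_P>0$ in the relevant simple-root direction) or zero. For large $p$ that computation is also Hausdorff, which avoids reduced versus non-reduced issues.

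The key point is the choice of $p$. Only finitely many weights $\chi$ occur in $H^\bullet(\n_P,E)$ over the finitely many $\Gamma$-classes of $P$. Hence for $p$ large, the condition $p\chi+2\rho_P>0$ coordinatewise along each simple root is equivalent to $\chi\geq 0$. One caveat: for $\chi$ with a zero coordinate this must be checked directly, and weight $0$ does survive since $2\rho_P>0$. Thus for $p\gg0$ the local $L^p$-cohomology equals $H^\bullet(\n_P,E)_{\geq0}$. The inclusion of special forms realizes exactly these classes, via constant-in-$a$ $N$-invariant representatives, which are $L^p$ by \Cref{Lp of special}, so the map on stalk cohomology is an isomorphism.

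I expect the main obstacle to be the multi-parameter cone $(t,\infty)^s$ at corners together with the higher strata in the neighborhood. There one needs an $L^p$ Künneth/homotopy operator that respects the weight splitting in several directions simultaneously, and a bounded-integration homotopy on each ray whose $L^p$-boundedness depends on the same inequality $p\chi+2\rho_P>0$. I would first construct this operator on one ray using Hardy's inequality and then iterate over $\Delta_P$, checking that the iteration stays compatible with the truncation $\geq 0$ by using the product order on $\R^{\Delta_P}$.
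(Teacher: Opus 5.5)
Your proposal follows essentially the same route as the paper. The paper also checks the inclusion on stalk cohomology, takes $H^\bullet(\n_P,E)_{\geq 0}$ for $\W^0C(\E)$ from \cite[(17.2)]{goresky1994weighted}, and invokes Zucker's local $L^p$ computation \cite{zucker2001reductive}, where the integrability condition $p\chi+2\rho_P>0$ reduces to $\chi\geq 0$ once $p$ is large relative to the finitely many weights; your sketch spells out more of that computation (Nomizu reduction, weight splitting, weighted Hardy/K\"unneth on the cone).

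One correction, which does not affect the argument: the link of $x\in X_P$ in $\hX$ is not a reductive Borel--Serre compactification of the Levi. It is the nilmanifold $\Gamma_{N_P}\backslash N_P(\R)$ over a closed simplex with partial collapses, e.g.\ a circle at a cusp of a modular curve. You never need this, since the stalk is the colimit of the $L^p$-cohomology of $U\cap X$ over small neighborhoods $U$ of $x$, and the nearby strata $X_Q$, $Q\supseteq P$, are already handled by your multi-parameter cone computation.
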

\begin{proof}
    It suffices to check the isomorphism on the stalk cohomology. The stalk cohomology of $\W^0C(\E)$, according to \cite[(17.2)]{goresky1994weighted}, at the stratum $X_P$, is computed by the truncation of the Lie algebra cohomology
    \begin{equation*}
        H^i(\W^0C(\E))|_{X_P}\cong \H^i(\n_P,E)_{\geq 0}.
    \end{equation*}
    Here $\H$ denotes the local system on $X_P$ associated to the $L_P$-representation $H^i(\n_P,E)$. It remains to determine the stalk of $L^p$-cohomology. In fact, it can be shown verbatim following \cite[(2.2.1),(3.1.4),(3.1.7)]{zucker2001reductive} that, when $p$ is sufficiently large, the stalk cohomology of $H^i(A_{(p)}(\E))$ at a point in $X_P$ has contributions only from those weights $\beta\in \Phi(\n_P,S_P)$ satisfying $-p\beta-2\rho_P\leq 0$ for arbitrarily large $p$, i.e., $\beta\geq 0$; and the precise contribution is identified with $\oplus_{\beta\geq0}H^i(\n_P,E)=H^i(\n_P,E)_{\geq 0}$.
\end{proof}

Finally, we discuss a consequence to the reduced $L^2$-cohomology by connecting weight profile $0$ and the middle weight profile $\rho$. As $X$ is complete as a Riemannian manifold, $L^2$-Stokes theorem holds and the reduced $L^2$-cohomology $\overline{H}^*_{(2)}(X,\E)$ is identified with the space $\H^*_{(2)}(X,\E)$ of $L^2$-harmonic forms. 

\begin{Theorem}\label{p to 2 surjectivity}
    The inclusion $\W^0C(\E)\hookrightarrow \W^{-\rho+\varepsilon}C(\E)\hookrightarrow A_{(2)}(\E)$ induces a surjection
    \begin{equation*}
        \W^0H^*(X,\E)\twoheadrightarrow \overline{H}^*_{(2)}(X,\E)
    \end{equation*}
\end{Theorem}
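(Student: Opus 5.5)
The map is built from the weighted cohomology side, and the surjectivity comes from the $L^p$ side. Fix $p\geq 2$ large enough for \Cref{eventual isomorphism}, so that $\W^0H^k(X,\E)\cong \mathbb{H}^k(\hX,A_{(p)}(\E))$. Since $A_{(p)}(\E)$ is a complex of fine sheaves on the compact space $\hX$, this hypercohomology is the $L^p$-cohomology $H^k_{(p)}(X,\E)$: closed $L^p$-forms with $L^p$ differential, modulo $d$ of such forms.

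By \Cref{Lp of special}, the composite $\W^0C(\E)\hookrightarrow\W^{-\rho+\varepsilon}C(\E)\hookrightarrow A_{(2)}(\E)$ sends a closed special form to a closed $L^2$-form. We then take its class in $H^k_{(2)}$ and project to the reduced group $\overline{H}^k_{(2)}(X,\E)\cong\H^k_{(2)}(X,\E)$. It is worth checking that this map agrees with the one obtained through $L^p$: since $X$ has finite volume and $p\geq 2$, Hölder gives $L^p\subseteq L^2$. Hence there is a map of complexes $A_{(p)}(\E)\to A_{(2)}(\E)$ compatible with both inclusions of $\W^0C(\E)$. The map in the theorem therefore factors as
\begin{equation*}
\W^0H^k(X,\E)\xrightarrow{\sim}H^k_{(p)}(X,\E)\to H^k_{(2)}(X,\E)\to\overline{H}^k_{(2)}(X,\E).
\end{equation*}

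To prove surjectivity it suffices to show that every $L^2$-harmonic form $\omega\in\H^k_{(2)}(X,\E)$ lies in the image. The key input is the estimate in \cite{bergeron2006proprietes}, which says that $L^2$-harmonic forms on $X$ lie in $L^p$ for every $p<\infty$. The mechanism is that harmonic forms have controlled exponential decay on Siegel sets, governed by the constant terms along each $A_P$. Then $\omega$ and $d\omega=0$ are both $L^p$, so $\omega$ defines a class in $H^k_{(p)}(X,\E)$. By the factorization above, this class maps to the reduced class of $\omega$, which is $\omega$ itself under the harmonic identification. Combined with the isomorphism from \Cref{eventual isomorphism}, this gives surjectivity.

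The main obstacle is this analytic step. One must confirm that Bergeron's estimate applies to arbitrary coefficient systems $E$ and to every $p$, in particular to the large $p$ required by \Cref{eventual isomorphism}. If uniform-in-$p$ integrability is not available directly, I would use the growth bounds on harmonic forms, $|f_{J,j}|\prec a^{-\lambda}$ on Siegel sets with $\lambda$ given by the leading exponents of the constant terms. These exponents are strictly positive in the relevant directions for square-integrable harmonic forms, and I would feed them into \Cref{Lp-criterion of a function} coefficientwise, exactly as in the proof of \Cref{Lp-cr of a form}. Reduction theory then handles the complement of the Siegel sets.

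A secondary point is to check that the $L^p$-cohomology computed by the fine sheaf $A_{(p)}(\E)$ really coincides with the global $L^p$-de Rham cohomology. This holds because the sheaves are fine, with partitions of unity by special smooth cutoffs having bounded differential. It is also what makes the identification of the class of $\omega$ legitimate.
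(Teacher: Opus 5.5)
You follow the paper's proof step for step. You identify $\W^0H^*(X,\E)$ with $H^*_{(p)}(X,\E)$ for large $p$ by \Cref{eventual isomorphism}, pass to $L^2$ by Hölder, and then need every $L^2$-harmonic form to lie in that $L^p$; the paper gets this last step from the quoted bound $\|\omega\|_\infty\le c_\omega\|\omega\|_2$. The step you flagged is a genuine gap, and in fact it is false, both in the paper's formulation (boundedness) and in yours. Your fallback also fails, and it shows where. Square-integrability only forces the constant-term exponents of a harmonic form to be upper-middle ($>-\rho_P$, the regime of \Cref{Lp-cr of a form}(ii)). Membership in $L^p$ for all large $p$ requires them to be weightless ($\ge 0$; compare \Cref{Lp-cr of a form}(i) and the stalk computation in \Cref{eventual isomorphism}). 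Nothing makes these exponents nonnegative, and that discrepancy is exactly the difference between $\W^{-\rho+\varepsilon}C(\E)$ and $\W^0C(\E)$. Here is a local model. At a cusp of a $\Q$-rank one quotient of the complex $n$-ball ($n\ge 2$), $\n_P$ is a Heisenberg algebra, $A_P$ acts on $\n_P/Z(\n_P)$ by $\chi$, and $2\rho_P=2n\chi$. In Siegel coordinates $(w,u)$ the holomorphic form $du$ is $N_P$-invariant, closed and coclosed, of weight $-\chi>-\rho_P$. Its pointwise norm is $\asymp a^{\chi}$ against the volume density $a^{-2\rho_P}$. Hence it is $L^2$, unbounded, and in $L^p$ only for $p<2n$.

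This also happens globally. Take Hirzebruch's ball quotient $X$, which Holzapfel showed is a Picard modular surface; pass to a neat subgroup if necessary. Its toroidal compactification is $\mathrm{Bl}_0(E\times E)$ with $E=\C/\Z[e^{2\pi i/3}]$. Its boundary consists of four elliptic curves $\tilde T_1,\dots,\tilde T_4$, two of which are the strict transforms of $E\times 0$ and $0\times E$. Let $z_1$ be the coordinate on the first factor. The form $dz_1$ is closed and coclosed for the Bergman metric, being holomorphic on a Kähler manifold. Near the cusp of $E\times 0$ its leading term is a nonzero multiple of $du$. By the local computation with $n=2$, it is therefore $L^2$-harmonic but not in $L^p$ for any $p\ge 4$.

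Worse, its class is not in the image at all. It has a nonzero period on a loop in the Heisenberg nilmanifold linking that cusp. That nilmanifold lies in a cone neighbourhood of the cusp point in $\hX=\X$ (here $X_P$ is a point). So every class coming from $\W^0H^1(X,\C)=H^1(\hX;\C)$ has zero period on that loop. Concretely,
$H^1(\hX;\C)=\ker\bigl(H^1(E\times E;\C)\to\bigoplus_iH^1(\tilde T_i;\C)\bigr)=0$, whereas $\overline{H}^1_{(2)}(X,\C)\cong IH^1(\X;\C)\cong H^1(E\times E;\C)\cong\C^4$. So the statement fails in general, and no sharper estimate can close the step. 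What your argument does prove is that the image contains every class whose harmonic representative lies in $L^p$ beyond the threshold of \Cref{eventual isomorphism}, for instance all cuspidal classes.
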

\begin{proof}
   Since $X$ has finite volume, we have an inclusion of sheaves
   \begin{equation*}
       A_{(p)}(\E)\hookrightarrow A_{(2)}(\E),
   \end{equation*}
   for any $p\geq 2$. We have the following commutative diagram of inclusions of sheaves
   \begin{equation*}
       \begin{tikzcd}
\W^0C(\E) \arrow[r, hook] \arrow[d, hook] & A_{(p)}(\E) \arrow[d, hook] \\
\W^{-\rho+\varepsilon}C(\E) \arrow[r]                 & A_{(2)}(\E)    .            
\end{tikzcd}
   \end{equation*}
   When $p$ is sufficiently large, the upper arrow becomes a quasi-isomorphism, and therefore it suffices to show that
   \begin{equation*}
       \W^0H^*(X,\E)\cong H^*_{(p)}(X,\E)\to \overline{H}^*_{(2)}(X,\E)
   \end{equation*}
   is surjective. By \cite[Lemma 3.10]{bergeron2006proprietes}, for every $L^2$-harmonic form $\omega$ on a Riemannian manifold of nonpositive and bounded below sectional curvature, there exists $c_\omega>0$, such that
   \begin{equation*}
       ||\omega||_{\infty}\leq c_\omega ||\omega||_{2}.
   \end{equation*}
   Since $X$ has finite volume, $L^\infty$-forms are $L^p$ for every $p>0$, and therefore $\omega\in A_{(p)}(\hX,\E)$.
\end{proof}
 
\begin{Remark}
    Via spectral decomposition, one can also show that 
    \begin{equation*}
            \im(\W^{-\rho+\varepsilon}H^*(X,\E)\to W^{-\rho} H^*(X,\E))=\H^*_{(2)}(X,\E),
    \end{equation*}
   see \cite[Proposition A.3]{nair2023automorphic}. One may rephrase the result above as 
   \begin{equation*}
       \H^*_{(2)}(X,\E)=\im (\W^0H^*(X,\E)\to \W^{-\rho}H^*(X,\E)).
   \end{equation*}
   However, to obtain this, it is necessary to modify our proof via replacing $A_{(2)}(\E)$ by some relative Lie algebra complexes as in \cite{nair2023automorphic}. In the Hermitian setting we concern, this is not necessary, via Zucker's conjecture.
\end{Remark}

\section{Algebraic Theory of Weighted Cohomology}\label{section: Algebraic Theory of Weighted Cohomology}
\subsection{Introduction}
In this section, let $D=G(\R)/KA_G$ be a \textit{connected Hermitian symmetric space}. We assume either that $G$ is semisimple, or that $G$ is reductive, while the Hermitian structure arises from a Shimura datum. Let $\Gamma\subseteq G(\Q)$ be a neat arithmetic subgroup, and $X=\Gamma\backslash G(\R)/KA_G$ be the associated connected Hermitian locally symmetric variety. Let $E$ be an irreducible representation of $G$ defined over $\Q$. The assumptions above provide a polarizable admissible variation of $\Q$-Hodge structures $\E^H$ on $X$ of certain weight $w\in \Z$, see \cite[\S 4]{zucker1981locally}, or \cite[1.4, 1.12, 1.13]{pink1989arithmetical} for these slightly different assumptions. Then it underlies a Hodge module $\E^H$ on $X$ of weight $w$.  

We denote by $\X$ its Baily--Borel compactification, which is a projective variety (\cite[Theorem 10.11]{baily1966compactification}). Then there is a canonical projection from the reductive Borel--Serre compactification $\pi:\hX\to \X$. We will discuss in more detail the Baily--Borel compactification in \Cref{subsection:BB}.

The main results of this section are due to Nair \cite[Theorem 4.3.1, Proposition 4.4.1]{nair2012mixed}, specializing to particular weights.

\begin{Theorem}\label{Theorem: Lie-Hodge correspondence}
\begin{itemize}
    \item [(i.)] There are canonical isomorphisms
    \begin{equation*}
        \pi_*\W^{-\rho+\varepsilon}C(\E)\cong \pi_*\W^{-\rho}C(\E)\cong \rat(IC_{\X}(\E^H));
    \end{equation*}
    \item [(ii.)] There is a canonical isomorphism
    \begin{equation*}
        \pi_*\W^0C(\E)\cong \rat(w_{\leq \underline{\dim}+w}IC_{\X}(\E^H));
    \end{equation*}
    \item [(iii.)] The isomorphisms above are compatible:
    \begin{equation*}
    \begin{tikzcd}
\pi_*\W^0C(\E) \arrow[r] \arrow[d]                   & \pi_*\W^{-\rho+\varepsilon}C(\E) \arrow[d] \\
\rat(w_{\leq \underline{\dim}+w}IC_{\X}(\E^H)) \arrow[r] & \rat(IC_{\X}(\E^H))       .                   
\end{tikzcd}
\end{equation*}
\end{itemize}

In particular, these weighted cohomology groups carry canonical mixed Hodge structures, and the connecting maps are morphisms of mixed Hodge structures.
\end{Theorem}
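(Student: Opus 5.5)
I would argue stratum by stratum on the Baily--Borel compactification $\X=\coprod_F \X_F$, by induction on the strata, comparing both sides through their restrictions to strata. The key input is a Lie--Hodge dictionary. For a boundary stratum $X_F$ of $\X$ (attached to a maximal rational parabolic $Q_F$), the fibre of $\pi$ over a point of $X_F$ is a union of reductive Borel--Serre strata $X_P$ with $P\subseteq Q_F$. The stalk of $\pi_*\W^\eta C(\E)$ there is computed, by \cite[(17.2)]{goresky1994weighted} together with the Leray/Kostant description, as cohomology of $\Gamma_{L,\ell}$ with coefficients in $H^\bullet(\n_P,E)_{\geq\eta}$. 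On the Hodge side, $i_F^*j_*\E^H$ is a smooth mixed Hodge module on $X_F$ whose underlying local system is built from $H^\bullet(\Gamma_{L,\ell}, H^\bullet(\n_{Q_F},E))$. This is the theorem of Harris--Zucker/Burgos--Wildeshaus/Looijenga--Rapoport that the Hodge structure on $H^q(\n_{Q_F},E)$ is of weight determined by the character of the central torus $S_{Q_F}$ (the "weight cocharacter" of the boundary Shimura datum). The dictionary to establish is this: a summand of $H^q(\n_{Q_F},E)$ on which $S_{Q_F}$ acts by $\chi$ carries Hodge weight $w+q+c\langle\chi,\varpi\rangle$ for a fixed positive multiple $c$. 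Consequently, truncating by $\chi\geq 0$ (resp. $\chi>-\rho$) matches truncating by Hodge weight $\leq w+\dim$ of the local stratum (resp. by the Morel bound $\leq w+d_{\X}$ after the relevant shifts).

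Given the dictionary, I would build the isomorphisms in (i) and (ii) inductively. Over the open stratum $X$ everything is $\E$. Suppose the isomorphism is known on $U=\X-\overline{X_F}$ for a minimal-dimensional remaining stratum $X_F$. Then both $\pi_*\W^\eta C(\E)$ and the Hodge-theoretic object are characterized as the unique extension of their restriction to $U$ satisfying a local condition at $X_F$:
\begin{itemize}
\item on the weighted-cohomology side, attaching along $X_F$ is the truncation $\tau$ by $S_{Q_F}$-weight of $i_F^*Rj_{U*}$, in the sense of GHM's "Deligne-type" description of weighted complexes on $\X$;
\item on the Hodge side, it is $w_{\leq a_F}$ applied to $i_F^*j_{U*}$, by the gluing description of $w_{\leq\aa}$ in \Cref{subsection: Weight Truncations and Weightless Cohomology}.
\end{itemize}
Since both truncations cut $i_F^*j_{U*}$ along the same direct-sum decomposition (the $S_{Q_F}$-isotypic decomposition is a splitting of the Hodge weight filtration on smooth modules, by \Cref{weight truncation for smooth MHM}), the two extensions agree after $\rat$. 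Canonicity follows because each step is the unique factorization through a $t$-structure truncation. The upper/lower middle identification in (i) then reduces to checking that no $S_{Q_F}$-weight lies exactly at the middle value on Hermitian boundary strata, which is \cite[(23.2)]{goresky1994weighted}, combined with \Cref{Morel Intersection}. For (ii), the same argument applies with $a_F=\dim X_F+w$. Since the truncation is applied to $j_*\E^H$ already truncated at generic weight, and $w_{\leq\d+w}\circ w_{\leq d_{\X}+w}=w_{\leq \d+w}$, we may write $IC_{\X}$ in place of $j_*$.

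For (iii), both horizontal maps are induced from the natural inclusion of truncations ($\geq 0$ inside $>-\rho$, and $w_{\leq\d+w}$ into $w_{\leq d_{\X}+w}$). The inductive construction respects these adjunction maps, so commutativity reduces to the open stratum, where it is the identity. The final claim then follows by applying $a_{\X*}$.

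The main obstacle is the dictionary itself, and in particular its normalization. One has to verify that the weight shift is exactly linear in the $S_{Q_F}$-character with the correct sign and scaling, so that the thresholds $0$ and $-\rho$ land precisely at $\dim X_F+w$ and $d_{\X}+w$, including the shift by $[\dim]$ from \Cref{smooth stalk}. A second difficulty is that the fibres of $\pi$ involve several $P\subseteq Q_F$, not only $Q_F$. I would first try to handle this by Kostant's theorem, reducing to the $S_{Q_F}$-component, since only the $S_{Q_F}$ part (the unique noncompact direction transverse to $X_F$ for Hermitian $G$) matters for the weight. Presumably this is exactly where Nair's argument is flawed, so this step requires care.
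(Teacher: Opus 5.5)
There is a genuine gap at the gluing step, which is the heart of the theorem. You argue that because both truncations ``cut $i_F^*j_{U*}$ along the same direct-sum decomposition'', the two extensions agree after $\rat$, and that canonicity comes from unique factorization through a $t$-structure truncation. To extend $\alpha_U$ over $X_F$, you must show that the composite
\[
i_F^*\pi_*\W^\eta C(\E)\to i_F^*j_{U*}L_U\xrightarrow{\,i_F^*j_{U*}\alpha_U\,}\rat(i_F^*j_{U*}K_U)\to \rat(w_{>a_F}\,i_F^*j_{U*}K_U)
\]
vanishes as a morphism of complexes. For canonicity, you must also control maps from $i_F^*\pi_*\W^\eta C(\E)$ to $\rat(w_{>a_F}\,i_F^*j_{U*}K_U)[-1]$. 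A weight dictionary gives neither, for the following reasons.
\begin{itemize}
\item $S_{Q_F}$ does not act on the Hodge-side object, so there is no common decomposition that the specific isomorphism $i_F^*j_{U*}\alpha_U$ is known to respect. \Cref{weight truncation for smooth MHM} is only the pointwise weight criterion.
\item Pieces of different weight can be isomorphic as local systems, so an arbitrary comparison isomorphism can mix them. On complexes there are moreover cross terms invisible on cohomology sheaves.
\item Hodge-theoretic orthogonality cannot rescue this, because $\rat$ is not full. Vanishing of $\Hom$ in $D^b\MHM$ between weights $\le a$ and $>a$ says nothing about maps of the underlying constructible complexes: already $\Q(0),\Q(-1)\in\MHM(pt)$ have underlying $\Hom=\Q$.
\item The universal property of $w_{\le\underline{a}}$ lives in $D^b\MHM$, where $\pi_*\W^\eta C(\E)$ does not yet live.
\item Harris--Zucker/Burgos--Wildeshaus describe $i_F^*j_*\E^H$ from the open stratum. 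They do not describe $i_F^*j_{U*}K_U$, where $K_U$ is already truncated along intermediate strata.
\end{itemize}
These are exactly the inferences the paper flags in Nair's argument (\Cref{Flaw of Nair's proof}). Your (iii) has the same defect: restriction to $X$ is not faithful. Reducing to the open stratum would require showing that $\Hom(\pi_*\W^0C(\E),\, i_*i^!\rat(IC_{\X}(\E^H)))$ vanishes, which you do not address.

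The missing idea is an operator that acts compatibly on the underlying $\Q$-complexes of both sides and whose eigenvalues detect both kinds of weight. The paper uses Looijenga's local Hecke operator $\Phi$ attached to a divisible $a\in A(\Q)$.
\begin{itemize}
\item By \cite{goresky1994weighted}, $i_S^*L_d$ is the summand of $L_S$ with $\Phi$-eigenvalues $\le 1$.
\item The induction carries translation- and Hecke-equivariance of $\alpha_{d-1}$ along, so $i_S^*j_{d*}\alpha_{d-1}$ is $\Phi$-equivariant.
\item Looijenga's $q$-endomorphism argument shows pointwise that eigenvalue $\le 1$ corresponds to Hodge weight $\le w$. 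This handles the iterated truncation $K_{d-1}$ with no stalk formula.
\item In the $\Phi$-equivariant stable $\infty$-category, distinct eigenvalue blocks are orthogonal (\Cref{Jordan Decomposition}). This kills the cross map, makes $\alpha_d$ unique up to contractible choice, and yields the compatibilities (ii)$_d$--(iv)$_d$.
\end{itemize}
Your dictionary could at most replace the $q$-endomorphism step. Without some equivariance of this kind propagated through the induction, the argument does not close.
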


Together with the theory of weightless cohomology (\Cref{weightless and intersection}), we obtain the following result.
\begin{Corollary}\label{Image of MHS}
The canonical map $\W^0H^k(X,\E)\to IH^k(\X,\E)$ defined by the inclusion of weighted complexes
\begin{equation*}
    \W^{0}C(\E)\hookrightarrow \W^{-\rho+\varepsilon}C(\E)
\end{equation*}
is a morphism of mixed Hodge structures, and the image is identified with the top weight quotient $Gr^W_{k+w}\W^0H^k(X,\E)$.
\end{Corollary}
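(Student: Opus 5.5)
The plan is to deduce the statement by applying hypercohomology on $\X$ to the commutative square in \Cref{Theorem: Lie-Hodge correspondence}(iii), and then invoking \Cref{weightless and intersection}.

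\textbf{Step 1: identify the groups.} Since $\pi:\hX\to\X$ is proper and the weighted complexes are complexes of fine sheaves, we have $\mathbb{H}^k(\hX,\W^\eta C(\E))=\mathbb{H}^k(\X,\pi_*\W^\eta C(\E))$. Combined with parts (i) and (ii) of \Cref{Theorem: Lie-Hodge correspondence}, this gives
\begin{equation*}
\W^0H^k(X,\E)\cong \mathbb{H}^k(\X,w_{\leq\d+w}IC_{\X}(\E^H)),\qquad \W^{-\rho+\varepsilon}H^k(X,\E)\cong IH^k(\X,\E).
\end{equation*}
Let $j:X\hookrightarrow\X$ be the inclusion. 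By \Cref{Morel Intersection}, $IC_{\X}(\E^H)=w_{\leq d_X+w}j_*\E^H$. Since $\d\leq d_X$ componentwise, the composition rule for weight truncations gives $w_{\leq\d+w}IC_{\X}(\E^H)=w_{\leq\d+w}j_*\E^H$. The left-hand group is therefore $EH^k(\X,\E)$ in the notation of \Cref{weightless and intersection}. Both sides now come from $D^b\MHM(\X)$ via $a_{\X*}$, so they carry mixed Hodge structures, and this is how the canonical mixed Hodge structure on $\W^0H^k$ is defined.

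\textbf{Step 2: the map is a morphism of MHS.} The inclusion $\W^0C(\E)\hookrightarrow\W^{-\rho+\varepsilon}C(\E)$ induces a map on hypercohomology. Under the square in part (iii), this map corresponds to $\mathbb{H}^k$ of the canonical adjunction morphism $w_{\leq\d+w}IC_{\X}(\E^H)\to IC_{\X}(\E^H)$, which is a morphism in $D^b\MHM(\X)$. Applying $H^k a_{\X*}$ gives a morphism of mixed Hodge structures, and it agrees with the map on weighted cohomology because of the commutativity in (iii).

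\textbf{Step 3: identify the image.} \Cref{weightless and intersection} applies because $\X$ is projective, hence proper. It says that $EH^k$ has weights $\leq k+w$, and that the image of $EH^k\to IH^k$ is canonically $Gr^W_{k+w}EH^k$. The underlying reason is that $IH^k$ is pure of weight $k+w$, so the map kills $W_{k+w-1}$, and the injectivity of the induced map on $Gr^W_{k+w}$ comes from that proposition. Transporting through the isomorphisms of Step 1 gives the claim.

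\textbf{Main obstacle.} The only delicate point is making sure the identifications are compatible. Specifically, the map on weighted cohomology must equal the one induced by the adjunction morphism of weight truncations. The rational structure on $\W^0H^k$ must also be the one transported through (ii). Both of these are exactly what \Cref{Theorem: Lie-Hodge correspondence}(iii) provides, so granting that theorem, the rest is formal.
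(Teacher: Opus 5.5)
Your proposal is correct and follows the paper's own argument: the paper derives this corollary directly from \Cref{Theorem: Lie-Hodge correspondence}, whose square (iii) identifies the map on weighted cohomology with $\mathbb{H}^k$ of the canonical morphism $w_{\leq \d+w}IC_{\X}(\E^H)\to IC_{\X}(\E^H)$, combined with \Cref{weightless and intersection}. Your additional check via \Cref{Morel Intersection} that $w_{\leq\d+w}IC_{\X}(\E^H)=w_{\leq\d+w}j_*\E^H$ is exactly the bookkeeping needed to match that proposition's notation.
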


\begin{Corollary}\label{Injection}
    The canonical map $\W^0H^k(X,\E)\to IH^k(\X,\E)$ of weighted cohomology induces an injection of mixed Hodge structures
    \begin{equation*}
        Gr^W_{k+w}\W^0H^k(X,\E)\hookrightarrow IH^k(\X,\E).
    \end{equation*}
\end{Corollary}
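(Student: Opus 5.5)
This is a formal consequence of \Cref{Image of MHS}; the plan is only to make the factorization explicit. Write $\phi:\W^0H^k(X,\E)\to IH^k(\X,\E)$ for the canonical map. Via \Cref{Theorem: Lie-Hodge correspondence}, applying hypercohomology on $\X$ to the compatible square (iii.) identifies $\phi$ with the map
\begin{equation*}
    EH^k(\X,\E)=\mathbb{H}^k(\X,w_{\leq\underline{\dim}+w}IC_{\X}(\E^H))\to IH^k(\X,\E).
\end{equation*}
This map is induced by the canonical morphism of mixed Hodge modules, so it is a morphism of mixed Hodge structures. Here I use that $\mathbb{H}^k(\hX,-)=\mathbb{H}^k(\X,\pi_*-)$. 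I also use that the weight truncation of $j_*\E^H$ agrees with that of $IC_{\X}(\E^H)$, which holds because $w_{\leq\underline{\dim}+w}\circ w_{\leq d_X+w}=w_{\leq \underline{\dim}+w}$ together with \Cref{Morel Intersection}.

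Next I use weights. By \Cref{weightless and intersection}, $EH^k$ has weights $\leq k+w$. Since $\X$ is projective, $IH^k(\X,\E)$ is pure of weight $k+w$. Morphisms of mixed Hodge structures are strictly compatible with $W$, so $\phi$ kills $W_{k+w-1}EH^k$. Hence $\phi$ factors through
\begin{equation*}
    \bar\phi: Gr^W_{k+w}\W^0H^k(X,\E)=EH^k/W_{k+w-1}EH^k\to IH^k(\X,\E).
\end{equation*}

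It remains to show $\bar\phi$ is injective, that is, $\ker\phi=W_{k+w-1}$. This is exactly the content of \Cref{weightless and intersection} as restated in \Cref{Image of MHS}: the image of $\phi$ is isomorphic to $Gr^W_{k+w}EH^k$. Concretely, take the exact sequence $0\to\ker\phi\to EH^k\to\im\phi\to 0$ of mixed Hodge structures and apply the exact functor $Gr^W_{k+w}$. Since $\im\phi$ is pure of weight $k+w$, this gives $Gr^W_{k+w}\ker\phi\to Gr^W_{k+w}EH^k\to \im\phi\to0$. The identification in \Cref{weightless and intersection} forces $Gr^W_{k+w}\ker\phi=0$, so $\ker\phi\subseteq W_{k+w-1}$, and the reverse inclusion was shown above. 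Thus $\bar\phi$ is an injection of mixed Hodge structures.

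The main obstacle I expect is not this diagram chase but verifying that the identification in \Cref{weightless and intersection} really is given by the same map $\phi$. That is, the analytic inclusion $\W^0C\hookrightarrow\W^{-\rho+\varepsilon}C$ must match the Hodge-theoretic morphism $w_{\leq\underline{\dim}+w}IC\to IC$ after $\pi_*$. This is precisely the compatibility (iii.) of \Cref{Theorem: Lie-Hodge correspondence}, which I take as given.
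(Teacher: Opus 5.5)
Your proposal is correct and follows the paper's route: transport $\W^0H^k(X,\E)\to IH^k(\X,\E)$ through the compatible square (iii) of \Cref{Theorem: Lie-Hodge correspondence} to the Hodge-theoretic map $w_{\leq\underline{\dim}+w}IC\to IC$, then read off injectivity on $Gr^W_{k+w}$ from \Cref{weightless and intersection}. The paper treats this as immediate from \Cref{Image of MHS} with no written proof; your explicit check that $\ker = W_{k+w-1}$ and your reconciliation of $w_{\leq\underline{\dim}+w}j_*\E^H$ with $w_{\leq\underline{\dim}+w}IC_{\X}(\E^H)$ via \Cref{Morel Intersection} only spell out what the paper leaves implicit.
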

 
 The key input is the correspondence, on each stratum, between the Lie algebra weights appearing in weighted cohomology and the Hodge-theoretic weights, via \textit{Looijenga's local Hecke operators}, see \Cref{subsection: local Hecke}. It is necessary to formalize our proof in the setting of stable $\infty$-categories rather than triangulated categories, see  \Cref{why infinity cats}.

Following a standard reduction as in \cite{looijenga19882} and \cite{goresky1994weighted}, We can and will assume in this section that $G$ is a reductive algebraic group such that $G^{ad}$ is $\Q$-simple, which will simplify the stratification of $\X$ and the description of the local Hecke operators.

\subsection{The Baily--Borel Compactification}\label{subsection:BB}
We follow the summary in \cite[22]{goresky1994weighted}, and one can find more detail in \cite[III]{ash2010smooth}. The description there for the Baily--Borel compactification $\X$ is only valid when $G^{ad}$ is $\Q$-simple.

The \textit{Baily--Borel compactification} of $X=\Gamma\backslash D$ is a normal projective variety with an open embedding $j:X\hookrightarrow \X$. As a topological space, $\X=\Gamma\backslash D^{BB}$, where $D^{BB}$ is set-theoretically the disjoint union of $D$ and its rational boundary components, equipped with the Satake topology. The action of $G(\Q)$ on $D$ extends to a continuous action on $D^{BB}$, and the stabilizer of each boundary component $F$ is a maximal proper rational parabolic subgroup $Q\subseteq G$ (when $F=D$, the stabilizer is $G$). 

Fix a minimal rational parabolic subgroup $P_0$ of $G$, and denote by $Q_1,Q_2,...Q_r$ the rational maximal parabolic subgroups of $G$ containing $P_0$. For each maximal parabolic subgroup $Q$, let $N_Q$ be the unipotent radical of $Q$ and $W_Q$ the center of $N_Q$. Then there is a total ordering $\prec$ on the set $\{Q_1,...,Q_r\}$, by
\begin{equation*}
    Q_i\prec Q_j\ \ \mathrm{iff}\ \  W_i\subseteq W_j\ \ \mathrm{iff}\ \  F_j\subseteq \overline{F_i}.
\end{equation*}
For convenience, we assume that $Q_0:=G\prec Q_1\prec ...Q_r$.

The more precise relation between a boundary stratum $F$ and the stabilizer $Q$ is as follows. The Levi factor $M:=Q/N_Q$ of $Q$ admits a decomposition as an almost direct product (commuting factors with finite intersections)
\begin{equation*}
    M=M_l\cdot A\cdot M_h,
\end{equation*}
where $A=A_Q\cong \G_m$ is the $\Q$-split center of $M$, and $M_h$ is the centralizer of the center $W=Z(U_Q)$ in $M$. The boundary stratum $F$ is then identified with $M_h(\R)/K_{M_h}$, the symmetric space of $M_h(\R)$. It is of Hermitian type. When $G^{ad}$ is $\Q$-simple, so is $M_h^{ad}$. 

For a neat arithmetic subgroup $\Gamma\subseteq G(\Q)$, there are neat arithmetic subgroups associated to the subgroups and quotients of $G$, either by taking the intersection or taking the quotient. Recall that (see \Cref{subsection: Special Forms and Weighted Cohomology})
\begin{equation*}
    \Gamma_Q:=Q(\R)\cap \Gamma_Q, \Gamma_{N_Q}:=N_Q(\R)\cap \Gamma, \Gamma_M:=\Gamma_Q/\Gamma_{N_Q}.
\end{equation*}
Furthermore, we define
\begin{equation*}
    \Gamma_{M_l}:= \Gamma_M\cap M_l(\Q), \Gamma_{M_h}:=\Gamma_M/\Gamma_{M_l}.
\end{equation*}
It turns out that the stratum $S$ of $\X$ covered by $F$ is then a smooth Hermitian locally symmetric space $S=\Gamma_{M_h}\backslash F= \Gamma_{M_h}\backslash M_h(\R)/K_{M_h}$. In this section, we shall use a coarser stratification as follows.

For each $d\in \{1,2,...,r\}$, let $D^{BB}_{\leq d}$ be the union of $D$ with the rational boundary components corresponding to maximal parabolic subgroups conjugate to $Q_i$ for some $i\leq d$. Let $\X_{\leq d}:=\Gamma\backslash D^{BB}_{\leq d}$. This defines a filtration
\begin{equation*}
    X=\X_{\leq0}\subseteq \X_{\leq 1}\subseteq...\subseteq \X_{\leq r}=\X.
\end{equation*}
by Zariski open dense subsets with $X_d:=\X_{\leq d}-\X_{\leq d-1}$ is a finite disjoint union of smooth Hermitian locally symmetric spaces $S$'s above. Then $\X=\coprod_{d=0,...,r}X_d$ is a stratification. We denote by $j_d:\X_{\leq d-1}\hookrightarrow \X_{\leq d}$ the open immersion, and $i_d:X_d\hookrightarrow \X_{\leq d}$ the closed complement. 

The relation to the reductive Borel--Serre compactification $\hX=\Gamma\backslash\widehat{D}$ (see \Cref{subsection: Special Forms and Weighted Cohomology}) is as follows. There is a \textit{unique} continuous projection $\widehat{D}\to \overline{D}^{BB}$ extending the identity map on $D$. The projection is $G(\Q)$-equivariant. Then it defines a projection 
\begin{equation*}
    \pi: \hX\twoheadrightarrow \X.
\end{equation*}
For each parabolic subgroup $P=Q_{i_1}\cap Q_{i_2}\cap...Q_{i_s}$ containing $P_0$, where $i_1<i_2<...<i_s$ the associated stratum $X_P\subseteq \hX$ projects to $S_{Q_{i_s}}\subseteq \X$. We call $Q$ the \textit{associated maximal parabolic subgroup} of $P$. We also define the filtration on $\hX$
\begin{equation*}
    X=\hX_{\leq 0}\subseteq \hX_{\leq 1}\subseteq...\subseteq \hX_{\leq r}=\hX,
\end{equation*}
where $\hX_{\leq d}:=\pi^{-1}(\X_{\leq d})$.

\subsection{Local Hecke Operators}\label{subsection: local Hecke}
In this subsection, we introduce the theory of local Hecke operators. First we will introduce, for each boundary stratum $S$ of $\X$, the local Hecke correspondence. This is a geometric correspondence of a neighborhood of $S$ in $\X$ leaving $S$ fixed pointwise. It will induce an endomorphism of the (Hodge) weight-truncation of the concerned sheaves on $S$. Then we will follow \cite[\S 15]{goresky1994weighted} to introduce the local Hecke operator on (Lie) weighted complexes. The direct image of the operator along the projection $\pi:\hX\to \X$ agrees with the one induced by the local Hecke correspondence.

\

$\bullet$ \textbf{Local Hecke correspondences.}

Let $S$ be a boundary stratum of $\X$, and $F$ be the corresponding stratum of $D^{BB}$ with stabilizer $Q$. Let $N_Q$ be the unipotent radical of $Q$, $M$ the Levi factor, and $W$ the center of $N_Q$. Let $A$ be the $\Q$-split center of $M$. By the classification of the irreducible Hermitian symmetric domains (\cite[Proposition III.2.8]{ash2010smooth}), $G^{ad}$ must be either type $BC_r$ or type $C_r$. One concludes from the root decomposition that 
\begin{itemize}
    \item [(i.)] The commutator $[N_Q, N_Q]\subseteq W$, therefore both $W$ and $V:=N_Q/W$ are vector-space groups, in particular, both are abelian;
    \item[(ii.)] The adjoint action of $A\cong \G_m$ on the lie algebra $Lie(W(\R))$ is given by a character $\chi^2\in X^*(A)$, and $A$ acts by $\chi\in X^*(A)_\Q$ on $Lie(V(\R))$.
\end{itemize}
\begin{Remark}
     When $G$ is $\Q$-simply connected, $\chi\in X^*(A)$ is integral.
\end{Remark}

Fix a lift $A\subseteq N_Q$. By \cite[3.6]{looijenga19882}, there exists $a\in A(\Q)$ such that $\chi(a)\in \Z_{>0}$ and $a\Gamma_Q a^{-1}\subseteq \Gamma_Q$. Such elements are called \textit{divisible}. The following lemma follows from the definition of the Satake topology on $D^{BB}$.

\begin{Lemma}\label{Good nbd of S}
    Fix a divisible element $a\in A(\Q)$. There exists a closed neighborhood $B_F$ of $F$ in $\textrm{Star}(F)$ such that
    \begin{itemize}
    \item [(i.)] $B_F$ is invariant under $N_Q(\R)$ and $\Gamma_Q$;
    \item [(ii.)] The restriction to $B_S:=\Gamma_Q\backslash B_F$ of the map $\Gamma_Q\backslash \textrm{Star}(F)\to \X$ is an embedding, identifying $B_S$ with a closed neighborhood of $S$ in $\textrm{Star}(S)$;
    \item [(iii.)] The map $\varphi(\Gamma_Qx):=\Gamma_Qax$ defines an endomorphism of $B_S$ fixing $S$ pointwise.
\end{itemize}
\end{Lemma}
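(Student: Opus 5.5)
The plan is to build $B_F$ from the Satake-topology basis of neighborhoods of $F$, using horospherical coordinates attached to $Q$, and then shrink it so that it is stable under the contraction by $a$. Recall that, via the decomposition $D\cong A(\R)^0\times e_Q$ with $e_Q\cong (M_l\cdot M_h)(\R)/K\times N_Q(\R)$, the set $\textrm{Star}(F)$ (the union of $F$ and of all boundary components whose closure contains $F$, together with $D$) has a basis of Satake neighborhoods of $F$. These are sets of the form
\begin{equation*}
    U_{t,C}=\{x : \chi(a(x))>t,\ (\text{$M_l$- and $V$-coordinates of }x)\in C\cdot N_Q(\R)\text{-translates}\}\cup (\text{boundary parts}),
\end{equation*}
with $t\gg0$ and $C$ compact modulo $\Gamma_{M_l}$ in the $M_l$-direction. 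The precise shape does not matter. What matters is that these sets are invariant under $N_Q(\R)\cdot M_h(\R)\cdot\Gamma_Q$, and that they shrink toward $F$ as $t\to\infty$. My first step is to define $B_F$ as the closure in $\textrm{Star}(F)$ of such a set with $t$ large. Property (i) then follows from the invariance just described. The $\Gamma_Q$-invariance uses that $\Gamma_Q$ normalizes $N_Q$ and acts on the $M$-coordinates through $\Gamma_M$, which preserves the $A$-coordinate and the $M_l$-cusp condition.

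For (ii), I will invoke the standard reduction-theory fact used in the construction of $\X$ (Baily--Borel, or \cite[III]{ash2010smooth}). For $t$ sufficiently large, any $\gamma\in\Gamma$ with $\gamma B_F\cap B_F\neq\emptyset$ lies in $\Gamma_Q$; this is the Siegel finiteness property combined with the fact that the normalizer of $F$ is $Q$. Consequently $\Gamma_Q\backslash B_F\to\Gamma\backslash D^{BB}$ is injective. It is also closed: properness follows from the compactness of $\Gamma_Q\backslash$ of the "horizontal'' slices, and from the fact that Satake neighborhoods map onto neighborhoods of $S$. So the map is a homeomorphism onto a closed neighborhood of $S$ contained in the image of $\textrm{Star}(F)$.

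For (iii), I act by the divisible element $a$. Since $a\in A(\Q)$ centralizes $M$, it fixes $F$ pointwise, because $F$ is the symmetric space of $M_h$ and $A$ acts trivially there. On $N_Q$, the element $a$ acts by $\chi$ on $V$ and by $\chi^2$ on $W$. On the $A$-coordinate it acts by multiplication, and since $\chi(a)\ge1$ (indeed $\chi(a)\in\Z_{>0}$, so we may take $\chi(a)>1$) it increases the coordinate $\chi(a(x))$ and leaves the $M_l$-coordinate unchanged. Hence $aB_F\subseteq B_F$. The map $\Gamma_Qx\mapsto\Gamma_Qax$ is well defined because $a\Gamma_Qa^{-1}\subseteq\Gamma_Q$ gives $a\gamma x=(a\gamma a^{-1})ax\in\Gamma_Q ax$. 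It fixes $S$ pointwise by the remark above. Continuity follows from the continuity of the $G(\Q)$-action on $D^{BB}$.

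The main obstacle is (ii), specifically the injectivity statement that $\Gamma$-translates of a small Satake neighborhood meet only through $\Gamma_Q$. Here I intend to quote the corresponding result from the Baily--Borel theory of the Satake topology, rather than reprove it. I also have to check that the neighborhoods are chosen compatibly with the $a$-invariance, which is why I take the $A$-coordinate condition as the defining one.
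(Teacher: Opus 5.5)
Your route is the paper's. There, (i) and (ii) are quoted from \cite[p.~5]{looijenga19882}, and (iii) comes from shrinking $B_F$ so that it is stable under the geodesic action of $\{b\in A(\R)^0:\chi(b)\ge 1\}$. Your elementary observations in (iii) are correct and are what stands behind that citation: well-definedness from $a\Gamma_Qa^{-1}\subseteq\Gamma_Q$, $a$ centralizing $M$ and so fixing $F$ pointwise, the rescaling of $N_Q$ absorbed by $N_Q(\R)$-invariance, and $\chi(a)\ge1$ pushing the $A$-coordinate up.

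The gap is the concrete $B_F$ you run the argument on. It is a $\Q$-rank-one picture and fails (ii) in general.
\begin{itemize}
\item Take the Klingen parabolic $Q$ of $\mathrm{Sp}_4$ stabilizing an isotropic line $\ell_1$; here $M_l$ is trivial and $M_h\cong\mathrm{SL}_2$. Any $N_Q(\R)M_h(\R)$-invariant Satake neighborhood of a point of $F$ contains a full horoball $\{x\in D:\chi(a(x))>t\}$, since $M_h(\R)$ is transitive on $F$ and preserves the $A$-coordinate. Choose $\gamma\in\Gamma$ preserving a rational Lagrangian $L\supset\ell_1$ with $\ell_2:=\gamma^{-1}\ell_1\neq\ell_1$. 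Points $x$ deep in the corner attached to $L$ lie in the horoballs of both $\ell_1$ and $\ell_2$. Hence $x,\gamma x\in B_F$ with $\gamma\notin\Gamma_Q$. Since $\Gamma$ is neat and acts freely on $D$, $\gamma x\notin\Gamma_Qx$, so injectivity in (ii) fails. Your claim that $\gamma B_F\cap B_F\neq\emptyset$ forces $\gamma\in\Gamma_Q$ fails for the same reason.
\item When $M_l$ is $\Q$-isotropic, e.g.\ the $0$-dimensional cusp of $\mathrm{Sp}_4$, the opposite problem occurs. Confining the $M_l$-coordinate to a set compact modulo $\Gamma_{M_l}$ discards sequences like $\mathrm{Im}\,Z=\mathrm{diag}(y_1,y_2)$ with $y_1,y_2\to\infty$ and $y_1/y_2\to\infty$. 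These converge to $F$ in the Satake topology, so your set is not a neighborhood of $F$.
\item Your properness argument also breaks: $\Gamma_Q\backslash\{\chi(a(x))=t\}$ is noncompact as soon as $S$ is noncompact or $M_l$ is $\Q$-isotropic.
\end{itemize}

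The genuine neighborhoods must thin out toward the cusps of $M_l$ and of $S$. So they are only $N_Q(\R)\Gamma_Q$-invariant, exactly as (i) says, and they are not cut out by the $A$-coordinate alone. What (iii) really needs is that such a neighborhood can still be chosen stable under the geodesic action of $\{\chi\ge1\}$. That is the nontrivial input, \cite[p.~5 (b)]{looijenga19882}, and it is what the paper invokes. Once you take $B_F$ from there, your argument goes through unchanged: left multiplication by $a$ is the geodesic action of $a$ composed with an $N_Q(\R)$-translation, which gives $aB_F\subseteq B_F$.
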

Here the star of a stratum is the union of all strata containing it in their closure.
\begin{proof}
    For the first two statements see \cite[p.5]{looijenga19882}. The last statement follows by the fact that one can shrink $B_F$ so that it is invariant under the geodesic action of the semigroup $\{a\in A(\R)^0|\chi(a)\geq 1\}$ (see \cite [p.5 (b)]{looijenga19882}).
\end{proof}

\begin{Definition}\label{global Hecke correspondence}
For $g\in G(\Q)$ and $\Gamma'\subseteq \Gamma\cap g^{-1}\Gamma g$ of finite index, the \textit{Hecke correspondence} is the pair of finite morphisms
\begin{equation*}
\begin{aligned}
    (c_1, c_2): X':=\Gamma'\backslash D&\rightrightarrows X,\\
    c_1(\Gamma'x)&:=\Gamma x\\
    c_2(\Gamma'x)&:= \Gamma gx.
\end{aligned}
\end{equation*}
\end{Definition}

They extend uniquely to a finite correspondence $(c_1,c_2):X^{'BB}\rightrightarrows \X$,
which preserves the filtration $(c_1,c_2):X_{\leq d}^{'BB}\rightrightarrows \X_{\leq d}$ for each $d$.

We now define local Hecke correspondence. Let $S$ be a component boundary stratum corresponding to the maximal parabolic $Q$. Let $a\in A(\Q)$ be a divisible element, $\Gamma':=\Gamma\cap a^{-1}\Gamma a$. Since $a\Gamma_Q a^{-1}\subseteq \Gamma_Q$, we have $\Gamma'_Q=\Gamma_Q$.

\begin{Definition}\label{def: Local hecke correspondence}
     By \Cref{Good nbd of S}, the Hecke correspondence restricts to the neighborhood $B_S$ of $S$ a correspondence
\begin{equation*}
    (c_1, c_2): B_S\rightrightarrows B_S,
\end{equation*}
where $c_1$ is the identity map, and $c_2=\varphi$. We call it a \textit{local Hecke correspondence} of $S$ (with respect to $a$).
\end{Definition}

\

$\bullet$ \textbf{Local Hecke operators on the Hodge weight-truncations.}

Let $E$ be an irreducible representation of $G$ defined over $\Q$. Then the associated local system $\E$ on $X$ underlies a polarizable admissible variation of $\Q$-Hodge structures $\E^H$. For a Hecke correspondence $(c_1, c_2):X_{\Gamma'}\rightrightarrows X_{\Gamma}$, we have $c_1^!\E^H_\Gamma=\E^H_{\Gamma'}=c_2^*\E^H_{\Gamma}$. 

We denote by $J_{d-1}:X\hookrightarrow \X_{\leq d-1}$ and $j_d:\X_{\leq d-1}\hookrightarrow \X_{\leq d}$ be the open immersions. Then we obtain a morphism of variation of mixed Hodge structures on $\X_{\leq d-1}$:
\begin{equation*}
    c_{1*}c_2^*J_{d-1,*}\E^H\to J_{d-1,*}\E^H
\end{equation*}
via the adjunction of
\begin{equation*}
    c_2^*J_{d-1,*}\E^H\to J'_{d-1,*}c_2^*\E^H=J'_{d-1,*}c_1^!\E^H\to c_1^!J_{d-1,*}\E^H,
\end{equation*}
where the two arrows are given by the base change. 

\begin{Proposition}\label{Hecke correspondence-Hodge truncation}
Let $\aa=(a_0,...a_{d-1})\in \Z^d$. Denote by $K_{d-1}:=w_{\leq \aa}J_{d-1,*}\E^H$ the variation of mixed Hodge structures on $\X_{\leq d-1}$.
Then there is a unique map $c_{1*}c_2^*K_{d-1}\to K_{d-1}$ such that the following diagram commutes
    \begin{equation*}
        \begin{tikzcd}
c_{1*}c_2^*K_{d-1} \arrow[r] \arrow[d] & K_{d-1} \arrow[d] \\
{c_{1*}c_2^*J_{d-1,*}\E^H} \arrow[r]   & {J_{d-1,*}\E^H}  .
\end{tikzcd}
    \end{equation*}
\end{Proposition}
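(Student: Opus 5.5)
The plan is to use the weight $t$-structure and show that $c_{1*}c_2^*$ preserves the subcategory $\wD{\leq \aa}$. The bottom arrow then lifts through the truncation $K_{d-1}=w_{\leq\aa}J_{d-1,*}\E^H\to J_{d-1,*}\E^H$ by the universal property of truncation.

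For existence and uniqueness, use the exact triangle $K_{d-1}\to J_{d-1,*}\E^H\to w_{>\aa}J_{d-1,*}\E^H$. Set $T:=c_{1*}c_2^*K_{d-1}$. Applying $\Hom(T,-)$ gives the exact sequence
\begin{equation*}
\Hom(T,w_{>\aa}J_{d-1,*}\E^H[-1])\to \Hom(T,K_{d-1})\to \Hom(T,J_{d-1,*}\E^H)\to \Hom(T,w_{>\aa}J_{d-1,*}\E^H).
\end{equation*}
If $T\in\wD{\leq\aa}(\X_{\leq d-1})$, both outer terms vanish by the $t$-structure axioms, since $w_{>\aa}(\cdot)[-1]$ still lies in $\wD{>\aa}$; here one checks that $\wD{>\aa}$ is stable under $[-1]$, which holds because the weight condition is imposed on every $H^i$. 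Then the composite $T\to c_{1*}c_2^*J_{d-1,*}\E^H\to J_{d-1,*}\E^H$ factors uniquely through $K_{d-1}$. This yields exactly the commutative square, with uniqueness.

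So the main step is to show $c_{1*}c_2^*K_{d-1}\in\wD{\leq\aa}(\X_{\leq d-1})$. By the gluing description, it suffices to check that $i_e^*c_{1*}c_2^*K_{d-1}\in\wD{\leq a_e}(X_e)$ for each stratum $X_e$, $e\le d-1$. Both $c_1$ and $c_2$ are finite and preserve the stratification: $c_k^{-1}(X_e)=X'_e$ for $k=1,2$, since the extended Hecke correspondence respects the filtration. Proper base change for the finite map $c_1$ gives
\begin{equation*}
i_e^*c_{1*}c_2^*K_{d-1}\cong c_{1*}c_2^*i_e^*K_{d-1},
\end{equation*}
with $c_1,c_2:X'_e\to X_e$ now finite étale. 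Each $X_e$ is a disjoint union of locally symmetric varieties, and the restricted maps are the Hecke correspondences of the boundary strata, which are finite étale between smooth varieties of the same dimension.

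On a smooth variety, $\wD{\leq a}$ for smooth objects is characterized pointwise by \Cref{weight truncation for smooth MHM}(ii). The smoothness of $i_e^*K_{d-1}$ should follow from the constructibility of $J_{d-1,*}\E^H$ along the Baily--Borel stratification together with \Cref{weight truncation for smooth MHM}(i). For étale $c_2$, the stalk of $c_2^*M$ at $x'$ equals the stalk of $M$ at $c_2(x')$, so the pointwise weights are preserved. For finite étale $c_1$, the stalk of $c_{1*}N$ at $x$ is $\bigoplus_{x'\mapsto x}N_{x'}$. Its pointwise weights are therefore again bounded by $a_e-\dim X_e$, using $\dim X'_e=\dim X_e$. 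Alternatively, one can cite \Cref{Functors compatible with weights}: $c_2^*$ preserves weight $\le$, and $c_{1*}=c_{1!}$ preserves weight $\le$ because $c_1$ is finite. Combined with exactness of $c_{1*}$ and $c_2^*$ on $\MHM$ for finite étale maps, up to the dimension shift, this preserves each $H^i$ having weights $\le a_e$.

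I expect the main obstacle to be bookkeeping rather than substance. One point is to verify that the extended correspondence really maps strata to strata compatibly, so that base change applies stratum by stratum; this is needed because $c_1$ may not be étale across strata on $\X_{\leq d-1}$. The other is to make sure the restricted objects are smooth, so that the pointwise criterion applies.
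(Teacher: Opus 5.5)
Your proof is correct and is essentially the paper's argument, which cites Morel's Lemma 5.1.3. Both rest on the vanishing of $\Hom$ from $\wD{\leq\aa}$ into every shift of $\wD{>\aa}$. The paper goes through the adjunction $c_{1*}=c_{1!}\dashv c_1^!$ and checks that $c_2^*$ preserves $\wD{\leq\aa}$ and $c_1^!$ preserves $\wD{>\aa}$; by orthogonality this is equivalent to your check that $c_{1*}c_2^*$ preserves $\wD{\leq\aa}$. Your second justification (exactness of the finite étale $c_{1*}$ and $c_2^*$ on each stratum together with \Cref{Functors compatible with weights}) suffices by itself, so you can drop the smoothness bookkeeping needed for the pointwise criterion.
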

\begin{proof}(sketch.)
    This is \cite[Lemma 5.1.3]{morel2008complexes}. It requires the following facts: 
    \begin{itemize}
        \item [$\bullet$] $c_2^*F\in \wD{\leq \aa}$ for $F\in \wD{\leq \aa}$ and $c_1^!G\in \wD{>\aa}$ for $G\in \wD{> \aa}$. This holds since both $c_1$ and $c_2$ are finite maps of stratified spaces;
        \item [$\bullet$] For $F\in \wD{\leq \aa}$ and $G\in \wD{> \aa}$, one has $R\Hom(F, G)=0$. This is because $(\wD{\leq \aa}, \wD{>\aa})$ is a t-structure stable under shifts.
    \end{itemize}
   Applying these two facts to the weight truncation of $J_{d-1,*}\E^H$, one obtains the the map and the uniqueness follows.
\end{proof}

Let $i_S:S\hookrightarrow X_d=\X_{\leq d}-\X_{\leq d-1}$ be a component stratum. Fix a divisible element $a\in A(\Q)$, and let $B_S\subseteq \X_{\leq d}$ be a neighborhood of $S$ as in \Cref{Good nbd of S}. Consider the Hecke correspondence with respect to $a$, and the map $c_{1*}c_2^*K_{d-1}\to K_{d-1}$ defined in \Cref{Hecke correspondence-Hodge truncation}. Since $c_1$ reduces to the identity map on $B_S$ and $c_2=\varphi$, this map induces a canonical map
\begin{equation}\label{phi-equivariant, Hodge}
\varphi^*(j_{d*}K_{d-1}|_{B_S})\to j_{d*}K_{d-1}|_{B_S}.
\end{equation}
Strictly speaking, this is the composition of a canonical map
\begin{equation*}
    \varphi^*(j_{d*}K_{d-1}|_{B_S})\to j_{d*}c_{1*}c_2^*K_{d-1}|_{B_S}
\end{equation*}
obtained by base changes, and 
\begin{equation*}
    j_{d*}c_{1*}c_2^*K_{d-1}|_{B_S}\to j_{d*}K_{d-1}|_{B_S}
\end{equation*}
induced by the map in \Cref{Hecke correspondence-Hodge truncation}.

\begin{Definition}\label{Local Hecke: Hodge}
    For $a\in A(\Q)$ divisible, we define the \textit{local Hecke operator} $\Phi$ as the endomorphism on $i_S^*j_{d*}K_{d-1}$ to be the restriction to $S$ of the map (\ref{phi-equivariant, Hodge}). 
\end{Definition}

This makes sense since $\varphi|_S=id$.

\

$\bullet$ \textbf{Local Hecke operators on weighted complexes.}

Now we turn to the Lie-theoretic side. Let $S\in X_{d}=\X_{\leq d}-\X_{\leq d-1}$ be a component stratum, corresponding to a maximal parabolic subgroup $Q$.  Let $X_Q$ be the boundary stratum in $\hX$. It is then a maximal stratum in $\hX_{\leq d}-\hX_{\leq d-1}$, and its closure in $\hX_{\leq d}$ is $\pi^{-1}(S)$. Let $\widehat{j_d}:\hX_{\leq d-1}\hookrightarrow \hX_{\leq d}$ be the open immersion.

For a weight profile $\eta$ (see \Cref{subsection: Parabolic Subgroups and Weight Profiles}), let 
\begin{equation*}
    L_{d-1}:=\pi_*\W^{\eta}C(\E)|_{\X_{\leq d-1}}.
\end{equation*}
For any $g\in G(\Q)$, It is not hard to show that there is a canonical morphism $c_{1*}c_2^*L_{d-1}\to L_{d-1}$ similar to \Cref{Hecke correspondence-Hodge truncation}. For a lift $A\subseteq N_Q$, and a divisible element $a\in A(\Q)$, one can define similarly a local Hecke operator $\Phi$ on $i_S^*j_{d*}L_{d-1}$. There is a useful characterization of this local Hecke operator given in \cite[\S 15, \S 16]{goresky1994weighted}, which we summarize in the following way.

Let $\E_\C$ be the complexification of $\E$. Consider the sheaf 
\begin{equation*}
    B:=\widehat{j_d}_*^0(\W^\eta C(\E_\C)|_{\hX_{\leq d-1}}).
\end{equation*}

This underived pushforward sheaf represents the derived pushforward $\widehat{j_d}_*(\W^\eta C(\E_\C)|_{\hX_{\leq d-1}})$ in the derived category. Similar to the construction of special differential forms (see \Cref{subsection: Special Forms and Weighted Cohomology}), there exists a quasi-isomorphic subcomplex of special differential forms $B_{sp}\subseteq B$, see \cite[(16.8)]{goresky1994weighted}. Moreover, there is an action of $A(\Q)$ on the restriction to some neighborhood of $\pi^{-1}(S)$ in $\hX_{\leq d}$ of the sheaf $B^{sp}$, via the action on the Koszul complexes, see \cite[(15.2), (16.12)]{goresky1994weighted}. When restricting to $\pi^{-1}(S)$, this action agrees with local Hecke operators in the sense of \cite[(15.4), (15.5)]{goresky1994weighted}. (Note that on the reductive Borel--Serre compactification, one allows more elements than divisible ones to act on the sheaves).

Fix a divisible element $a\in A(\Q)$. Then there is an endomorphism of the sheaf
    \begin{equation*}
    i_S^*j_{d*}L_{d-1,\C}\cong\pi_*(B^{sp}|_{\pi^{-1}(S)})
    \end{equation*}
by the direct image of the action of $a$ restricting to $B_{sp}|_{\pi^{-1}(S)}$.

Since the strata in $\pi^{-1}(S)$ are of the form $X_P$ for $P\subseteq Q$, hence $A(\Q)\subseteq N_Q(\Q)\subseteq N_P(\Q)$, divisibility of $a$ implies that
    \begin{equation*}
        \Gamma'_P:=a^{-1}\Gamma_Pa=\Gamma_P.
    \end{equation*}
By \cite[(15.5)]{goresky1994weighted}, this action agrees with local Hecke operator $\Phi$ arising from the local Hecke correspondence near $S$.

\subsection{Proof of the Main Result}
In this subsection we prove \Cref{Theorem: Lie-Hodge correspondence}. The first part is \cite[(23.2)]{goresky1994weighted}, therefore it suffices to prove (ii.) and (iii.). We restrict ourselves to the weight profile $\eta=0$, even though the proof is valid verbatim for any weight profile; In particular, the proof is valid verbatim for (i.). 

We first summarize the notations from the previous subsections, and then outline the proof. 

Let 
\begin{equation*}
    X=\X_{\leq0}\subseteq \X_{\leq 1}\subseteq...\subseteq \X_{\leq r}=\X.
\end{equation*}
be the filtration on $\X$ defined in \Cref{subsection:BB}. For each $d\in \{0,1,...,r\}$, let $j_d:\X_{\leq d-1}\hookrightarrow \X_{\leq d}$ be the open immersion, and $i_d:X_d=\X_{\leq d}-\X_{\leq d-1}\hookrightarrow \X_{\leq d}$ be the closed complement. Let $S\subseteq X_d$ be a component stratum, corresponding to a maximal parabolic subgroup $Q$. Denote by $i_S:S\hookrightarrow \X_{\leq d}$ the closed immersion. 

Set
\begin{equation*}
\begin{aligned}
    L_d&:=\pi_*\W^0C(\E)|_{\X_{\leq d}},\\
    K_d&:=w_{\leq \underline{\dim}+w}j_*\E^H|_{\X_{\leq d}},\\
    L_S&:=i_S^*j_{d*}L_{d-1},\\
    K_S&:=i_S^*j_{d*}K_{d-1}.  
\end{aligned}
\end{equation*}
Let $A$ be a lift of the split center of $Q$, and fix a divisible element $a\in A(\Q)$. Then we obtain local Hecke operators $\Phi$ on $i_S^*L_d$ and $i_S^*K_d$. 

Finally, for any $g\in G(\Q)$ and $\Gamma'\subseteq\Gamma\cap g^{-1}\Gamma g$ of finite index, there is a Hecke correspondence 
\begin{equation*}
    (c_1,c_2): X_{\leq d}^{'BB}:=(\Gamma'\backslash D)^{BB}_{\leq d}\rightrightarrows \X_{\leq d}.
\end{equation*}
Sheaves on $X_{\leq d}^{'BB}$ will be decorated with a prime $'$, e.g., $L_d', K_d'$.

We will prove by induction on $d$ for the following statements:

\begin{itemize}
    \item [(i.)$_{d}$] There is a canonical isomorphism $\alpha_d:L_d\cong \rat(K_d)$;

\item [(ii.)$_{d}$] The isomorphism $\alpha_d$ is compatible with translations: There is a commutative diagram 
\begin{equation*}
    \begin{tikzcd}
c^*L_d \arrow[d, "c^*(\alpha_d)"] \arrow[r] & L_d' \arrow[d, "\alpha'_d"] \\
\rat(c_2^*K_d) \arrow[r]                              & \rat(K_d'),   
\end{tikzcd}
\end{equation*}
for any $g\in G(\Q)$ and $\Gamma'\subseteq\Gamma\cap g^{-1}\Gamma g$ of finite index, here $c:=c_2$;

\item [(iii.)$_d$] The isomorphism $\alpha_d$ is Hecke equivariant: There is a commutative diagram 
\begin{equation*}
    \begin{tikzcd}
c_{1*}c_2^*L_d \arrow[d, "c_{1*}c_2^*(\alpha_d)"] \arrow[r] & L_d \arrow[d, "\alpha_d"] \\
\rat(c_{1*}c_2^*K_d) \arrow[r]                              & \rat(K_d);   
\end{tikzcd}
\end{equation*}

\item [(iv.)$_d$] The isomorphism $\alpha_d$ is weight compatible: There is a commutative diagram
\begin{equation*}
\begin{tikzcd}
L_d \arrow[r] \arrow[d, "\alpha_d"] & (\pi_*\W^{-\rho+\varepsilon}C(\E))|_{\X_{\leq d}} \arrow[d, "\beta_{d}"] \\
\rat(K_d) \arrow[r]                 & IC(\E)|_{\X_{\leq d}}    ,           \end{tikzcd}
\end{equation*}
where $\beta: \pi_*\W^{-\rho+\varepsilon}C(\E)\cong IC(\E)$ is the canonical isomorphism in (i.) of \Cref{Theorem: Lie-Hodge correspondence}.
\end{itemize}

The induction step goes as follows. First, the result in \cite{goresky1994weighted} implies that $L_{S,\C}$ admits a $\Phi$-invariant direct sum decomposition such that $i_S^*L_{d,\C}$ is a direct summand as a $\Phi$-invariant subspace. On the other side, the result in \cite{looijenga1991weights} implies that the Hodge weights in $\rat(K_S)$ correspond to the weights of $\Phi$. Together with the induction step (i.)$_{d-1}$ and (iii.)$_{d-1}$, one can show that the Lie weights match with Hodge weights, which implies (i.)$_d$. Second, (i.)$_d$, (ii.)$_{d-1}$ and (iii.)$_{d-1}$ imply (ii.)$_d$. Third, (ii.)$_d$ is an intermediate step to (iii.)$_d$. Finally, (i.)$_d$ and (iii.)$_{d-1}$ imply (iv.)$_d$. In Step 1 below, we follow the strategy in \cite{nair2012mixed}, with a slightly different narration. Step 2 and Step 3 differ from Nair's original proof.

\

$\bullet$ \textbf{Setup.}

We start with $d=0$, by choosing an isomorphism $\alpha_0:L_0=\E\cong \rat(\E^H)=K_0$ on $X$. Such an isomorphism is unique up to a scalar. It is Hecke equivariance, since both Hecke actions come from the same representation $E$. 

Assume now that $d\geq 1$ and (i.)$_{d-1}$-(iv.)$_{d-1}$ hold.

\

$\bullet$ \textbf{Step 1: Local weights correspondences.}

We begin with the weighted complex side. The following proposition is the splitting of the weighted complexes, see \cite[Proposition 16.4]{goresky1994weighted}.

\begin{Proposition}\label{Splitting Property of weighted complex}
There is a $\Phi$-invariant direct sum decomposition of 
\begin{equation*}
    L_{S,\C}\cong\bigoplus_{\lambda\in \Q} (L_{S,\C})_\lambda,
\end{equation*}
such that $\Phi$ acts as rational scalar $\lambda$ on each component. The exact triangle
\begin{equation*}
    i_S^*L_{d,\C}\to L_{S,\C}\to i_S^*i_{d*}i_d^!L_{d,\C}[1]
\end{equation*}
is split and isomorphic to
\begin{equation*}
    L_{S,\C,\leq}:=\bigoplus_{\lambda\leq 1}(L_{S,\C})_\lambda\to L_{S,\C}\to L_{S,\C,>}:=\bigoplus_{\lambda>1} (L_{S,\C})_\lambda
\end{equation*}
\end{Proposition}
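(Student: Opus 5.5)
The plan is to compute $L_{S,\C}$ explicitly through special differential forms and read off the action of $a$ from the Koszul complexes. By construction $L_{S,\C}\cong\pi_*(B_{sp}|_{\pi^{-1}(S)})$, where $B_{sp}\subseteq\widehat{j_d}^0_*(\W^0C(\E_\C)|_{\hX_{\leq d-1}})$ is the quasi-isomorphic subcomplex of special forms from \cite[(16.8)]{goresky1994weighted}. The restriction of $B_{sp}$ to a stratum $X_P\subseteq\pi^{-1}(S)$, with $P\subseteq Q$, is given by forms on $X_P$ with values in $C^\bullet(\n_P,E)$, truncated according to the profile $0$ only in the directions of $\Delta_P$ that do not come from $Q$.

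The torus $A=A_Q\cong\G_m$ sits inside $A_P$ for every such $P$, so it acts semisimply on each of these Koszul complexes. This action commutes with the $L_P$-structure away from $A$, with the differentials, and with the restriction maps between strata. Decomposing by $A$-weight therefore gives a direct sum decomposition of the complex of sheaves $B_{sp}|_{\pi^{-1}(S)}$ into subcomplexes $\bigoplus_\chi(B_{sp})_\chi$. Since the action of the divisible element $a$ agrees with $\Phi$ by \cite[(15.5)]{goresky1994weighted}, the element $a$ acts on the $\chi$-summand by the scalar $\chi(a)$. Because $\chi(a)\in\Z_{>0}$ and the $A$-weights lie in $X^*(A)_\Q$, after normalizing $\Phi$ these scalars become rational powers $\lambda$ of $\chi(a)$. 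Applying $\pi_*$ then yields the $\Phi$-stable decomposition $L_{S,\C}\cong\bigoplus_\lambda(L_{S,\C})_\lambda$.

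Next I identify $i_S^*L_{d,\C}$ inside $L_{S,\C}$. Over $\pi^{-1}(S)$, the sheaf $\W^0C(\E_\C)$ differs from $B_{sp}$ exactly by imposing the additional truncation $\geq0$ in the $Q$-direction, namely on the $A_Q$-weights. This is the description of the weighted complex as an iterated truncation in \cite[(14.1),(16.4)]{goresky1994weighted}. Hence $i_S^*L_{d,\C}=\pi_*$ of the summand with $A_Q$-weight $\geq0$. Under the chosen normalization of $\Phi$, that summand is exactly $\bigoplus_{\lambda\leq1}(L_{S,\C})_\lambda$. The restriction map $i_S^*L_{d,\C}\to L_{S,\C}$ is then the inclusion of this direct summand.

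Because the triangle $i_S^*L_d\to i_S^*j_{d*}j_d^*L_d\to i_S^*i_{d*}i_d^!L_d[1]$ is exact, its third term is identified with the cone of that inclusion, which is the complementary summand $\bigoplus_{\lambda>1}$. The triangle is therefore split.

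The main obstacle is showing that the $A$-weight decomposition is compatible with the differentials of $B_{sp}$ across strata, so that the summands really are subcomplexes of sheaves and not just a stalkwise splitting. A closely related difficulty is pinning down the sign and normalization convention: the paper reverses Borel's sign convention, so one must check that the weight condition "$\geq0$" corresponds to $\lambda\leq1$ rather than $\lambda\geq1$. I would settle the first point using the $A$-equivariance of the geodesic retractions in \cite[(15.2),(16.12)]{goresky1994weighted}, and the second by checking the case of the trivial representation on the open stratum.
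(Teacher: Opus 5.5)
Your proposal takes essentially the same route as the paper. The paper also identifies $L_{S,\C}$ with $\pi_*(B_{sp}|_{\pi^{-1}(S)})$ and takes two facts from \cite[(16.13), (16.11)]{goresky1994weighted}: the honest $\Phi$-eigen-splitting of $B_{sp}|_{\pi^{-1}(S)}$, and the identification of $\W^0C(\E_\C)|_{\pi^{-1}(S)}$ with its $\lambda\leq 1$ summand. Those references are where the cross-strata compatibility and the sign convention you flag are settled, and the paper then pushes forward along $\pi$ and reads off the third term as the complementary summand, as you do.

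One small correction: no normalization of $\Phi$ is needed. The eigenvalues are rational because they are values $\psi(a)$ of genuine characters $\psi\in X^*(A)$ at $a\in A(\Q)$; rational powers of the integer $\chi(a)$ would not be rational in general.
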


\begin{proof}(sketch.)
Via the concrete representative $B_{sp}$ of $\widehat{j_d}_*(\W^\eta C(\E_\C)|_{\hX_{\leq d-1}})$ (see \Cref{subsection: local Hecke}), it is shown in \cite[\S 16]{goresky1994weighted} that there is an honest direct sum decomposition of sheaves on $\pi^{-1}(S)$ (\cite[16.13]{goresky1994weighted})
\begin{equation*}
    B_{sp}|_{\pi^{-1}(S)}=\bigoplus_{\lambda}(B_{sp}|_{\pi^{-1}(S)})_{\lambda},
\end{equation*}
    such that the local Hecke operator acts by the scalar $\lambda$ on each component. It follows that these eigenvalues $\lambda$ are rational numbers, since they are all values of certain rational characters of $A$ at the divisible element $a\in A(\Q)$. Moreover, the weighted complex is a subsheaf of $B_{sp}$ as the direct summand (\cite[16.11]{goresky1994weighted})
    \begin{equation*}
        \W^0C(\E_\C)|_{\pi^{-1}(S)}=\bigoplus_{\lambda\leq \chi(a)^0=1} (B_{sp}|_{\pi^{-1}(S)})_{\lambda}\subseteq B_{sp}|_{\pi^{-1}(S)}.
    \end{equation*}
This honest decomposition of sheaves defines a $\Phi$-equivariant direct sum decomposition in the stable $\infty$-category $D^b_c(\C_{\pi^{-1}(S)})$. Applying $\pi_*$ we obtain the decomposition of $\pi_*(B_{sp}|_{\pi^{-1}(S)})\cong L_{S,\C}$
with respect to $\Phi$
\begin{equation*}
    L_{S,\C}\cong \bigoplus_{\lambda\in \Q}(L_{S,\C})_\lambda,
\end{equation*}
    with 
    \begin{equation*}
        i_S^*L_{d,\C}\cong \pi_*(\W^0C(\E_\C)|_{\pi^{-1}(S)})\cong \bigoplus_{\lambda\leq 1}(L_{S,\C})_\lambda.
    \end{equation*}
\end{proof}

The following is a direct corollary to \cite[Corollary 16.5]{goresky1994weighted}.
\begin{Corollary}\label{Splitting: Global section}
    For any subset $Z\subseteq S$ the action of $\Phi$ on the hypercohomology $\mathbb{H}^i(Z, j_{d*}L_{d-1})$ is semisimple over $\Q$, and applying $\mathbb{H}^i(Z,-)$ to the exact triangle
    \begin{equation*}
        L_{d}\to j_{d*}L_{d-1}\to i_{d*}i_d^!L_d[1]
    \end{equation*}
    gives a short exact sequence
    \begin{equation*}
        0\to \mathbb{H}^i(Z, j_{d*}L_{d-1})_{\lambda\leq 1}\to \mathbb{H}^i(Z, j_{d*}L_{d-1})\to \mathbb{H}^i(Z, j_{d*}L_{d-1})_{\lambda>1}\to 0.
    \end{equation*}
\end{Corollary}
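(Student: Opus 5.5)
The plan is to derive the corollary from \Cref{Splitting Property of weighted complex} by applying the hypercohomology functor $\mathbb{H}^i(Z,-)$ to the split $\Phi$-equivariant decomposition. I will first work over $\C$. Since $i_S^*j_{d*}L_{d-1}=L_S$ and $\mathbb{H}^i(Z,j_{d*}L_{d-1})=\mathbb{H}^i(Z,L_S|_Z)$, the decomposition $L_{S,\C}\cong\bigoplus_{\lambda\in\Q}(L_{S,\C})_\lambda$ gives a $\Phi$-equivariant direct sum
\begin{equation*}
\mathbb{H}^i(Z,j_{d*}L_{d-1,\C})\cong\bigoplus_{\lambda}\mathbb{H}^i(Z,(L_{S,\C})_\lambda),
\end{equation*}
with $\Phi$ acting by the scalar $\lambda$ on each summand. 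This uses that hypercohomology is additive and that the decomposition holds in the stable $\infty$-category, so it is compatible with the endomorphism $\Phi$. Only finitely many $\lambda$ occur, because the weights come from the finitely many $A$-weights in the Koszul complexes. Hence $\Phi$ is diagonalizable over $\C$ with rational eigenvalues.

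Next I will descend to $\Q$. The operator $\Phi$ is defined on $\mathbb{H}^i(Z,j_{d*}L_{d-1})$ over $\Q$: the local Hecke operator comes from the correspondence on the rational weighted complex, and by \cite[(29.2)]{goresky1994weighted} the two constructions agree after base change to $\C$. An endomorphism of a finite-dimensional $\Q$-vector space that becomes diagonalizable over $\C$ with all eigenvalues in $\Q$ is already diagonalizable over $\Q$, because its minimal polynomial splits into distinct linear factors over $\Q$. So $\Phi$ is semisimple over $\Q$, and the subspaces $\mathbb{H}^i(Z,j_{d*}L_{d-1})_{\lambda\le 1}$ and $\mathbb{H}^i(Z,j_{d*}L_{d-1})_{\lambda>1}$ are defined over $\Q$ as sums of eigenspaces. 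Finiteness of dimension needs a small check: $L_S$ is constructible, so this holds for $Z$ compact or a finite union of strata. For general $Z$ I would either restrict to such $Z$ or argue via locally finite semisimplicity.

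For the short exact sequence, I will apply $\mathbb{H}^i(Z,-)$ to the triangle restricted to $Z$. By \Cref{Splitting Property of weighted complex}, after $i_S^*$ and over $\C$ this triangle is isomorphic to the split triangle $L_{S,\C,\le}\to L_{S,\C}\to L_{S,\C,>}$. The long exact sequence therefore breaks into split short exact sequences, and the first term is identified with the $\lambda\le1$ eigenpart and the last with the $\lambda>1$ part. Exactness and these identifications can then be checked after $\otimes\C$, since the maps are defined over $\Q$ and $\Phi$-equivariant.

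The main obstacle is making sure the identification of $i_S^*L_{d,\C}$ with the $\lambda\le 1$ summand is compatible with the map induced by the triangle, not merely an abstract isomorphism. Equivalently, the connecting morphisms in the long exact sequence must vanish. To handle this I will work directly with the honest sheaf decomposition of $B_{sp}|_{\pi^{-1}(S)}$, where $\W^0C$ is literally the subsheaf of the $\lambda\le1$ summands. I will then apply $\pi_*$ and $\mathbb{H}^i(Z,-)$ at the level of complexes, as in \cite[Corollary 16.5]{goresky1994weighted}.
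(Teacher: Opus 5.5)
Your proposal is correct and follows the paper's own route: you obtain the semisimplicity and the split short exact sequences over $\C$ from the Goresky--Harder--MacPherson splitting (\cite[(16.5)]{goresky1994weighted}, equivalently \Cref{Splitting Property of weighted complex}), and then descend to $\Q$ using that the eigenvalues are rational. Your concern about finite-dimensionality for general $Z$ is unnecessary, because the finite set of eigenvalues gives a polynomial $\prod_\lambda(t-\lambda)\in\Q[t]$ with distinct roots that kills $\Phi$ after $\otimes\C$, hence kills it over $\Q$, and this yields the eigenspace decomposition in any dimension.
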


\begin{proof}
We obtain the $\Phi$-semisimplicity and the short exact sequence above after tensor with $\C$ by \cite[(16.5)]{goresky1994weighted}. Since the eigenvalues are rational numbers by the divisibility of $a$, the claims hold over $\Q$.
\end{proof}

We prove that the decomposition in \Cref{Splitting Property of weighted complex} is defined over $D^b(\Q_S)$, using the following general lemma. It is a generalization of \cite[Lemme 3.2.5]{laumon2008lemme}. 

\begin{Lemma}\label{Jordan Decomposition}
   Let $E$ be a field, and $T$ be a discrete abelian group. Let $\CC$ be an $E$-linear stable $\infty$-category with a $t$-structure with heart $\AA=\CC^{\heartsuit}$. Set
   \begin{equation*}
       \CC^{T}:=Fun(BT,\CC),\ \  \AA^{T}:=Fun(BT,\AA).
   \end{equation*}
   Equip $\CC^{T}$ with its pointwise t-structure, so that $\AA^T\simeq \CC^{T,\heartsuit}$. For a character $\chi\in \Hom(T,E^{\times})$, let $\CC^{T}_{\chi}\subseteq \CC^{T}$ be the full subcategory consisting of objects $X$ such that for any $\gamma\in T$, $\gamma-\chi(\gamma)$ acts nilpotently on $X$. Likewise we define $\AA^{T}_{\chi}$. 

Let $(\CC^{T}_{fs})^b\subseteq \CC^T$ be the full subcategory of bounded $K$, such that for each $n\in \Z$, 
\begin{equation*}
    H^n(K)\in \bigoplus_{\chi}\AA^T_{\chi}.
\end{equation*}
Then there is a t-exact equivalence of categories
\begin{equation*}
    \bigoplus_{\chi}(\CC^{T}_\chi)^{b}\xrightarrow{\sim} (\CC^{T}_{fs})^b.
\end{equation*}
   
\end{Lemma}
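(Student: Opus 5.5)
The plan is to construct the functor, show it is fully faithful, and then show it is essentially surjective by induction on the amplitude of $K$, following the structure of \cite[Lemme 3.2.5]{laumon2008lemme}.

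\textbf{Construction and t-exactness.} The functor $\bigoplus_\chi(\CC^T_\chi)^b\to\CC^T$ sends a finite family $(K_\chi)$ to $\bigoplus_\chi K_\chi$. Its image lies in $(\CC^T_{fs})^b$ because the pointwise t-structure commutes with finite sums. Moreover, $H^n$ of an object of $\CC^T_\chi$ lies in $\AA^T_\chi$: nilpotence of $\gamma-\chi(\gamma)$ passes to cohomology, since $H^n$ is an $E$-linear functor. The same observation shows the functor is t-exact.

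\textbf{Full faithfulness.} It suffices to prove that $\mathrm{Map}_{\CC^T}(K_\chi,K_{\chi'})=0$ for bounded $K_\chi\in\CC^T_\chi$, $K_{\chi'}\in\CC^T_{\chi'}$ with $\chi\neq\chi'$. By dévissage along the finite Postnikov towers, one reduces to $K_\chi=A[a]$ and $K_{\chi'}=B[b]$ with $A\in\AA^T_\chi$ and $B\in\AA^T_{\chi'}$.

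Pick $\gamma$ with $\chi(\gamma)\neq\chi'(\gamma)$. The endomorphism $\gamma$ of $A[a]$ is an automorphism in $\CC^T$, and it acts on $\mathrm{Map}_{\CC^T}(A[a],B[b])$ by conjugation. Because $T$ is abelian, this action is $f\mapsto\gamma_B\circ f\circ\gamma_A^{-1}$. I need to justify that this action is homotopic to the identity. The reason is that, $T$ being abelian, the action of $\gamma$ on any object of $\mathrm{Fun}(BT,\CC)$ is a natural automorphism of the identity functor.

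Hence on each homotopy group $\pi_i\mathrm{Map}$ we have $\gamma_B f=f\gamma_A$. Writing $\gamma_A=\chi(\gamma)+n_A$ and $\gamma_B=\chi'(\gamma)+n_B$ with $n_A,n_B$ nilpotent, we get $(\chi'(\gamma)-\chi(\gamma))f=fn_A-n_Bf$. The right-hand operator $f\mapsto fn_A-n_Bf$ is nilpotent, because $n_A$ and $n_B$ commute with it and each is nilpotent. So $(\chi'(\gamma)-\chi(\gamma))$ times the identity equals a nilpotent operator on $\pi_i\mathrm{Map}$, which forces $\pi_i\mathrm{Map}=0$. Full faithfulness then follows from additivity.

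\textbf{Essential surjectivity.} I proceed by induction on the number of nonzero cohomology objects of $K$.

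For the heart, write $A=\bigoplus_\chi A_\chi$. This is given by the hypothesis on $H^n(K)$.

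For the inductive step, take $K\in(\CC^T_{fs})^b$ with top degree $n$ and use the triangle $\tau_{<n}K\to K\to H^n(K)[-n]$. By induction both ends split as direct sums over $\chi$. The connecting map $H^n(K)[-n]\to\tau_{<n}K[1]$ is then a morphism between two such sums. By the vanishing established in the full-faithfulness step, it is diagonal, i.e.\ a sum of maps $A_\chi\to K'_\chi$. Taking fibres componentwise gives $K\simeq\bigoplus_\chi\mathrm{fib}_\chi$, and each $\mathrm{fib}_\chi$ lies in $\CC^T_\chi$. To see the last point: the class of objects on which $\gamma-\chi(\gamma)$ is nilpotent is closed under extensions, because if $\gamma-\chi(\gamma)$ satisfies $u^m=0$ on the sub and on the quotient, then $u^{2m}=0$ on the extension.

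\textbf{Expected obstacle.} The main subtle point is making sense of "$\gamma-\chi(\gamma)$ acts nilpotently" and of the commuting action on mapping spaces in the $\infty$-categorical setting, rather than only on homotopy categories. I would work throughout at the level of the homotopy category $h\CC^T$, where $\gamma$ is an element of $\mathrm{End}(X)$ and nilpotence has a clear meaning. This suffices because the statement is an equivalence of full subcategories of $\CC^T$, whose essential images and mapping spaces are detected by this data.
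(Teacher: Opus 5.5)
Your proposal is correct and follows essentially the paper's route: orthogonality of the subcategories $\mathcal{C}^T_\chi$ for distinct characters, then a Postnikov induction in which the connecting maps are block-diagonal and each block's fibre stays in $\mathcal{C}^T_\chi$ by closure under extensions. You make explicit that each $\gamma$ is a natural automorphism of the identity of $\mathrm{Fun}(BT,\mathcal{C})$, which the paper uses only implicitly. Your orthogonality step differs slightly: you observe that $(\chi'(\gamma)-\chi(\gamma))\,\mathrm{id}$ equals the difference of the commuting nilpotent operators $f\mapsto fn_A$ and $f\mapsto n_Bf$, where the paper uses a B\'ezout identity; your preliminary reduction to hearts is harmless but unnecessary, since nilpotence in $h\mathcal{C}^T$ is already part of the definition of $\mathcal{C}^T_\chi$.
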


\begin{proof}(sketch.)
We only outline the proof, since each step is a formal verification.

(1.) $\CC^T_\chi$ is a stable full subcategory preserved by the pointwise truncations, and thus $(\CC^T_\chi)^b$ inherits a t-structure.

(2.) For $\chi\neq \chi'$, the mapping spectrum
    \begin{equation*}
     hom_{\CC^T}(X,Y)\simeq 0, \ \ X\in \CC^T_{\chi},\ \ Y\in \CC^T_{\chi'}.
    \end{equation*}
To see this, let $\gamma\in T$ be an element $\chi(\gamma)\neq \chi'(\gamma)$. Suppose that $(\gamma-\chi(\gamma))^p$ (resp. $(\gamma-\chi'(\gamma))^q$) kills $X$ (resp. $Y$). By Bézout's theorem, there exists $P(t), Q(t)\in E[t]$ such that
\begin{equation*}
    (t-\chi(\gamma))^p P(t)+(t-\chi'(\gamma))^qQ(t)=1.
\end{equation*}
Let $f\in \Hom_{\CC^T}(X,Y)$. Then 
\begin{equation*}
    f=(\gamma-\chi(\gamma))^p P(\gamma)f+f(\gamma-\chi'(\gamma))^qQ(\gamma)\simeq 0.
\end{equation*}
Since $\CC^T_\chi$ is stable under shifts, the mapping spectrum vanishes as well. 

(3.) Orthogonality in (2.) establishes
    \begin{equation*}
        hom_{\CC^T}(\bigoplus_{\chi}X_\chi,\bigoplus_{\chi'}Y_{\chi'})\simeq \prod_{\chi}hom_{\CC^T}(X_\chi, Y_\chi).
    \end{equation*}
This shows the fully faithfulness of the functor.

(4.) Essential surjectivity follows by a Postnikov induction. Let $K\in (\CC^T_{fs})^b$. The finite Postnikov tower of $K$ has layers of the form $H^n(K)[-n]$. By orthogonality (2.), all connecting morphisms are block diagonalizable with respect to characters $\chi$, and the tower reconstructs $K$ in each character block by (1.), which gives a decomposition (unique up to a contractible choice)
\begin{equation*}
    K\simeq \bigoplus_\chi K_\chi,\ K_\chi\in (\CC^T_\chi)^b,\ \ H^n(K_\chi)\simeq H^n(K)_\chi.
\end{equation*}

(5.) Taking cohomology commutes with finite direct sums, thus
\begin{equation*}
    H^n(K)\simeq H^n(\bigoplus_\chi K_{\chi})\simeq \bigoplus_\chi H^n(K_\chi)\simeq \bigoplus_\chi H^n(K)_\chi,
\end{equation*}
    hence the t-exactness.
\end{proof}

\begin{Corollary}\label{splitting is defined over Q}
    The $\Phi$-invariant decomposition of $L_{S,\C}$ in \Cref{Splitting Property of weighted complex} is defined over $\Q$:
    \begin{equation*}
        L_S\cong\bigoplus_{\lambda\in \Q}(L_S)_{\lambda},
    \end{equation*}
    with compatible isomorphisms
    \begin{equation*}
    \begin{aligned}
        i_S^*L_d&\cong L_{S,\leq}:=\bigoplus_{\lambda\leq 1}(L_{S})_\lambda,\\
        i_S^*i_{d*}i_d^!L_d[1]&\cong L_{S,>}:=\bigoplus_{\lambda>1}(L_S)_{\lambda}.
    \end{aligned} 
\end{equation*}
\end{Corollary}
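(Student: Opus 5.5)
We apply \Cref{Jordan Decomposition} with $E=\Q$, $T=\Z$ generated by the local Hecke operator $\Phi$, and $\CC=D^b_c(\Q_S)$ with its standard $t$-structure. The first task is to realize $L_S=i_S^*j_{d*}L_{d-1}$ as an object of $\CC^T=Fun(BT,\CC)$. By construction, $\Phi$ is the restriction to $S$ of the canonical map $\varphi^*(j_{d*}L_{d-1}|_{B_S})\to j_{d*}L_{d-1}|_{B_S}$. It comes from the Hecke morphism $c_{1*}c_2^*L_{d-1}\to L_{d-1}$, which is defined at the level of the stable $\infty$-category rather than only up to homotopy. Since $\varphi|_S=id$, this yields a single endomorphism of $L_S$ in $\CC$. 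An endomorphism of an object is exactly an action of $\Z$, that is, an object of $Fun(B\Z,\CC)$. No coherence issue arises, because $B\Z$ is free on one generator.

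Next we check that $L_S$ lies in $(\CC^T_{fs})^b$. Boundedness is clear, since $L_{d-1}$ is constructible and bounded and $i_S^*j_{d*}$ preserves this. For each $n$ we must show $H^n(L_S)\in\bigoplus_\chi \AA^T_\chi$, meaning that $\Phi$ acts on the constructible sheaf $H^n(L_S)$ locally finitely with generalized eigenvalues in $\Q$, and that the sheaf splits accordingly. We plan to verify this after base change to $\C$ and then descend. By \Cref{Splitting Property of weighted complex}, $L_{S,\C}\cong\bigoplus_{\lambda\in\Q}(L_{S,\C})_\lambda$, with $\Phi$ acting as the scalar $\lambda$ on each summand. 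Hence $H^n(L_S)\otimes\C$ decomposes into eigensheaves for finitely many rational eigenvalues $\lambda_1,\dots,\lambda_m$, and the polynomial $\prod_k(\Phi-\lambda_k)$ kills $H^n(L_S)\otimes\C$. Because this polynomial has rational coefficients and $H^n(L_S)\to H^n(L_S)\otimes\C$ is injective, it already kills $H^n(L_S)$. The Bézout idempotents $P_k(\Phi)$, which have rational coefficients, then split $H^n(L_S)$ in the abelian category of sheaves of $\Q$-vector spaces into pieces on which $\Phi-\lambda_k$ is nilpotent, in fact zero. Alternatively, \Cref{Splitting: Global section} gives the same semisimplicity over $\Q$ on stalks and sections. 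The lemma then produces $L_S\simeq\bigoplus_\lambda (L_S)_\lambda$ with $(L_S)_\lambda\in(\CC^T_\lambda)^b$, and base change to $\C$ recovers the decomposition of \Cref{Splitting Property of weighted complex} by uniqueness of the splitting.

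It remains to identify the pieces with $i_S^*L_d$ and $i_S^*i_{d*}i_d^!L_d[1]$. Consider the triangle $i_S^*L_d\to L_S\to i_S^*i_{d*}i_d^!L_d[1]$. Its first map is $\Phi$-equivariant, since $\Phi$ is defined on $i_S^*L_d$ compatibly. So the triangle lives in $\CC^T$, and all three terms lie in $(\CC^T_{fs})^b$ by the same argument. By the equivalence and the orthogonality of distinct character blocks, the triangle decomposes blockwise. After base change to $\C$, \Cref{Splitting Property of weighted complex} shows that the $\lambda$-block of $i_S^*L_d$ vanishes for $\lambda>1$, and the $\lambda$-block of the third term vanishes for $\lambda\le 1$. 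Vanishing of a bounded constructible complex can be checked after $\otimes\C$, so these vanishings hold over $\Q$. Orthogonality then forces $i_S^*L_d\cong L_{S,\leq}$ and $i_S^*i_{d*}i_d^!L_d[1]\cong L_{S,>}$, compatibly with the map to $L_S$.

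The main obstacle is the first step: realizing $\Phi$ as a genuine object of $Fun(BT,\CC)$ over $\Q$, compatibly with the triangle. This requires the Hecke map on $L_{d-1}$ and the base-change maps to be taken in the $\infty$-categorical sense, so that $\Phi$ on $i_S^*L_d$ and on $L_S$ are linked by an equivariant map and not merely one commuting up to unspecified homotopy.
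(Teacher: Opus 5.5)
Your proposal is correct and matches the paper's proof. Both apply \Cref{Jordan Decomposition} with $E=\Q$, $\CC=D^b_c(\Q_S)$ and $T=\Phi^{\Z}$, check rational semisimplicity of $\Phi$ on each $H^n(L_S)$, and use uniqueness of the block decomposition to identify its complexification with \Cref{Splitting Property of weighted complex}. Where you differ, you are only more detailed: the paper checks the hypothesis stalkwise via \Cref{Splitting: Global section} and constancy of eigenvalues along the local system, while you use the rational annihilating polynomial, and you spell out the blockwise splitting of the triangle that the paper dismisses as ``the same reasoning''.

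One small correction: an object of $Fun(B\Z,\CC)$ is an object with an \emph{automorphism}, not an arbitrary endomorphism. You should therefore note that $\Phi$ is invertible on $L_S$. This holds because all its eigenvalues are nonzero rationals, being values of characters of $A$ at the divisible element $a$.
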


\begin{proof}
By \Cref{Splitting: Global section}, for any $x\in S$, the action of $\Phi$ on
    \begin{equation*}
        H^n(L_S)_x=H^n(i_x^*L_S)=\mathbb{H}^n(\{x\}, j_{d*}L_{d-1})
    \end{equation*}
is semisimple over $\Q$. Since $S$ is a connected stratum, the cohomology $H^n(L_S)$ is then a local system of finite rank, and the eigenvalues of the $\Phi$-action on it are constant along $S$. Therefore we obtain sub-local systems 
\begin{equation*}
    H^n(L_S)_\lambda:=\ker(\Phi-\lambda: H^n(L_S)\to H^n(L_S)), \ \lambda\in \Q.
\end{equation*}
Fiberwise semisimplicity then gives a decomposition
\begin{equation*}
    H^n(L_S)=\bigoplus_{\lambda} H^n(L_S)_\lambda.
\end{equation*}
Applying \Cref{Jordan Decomposition} to $E=\Q$, $\CC=D^b_c(\Q_S)$, and $T=\Phi^\Z\cong \Z$, we obtain a decomposition
\begin{equation*}
    L_S\simeq \bigoplus_\lambda(L_S)_\lambda.
\end{equation*}
Since the decomposition is unique, the complexification of this decomposition has to agree with that one obtained in \Cref{Splitting Property of weighted complex}. The other two identifications follow by the same reasoning.
\end{proof}

Now we turn to the Hodge side. By induction hypothesis, we obtain the following decomposition of $K_S$ in the derived category of mixed Hodge modules. 

\begin{Corollary}\label{Decomposition: Hodge side}
    There is a direct sum decomposition in $D^b\MHM(S)$
    \begin{equation*}
        K_S\cong\bigoplus_{\lambda\in \Q} (K_S)_\lambda,
    \end{equation*}
where $\Phi-\lambda$ acts nilpotently on $(K_S)_\lambda$. Such a decomposition is unique (up to a contractible choice).
\end{Corollary}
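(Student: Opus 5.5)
The plan is to transport the decomposition of $L_S$ from \Cref{splitting is defined over Q} to the Hodge side. We use the induction hypotheses (i.)$_{d-1}$ and (iii.)$_{d-1}$, and then apply \Cref{Jordan Decomposition} inside $D^b\MHM(S)$.

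\textbf{Step 1: the operator on $\rat(K_S)$.} By (i.)$_{d-1}$ we have an isomorphism $\alpha_{d-1}:L_{d-1}\cong \rat(K_{d-1})$. Applying $i_S^*j_{d*}$ gives an isomorphism $L_S\cong \rat(K_S)$, since $\rat$ commutes with the six functors. The local Hecke operator on $K_S$ (\Cref{Local Hecke: Hodge}) is built from the map $c_{1*}c_2^*K_{d-1}\to K_{d-1}$ of \Cref{Hecke correspondence-Hodge truncation}, followed by base change and restriction to $B_S$. The operator on $L_S$ is built from the analogous map for $L_{d-1}$ in the same way. The Hecke equivariance (iii.)$_{d-1}$ says these two maps correspond under $\alpha_{d-1}$. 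Base change isomorphisms are compatible with $\rat$, so the induced isomorphism $L_S\cong\rat(K_S)$ intertwines the two operators $\Phi$.

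\textbf{Step 2: semisimple eigenvalues on cohomology.} By \Cref{splitting is defined over Q} and its proof, $\Phi$ acts semisimply with rational eigenvalues on each cohomology local system $H^n(L_S)$. Now $\rat\circ H^n\cong {}^pH^n\circ\rat$, and $\rat$ is faithful and exact on $\MHM(S)$. Hence for each polynomial $f$ the endomorphism $f(\Phi)$ of $H^n(K_S)\in\MHM(S)$ vanishes if and only if $\rat(f(\Phi))$ vanishes. Take $f=\prod_\lambda(\Phi-\lambda)$ over the finitely many eigenvalues. Then $f(\Phi)=0$ on $H^n(K_S)$. Bézout's identity, as in step (2) of \Cref{Jordan Decomposition}, gives idempotent projectors in $\Q[\Phi]$. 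Since $\MHM(S)$ is abelian, these split $H^n(K_S)=\bigoplus_\lambda H^n(K_S)_\lambda$, with $\Phi-\lambda$ nilpotent (in fact zero) on each summand. So $K_S$ lies in $(\CC^T_{fs})^b$ for $\CC$ the stable $\infty$-category enhancing $D^b\MHM(S)$ with its standard $t$-structure, $E=\Q$, and $T=\Phi^\Z$.

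\textbf{Step 3: lift to a decomposition of $K_S$.} Apply \Cref{Jordan Decomposition} to obtain $K_S\simeq\bigoplus_\lambda (K_S)_\lambda$ with $(K_S)_\lambda\in(\CC^T_\lambda)^b$. The same lemma gives uniqueness up to a contractible choice, because the equivalence is fully faithful with orthogonal blocks.

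\textbf{Main obstacle.} The delicate point is Step 1 together with the setup for Step 3. The lemma needs $K_S$ to be an object of $Fun(BT,\CC)$, that is, $\Phi$ must be an honest endomorphism in an $\infty$-categorical enhancement of $D^b\MHM(S)$, and not just a map in the homotopy category. Since $T\cong\Z$ is free, $BT$ is a circle. So a single endomorphism, i.e.\ a point of the mapping space, already defines a functor $BT\to\CC$, and no higher coherences are needed. The remaining verification is that the composite of base changes in (\ref{phi-equivariant, Hodge}) is compatible with $\rat$. I would check this using the standard compatibility of Saito's functors with their constructible counterparts.
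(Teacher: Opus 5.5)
Your proposal is correct and follows the paper's proof essentially step for step. You transport $\Phi$-semisimplicity with rational eigenvalues from the cohomology of $L_S$ to that of $\rat(K_S)$ via the $\Phi$-equivariant isomorphism coming from (i.)$_{d-1}$ and (iii.)$_{d-1}$, use faithfulness of $\rat$ on $\MHM(S)$ to obtain an annihilating polynomial and split each $H^n(K_S)$ by the Bézout/Lagrange projectors, and then invoke \Cref{Jordan Decomposition}. One small correction to your closing remark: a functor $B\Z\to\CC$ requires an automorphism, not merely an endomorphism. This is harmless here, because the eigenvalues of $\Phi$ are nonzero rationals (values of characters of $A$ at $a$), so $\Phi$ is invertible on every cohomology object and is therefore an equivalence.
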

    
\begin{proof}
    By induction hypothesis (i.)$_{d-1}$ and (iii.)$_{d-1}$, there is a Hecke equivariant isomorphism $\alpha_{d-1}:L_{d-1}\xrightarrow{\sim} \rat(K_{d-1})$. This gives a $\Phi$-equivariant isomorphism
    \begin{equation*}
        L_S\xrightarrow{\sim} \rat(K_S).
    \end{equation*}
   Since $H^n(K_S)$ is a smooth Hodge module, we have
   \begin{equation*}
       \rat(H^n(K_S))\cong H^{n-\dim(X_d)}(\rat(K_S))\cong H^{n-\dim(X_d)}(L_S).
   \end{equation*}
   We obtain a decomposition as in the proof of \Cref{splitting is defined over Q}
   \begin{equation*}
       \rat(H^n(K_S))=\bigoplus_{\lambda}\rat(H^n(K_S))_\lambda,
   \end{equation*}
   where $\Phi$ acts as the scalar $\lambda$ on each component. Let $\Lambda_n\subseteq \Q$ be the finite set of eigenvalues on $\rat(H^n(K_S))$, and set
   \begin{equation*}
       P_n(t):=\prod_{\lambda\in \Q}(t-\lambda)\in \Q[t],
   \end{equation*}
   then $P_n(\rat(\Phi))=0$ on $\rat(H^n(K_S))$. Since $\rat$ is faithful on $\MHM(S)$, we have $P_n(\Phi)=0$ on $H^n(K_S)$. Therefore the projectors $ p_\lambda:=\prod_{\lambda\neq\mu\in\Lambda_n}\frac{\Phi-\mu}{\lambda-\mu}$ give a direct sum decomposition $H^n(K_S)=\oplus_{\lambda}H^n(K_S)_\lambda$, where
   \begin{equation*}
       H^n(K_S)_\lambda:=\im(p_\lambda).
   \end{equation*}
   Applying \Cref{Jordan Decomposition} we obtain the decomposition of $K_S$.
\end{proof}

\begin{Remark}
    We cannot apply \Cref{splitting is defined over Q} directly, since $\rat$ is only faithful on $\MHM(S)$ but it is not faithful on $D^b\MHM(S)$.
\end{Remark}

We are going to match Hodge weights with the weights of local Hecke operators. The following lemma is a generalization of \cite[Proposition 6.4]{looijenga1991weights}.

\begin{Proposition}\label{Cohomology weight decomposition}
    For each $n\in\Z$, applying the functor $\rat\circ H^n\cong {}^pH^{n+\dim(X_{d})}\circ\rat$ to the exact triangle in $D^b\MHM(S)$
    \begin{equation*}
        i_S^*K_d\to K_S\to w_{>\dim(X_d)+w}K_S,
    \end{equation*}
    one obtains a short exact sequence of local systems on $S$
    \begin{equation*}
       0\to H^n(\rat(K_S))_{\leq 1}\to H^n(\rat(K_S))\to H^n(\rat(K_S))_{\geq 1}\to 0,
    \end{equation*}
    given by the decomposition of $H^n(\rat(K_S))$ with respect to the eigenvalues of $\Phi$.
\end{Proposition}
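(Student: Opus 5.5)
The plan is to reduce the statement to a pointwise comparison between Hodge weights and eigenvalues of $\Phi$ on the stalks $H^n(i_x^*K_S)$ for $x\in S$. This comparison is Looijenga's computation \cite[Proposition 6.4]{looijenga1991weights}, extended from $IC$ to the truncations $w_{\leq\aa}$. Three inputs from earlier in the paper make the reduction work. First, every object in sight is a smooth mixed Hodge module complex on $S$. Second, $\rat\circ H^n\cong {}^pH^{n+\dim(X_d)}\circ\rat$ on smooth objects (\Cref{smooth stalk}). Third, by \Cref{weight truncation for smooth MHM}, the triangle $i_S^*K_d\to K_S\to w_{>\dim(X_d)+w}K_S$ is the weight truncation $w_{\leq \dim(X_d)+w}K_S\to K_S\to w_{>\dim(X_d)+w}K_S$ of the smooth object $K_S$. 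This holds because $K_d=j_{d*}K_{d-1}$ truncated on $X_d$ at level $\dim(X_d)+w$, and $i_S^*w_{\leq\aa}=w_{\leq a_d}i_S^*$ on the closed stratum, by the gluing description of the $t$-structure. By the lemma on weight truncation triangles, applying $H^n$ therefore already yields a short exact sequence $0\to w_{\leq}H^n(K_S)\to H^n(K_S)\to w_{>}H^n(K_S)\to 0$. It remains to identify the weight pieces with the eigenvalue pieces.

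The identification is made through \Cref{Decomposition: Hodge side}. There $H^n(K_S)=\bigoplus_\lambda H^n(K_S)_\lambda$ in $\MHM(S)$, with $\Phi$ acting semisimply by $\lambda$. Since the $p_\lambda$ are morphisms of mixed Hodge modules, each summand is a sub-mixed Hodge module. I will show that $H^n(K_S)_\lambda$ is pure of a weight determined by $\lambda$, namely of stalk weight $n+w+\log_{\chi(a)}\lambda$ up to the normalizing shift, so that $\lambda\leq 1$ exactly when the weights are $\leq\dim(X_d)+w$. Once this holds, the exactness of $W$ and the splitting give $w_{\leq}H^n(K_S)=\bigoplus_{\lambda\leq1}H^n(K_S)_\lambda$ and $w_{>}H^n(K_S)=\bigoplus_{\lambda>1}H^n(K_S)_\lambda$. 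Applying the faithful exact functor $\rat$ then produces the claimed sequence of local systems, split according to the eigenvalues of $\Phi$.

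The core step is this purity claim: on the $\lambda$-eigenspace, $\Phi$ acts on the stalk $H^n(i_x^*K_S)$ through $\chi(a)^{k}$, where $k$ measures the Hodge weight. This is where the work lies, and I expect it to be the main obstacle. My first attempt would follow Looijenga. Near $x$, the link of $S$ fibers over the Borel–Serre data of $Q$. The $A$-action $\varphi$ from \Cref{Good nbd of S} is the action of the $\Q$-split torus, and the Hodge structure on the stalk of $j_{d*}$ is computed via the nilpotent orbit and limit mixed Hodge structure along the $W$-directions (Cattani–Kaplan–Schmid, as Looijenga uses). On the graded piece where $W$ has weight $k$, the torus acts through $\chi^{k-(\ldots)}$, which is the matching of $Gr^W$ with characters of $A$. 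Inductively, by (i.)$_{d-1}$ and (iii.)$_{d-1}$, $K_S$ is $\Phi$-equivariantly $\rat$-isomorphic to $L_S$. By \Cref{splitting is defined over Q}, each eigenspace of $L_S$ is built from Koszul terms $C^\bullet(\n_P,E)$ on which $A$ acts by a single character, so it suffices to check the compatibility on each such piece.

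Concretely, I would first prove the statement with $K_{d-1}$ replaced by $J_{d-1,*}\E^H$, which is Looijenga's setting. I would then use that $K_{d-1}$ is a $\Phi$-equivariant direct summand (over $\C$, and hence over $\Q$ by semisimplicity) to transfer the eigenvalue–weight matching. The delicate points are the normalization of $w$ and the shift by $\dim(X_d)$, which I would check on the trivial-coefficient case $E=\Q$ with $S$ a cusp of a modular curve.
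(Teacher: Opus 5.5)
Your outer architecture matches the paper's. You identify the triangle as the weight truncation of the smooth object $K_S$, match $\Phi$-eigenvalues with stalk weights, and apply $\rat$. The identification holds because $K_d$ is obtained from $j_{d*}K_{d-1}$ by a single truncation on $X_d$; note that $i_S^*w_{\le\aa}=w_{\le a_d}i_S^*$ is not a general identity for glued $t$-structures. Since $\Phi$ is semisimple with rational eigenvalues, your purity claim amounts to the paper's statement that $q^{w/2}\Phi$, with $q=\chi(a)^2$, is a $q$-endomorphism of each stalk $H^n(i_x^*K_S)$.

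\textbf{The gap.} It lies in the step you flag as the main obstacle: obtaining this for $K_S=i_S^*j_{d*}w_{\le\aa}J_{d-1,*}\E^H$. There, truncations have already been made along the strata $X_1,\dots,X_{d-1}$ adjacent to $S$, whereas Looijenga--Rapoport treat only the untruncated direct image. Your transfer mechanism, that $K_{d-1}$ is a $\Phi$-equivariant direct summand of $J_{d-1,*}\E^H$, fails for $d\ge 2$.
\begin{itemize}
\item Globally, the complement would be $w_{>\aa}J_{d-1,*}\E^H$. It is supported on the boundary and, as a summand of $i^!J_{d-1,*}\E^H=0$, would have vanishing $i^!$. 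Hence it would vanish, forcing $K_{d-1}=J_{d-1,*}\E^H$, which is false.
\item Locally, $K_S\to i_S^*J_{d,*}\E^H$ need not even be injective on cohomology. Take a $0$-dimensional cusp $x$ of a Siegel threefold with $E=\Q$. The $\lambda=1$ part of the stalk cohomology of $\rat K_S$ is the cohomology of the compact modular curve $\pi^{-1}(x)\subseteq\hX$. The $\lambda=1$ part for $i_S^*J_{d,*}\Q$ is the cohomology of the open curve, so $H^2$ is killed.
\end{itemize}
The Goresky--Harder--MacPherson splitting only separates $A_Q$-weights, i.e.\ the truncation at $S$ itself, not the truncations in the lower directions. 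Your other suggestion, reading the matching off the Koszul description of $L_S$, yields eigenvalues but no Hodge weights, since $L_S$ carries no Hodge structure.

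\textbf{The missing idea.} This is what the paper uses. The $q$-Hodge module structure of Looijenga--Rapoport on $J_*\E^H|_{B_S}$ passes to the weight truncations, because pointwise weight truncations form a $t$-structure on the derived category of $q$-mixed Hodge modules. So $K_{d-1}|_{B_S}$ is itself a $q$-object, and the stalk argument of \cite[(6.4)]{looijenga1991weights} applies directly to $j_{d*}K_{d-1}$. This gives $H^n(i_x^*K_S)_{\le 1}=W_{\le w}H^n(i_x^*K_S)$. The lemma on weight truncation of smooth modules then identifies the right side with $H^n(i_x^*i_S^*K_d)$, so one concludes stalkwise, with no need for purity of summands in $\MHM(S)$.

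Also fix your normalization: the $\lambda$-part has stalk weight $w+\log_{\chi(a)}\lambda$, independent of $n$. For example, at a cusp of a modular curve, $H^1$ has $\lambda=\chi(a)^2$ and weight $2$.
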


\begin{proof}(sketch.)
Let $q=\chi(a)^2\in \Z$. Let $B_S$ be a neighborhood of $S$ as in \Cref{Good nbd of S} where the local Hecke correspondence $\varphi$ acts on. As shown in \cite[(6.4)]{looijenga1991weights}, there is a $q$-Hodge module structure (\cite[(2.2)]{looijenga19882}) on the full direct image of $\E^H$, after restricting to $B_S$. This definition makes sense for the Hodge weight truncations, since pointwise weight truncations define a $t$-structure on the derived category of $q$-mixed Hodge modules. It follows that, as in \cite[(6.4)]{looijenga1991weights}, $q^{w/2}\Phi$ defines a $q$-endomorphism (\cite[(1.1)]{looijenga1991weights}) of the stalk of the mixed Hodge structure $H^n(i_x^*K_S)$ for any point $i_s:x\hookrightarrow S$. Since the subspace of $\Phi$-eigenvalues $\leq 1$ in $H^n(i_x^*K_S)$ amounts to the subspace of $q^{w/2}\Phi$-eigenvalues $\leq q^{w/2}$, it follows from the definition of a $q$-endomorphism that 
\begin{equation*}
    H^n(i_x^*K_S)_{\leq 1}=W_{\leq w}H^n(i_x^*K_S).
\end{equation*}
On the other side, by \Cref{weight truncation for smooth MHM}, $W_{\leq w}H^n(i_x^*K_S)$ is exactly 
\begin{equation*}
    H^n(i_x^*w_{\leq \dim(X_d)+w}K_S)\cong H^n(i_x^*i_S^*K_d).
\end{equation*}
Taking $\rat$ gives the desired short exact sequence.
\end{proof}

\begin{Corollary}\label{Hodge-Looijenga decomposition}
The exact triangle in $D^b\MHM(S)$
\begin{equation*}
    i_S^*K_d\to K_S\to w_{>\dim(X_d)+w}K_S
\end{equation*}
    is isomorphic to the split exact triangle
    \begin{equation*}
        K_{S,\leq}\to K_S\to K_{S,>},
    \end{equation*}
where
\begin{equation*}
    K_{S,\leq}:=\bigoplus_{\lambda\leq 1}(K_S)_\lambda,\ \ K_{S,>}:=\bigoplus_{\lambda>1} (K_S)_\lambda
\end{equation*}
    from the decomposition in \Cref{Decomposition: Hodge side}.
\end{Corollary}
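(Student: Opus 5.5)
The plan is to identify the first map $i_S^*K_d\to K_S$ with the inclusion of the summand $K_{S,\leq}$ of the decomposition in \Cref{Decomposition: Hodge side}. This is done by a Hom-vanishing argument in $D^b\MHM(S)$, with \Cref{Cohomology weight decomposition} used to control cohomology objects.

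First, observe that $i_S^*K_d = i_S^*w_{\leq \d+w}j_{d*}K_{d-1}$, restricted to the stratum $S$, equals $w_{\leq\dim(X_d)+w}K_S$. This follows from the gluing description of the truncation: $i_d^*$ of an object of $\wD{\leq\aa}$ lies in $\wD{\leq a_d}$, and the triangle $w_{\leq}K_S\to K_S\to w_{>}K_S$ is the one displayed. It therefore suffices to show that $K_{S,\leq}\in \wD{\leq \dim(X_d)+w}(S)$ and $K_{S,>}\in\wD{>\dim(X_d)+w}(S)$. Once this holds, uniqueness of truncation triangles for a $t$-structure gives a unique isomorphism of triangles $(w_{\leq}K_S\to K_S\to w_>K_S)\cong(K_{S,\leq}\to K_S\to K_{S,>})$ restricting to the identity on $K_S$. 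The split triangle is indeed a distinguished triangle in $D^b\MHM(S)$, because the decomposition of \Cref{Decomposition: Hodge side} is a direct sum decomposition there.

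To verify the weight conditions, it suffices to work on cohomology objects, since $\wD{\leq a}$ is defined via $H^i$ and $H^n(K_{S,\leq})=\bigoplus_{\lambda\leq1}H^n(K_S)_\lambda$ by the t-exactness in \Cref{Jordan Decomposition}. By \Cref{weight truncation for smooth MHM}(ii), membership can be checked pointwise on stalks $i_x^*$, with weights $\leq w$ (resp. $>w$) after the shift by $\dim(X_d)$. The proof of \Cref{Cohomology weight decomposition} gives exactly the stalk identity $H^n(i_x^*K_S)_{\leq 1}=W_{\leq w}H^n(i_x^*K_S)$. Because the $q$-endomorphism $q^{w/2}\Phi$ is semisimple with eigenvalue $q^{k/2}$ on the pure weight-$k$ graded pieces, the complementary eigenspace $\lambda>1$ maps isomorphically onto $Gr^W_{>w}$ and so has weights $>w$. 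Two points need care here. The stalk of $H^n(K_S)_\lambda$ must agree with the $\lambda$-eigenspace of the stalk; this holds because the projectors $p_\lambda$ are polynomials in $\Phi$ and $i_x^*$ is exact on smooth modules (\Cref{smooth stalk}). In addition, $W$ must be strictly compatible with the projectors, which holds because $\Phi$ is a morphism of mixed Hodge modules and $p_\lambda$ are morphisms in the abelian category $\MHM(S)$.

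The main obstacle is the weight bound on the $\lambda>1$ part. \Cref{Cohomology weight decomposition} is phrased only as a short exact sequence identifying the sub with the $\leq1$ eigenspace. To conclude that the $>1$ summand itself, not merely the quotient, has weights $>w$, I plan to use that a mixed Hodge structure with a $q$-endomorphism splits canonically by eigenvalue: the summand $H^n(i_x^*K_S)_{>1}$ is a sub-MHS isomorphic to the quotient $H^n(i_x^*K_S)/W_{\leq w}$ and hence has weights $>w$. Once both memberships are established, Hom-vanishing between $\wD{\leq}$ and $\wD{>}$ forces the isomorphism of triangles, which completes the argument.
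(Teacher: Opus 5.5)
Your argument is correct and is essentially the paper's: both use \Cref{Cohomology weight decomposition} to identify the $\lambda\leq 1$ part of each $H^n(K_S)$ with its weight-$\leq \dim(X_d)+w$ part, and then conclude from the formal properties of the weight $t$-structure. The only difference is that the paper works with the $K_{S,\leq}$ side alone: $K_{S,\leq}\to K_S$ factors through $i_S^*K_d$ and is an isomorphism on every $H^n$, and the comparison of the third terms then follows by taking cofibers, so your separate check that the $\lambda>1$ summand has weights $>w$ is valid but not needed.
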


\begin{proof}
    By \Cref{Cohomology weight decomposition}, the morphism $K_{S,\leq}\to K_S$ induces isomorphisms on cohomology
    \begin{equation*}
        H^n(K_{S,\leq})\xrightarrow{\sim} H^n(i_S^*K_d)
    \end{equation*}
    for each $n\in \Z$. Hence the morphism factors through an isomorphism $K_{S,\leq}\xrightarrow{\sim}i_S^*K_d$. Taking the cofiber gives the isomorphism between two exact triangles.  
\end{proof}

\

$\bullet$ \textbf{Step 2: Construction of $\alpha_d$.}

We begin with the following diagram of exact triangles
\begin{equation}\label{Construct isomorphism}
\begin{tikzcd}
L_d \arrow[r] \arrow[d, "\alpha_d", dashed] & j_{d*}L_{d-1} \arrow[d, "j_{d*}\alpha_{d-1}"] \arrow[r] & {i_{d*}i_d^!L_d[1]}                             \\
\rat(K_d) \arrow[r]                         & \rat(j_{d*}K_{d-1}) \arrow[r]                           & i_{d*}\rat(w_{>\dim(X_d)+w}i_d^*j_{d*}K_{d-1}).
\end{tikzcd}
\end{equation}

Let $S\subseteq X_d$ be any component stratum. Applying $i_S^*$ to the diagram above, by \Cref{splitting is defined over Q} and \Cref{Hodge-Looijenga decomposition}, it becomes
\begin{equation*}
\begin{tikzcd}
{L_{S,\leq}} \arrow[r]  & {L_{S}} \arrow[d, "i_S^*j_{d*}(\alpha_{d-1})"] \arrow[r] & {L_{S,>}}       \\
{\rat(K_{S,\leq})} \arrow[r]             & {\rat(K_{S})} \arrow[r]                                  & {\rat(K_{S,>})}
\end{tikzcd}
\end{equation*}

All arrows above are $\Phi$-equivariant by induction hypothesis (ii.)$_{d-1}$, and therefore they lift to morphisms in the $\Phi$-equivariant category $D^b_{c}(\Q_S)^\Phi$. By \Cref{Jordan Decomposition}, the mapping space 
\begin{equation*}
    \Hom_{D^b_{c}(\Q_S)^\Phi}(L_{S,\leq}, \rat(K_{S,>}))\simeq 0
\end{equation*}
is contractible. Hence we obtain an isomorphism of mapping spaces
\begin{equation*}
    \Hom_{D^b_{c}(\Q_S)^\Phi}(L_{S,>}, \rat(K_{S,>}))\xrightarrow{\simeq} \Hom_{D^b_{c}(\Q_S)^\Phi}(L_{S}, \rat(K_{S,>})).
\end{equation*}
Therefore, we obtain a $\Phi$-equivariant morphism $L_{S,>}\to \rat(K_{S,>})$ from the isomorphism $i_S^*j_{d*}(\alpha_{d-1})$, which is unique up to a contractible choice. It is an isomorphism since the triangles split. Since $i_{d*}$ is fully faithful, and $X_d$ consists of disjoint union of these component boundary strata $S$, they glue uniquely to an isomorphism 
\begin{equation*}
    i_{d*}i_d^!L_d[1]\to i_{d*}\rat(w_{>\dim(X_d)+w}i_d^*j_{d*}K_{d-1}).
\end{equation*}
We then define $\alpha_d:L_d\to \rat(K_d)$ to be the induced isomorphism on fibers.

\

$\bullet$ \textbf{Step 3: Hecke equivariance and compatibility.} 

We establish Hecke equivariance in steps. 

\begin{Lemma}\label{Translation I}
\begin{itemize}
    \item [(i.)] Let $g\in G(\Q)$, and $\Gamma':=g^{-1}\Gamma g$. For the isomorphism $c:=c_2:X_{\leq d}^{'BB}\xrightarrow{\simeq} \X_{\leq d}$, there is a commutative diagram
    \begin{equation*}
        \begin{tikzcd}
c^*L_d \arrow[r] \arrow[d, "g^*(\alpha_d)"] & L_d' \arrow[d, "\alpha_d'"] \\
c^*\rat(K_d) \arrow[r]                      & \rat(K_d')      ;                 
\end{tikzcd}
    \end{equation*}
\item[(ii.)] Let $\Gamma'\subseteq \Gamma$ of finite index. Then for the covering map $c:=c_2:X_{\leq d}^{'BB}\to \X_{\leq d}$, there is a commutative diagram
\begin{equation*}
    \begin{tikzcd}
c^*L_d \arrow[r] \arrow[d, "c^*(\alpha_d)"] & L_d' \arrow[d, "\alpha_d'"] \\
c^*\rat(K_d) \arrow[r]                      & \rat(K_d')         .              
\end{tikzcd}
\end{equation*}
\end{itemize}
\end{Lemma}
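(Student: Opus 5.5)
We prove both parts by induction along the construction of $\alpha_d$ in Step 2. By (ii.)$_{d-1}$, the corresponding square for $\alpha_{d-1}$ and $\alpha'_{d-1}$ already commutes. In both cases $c$ is a finite morphism of stratified spaces: an isomorphism in (i.), an étale covering in (ii.). Hence $c^*$ commutes with $j_{d*}$, $i_{d*}$, $i_d^!$ by base change. It also preserves the Hodge weight truncations, since $c^*$ is exact, preserves pointwise weights, and is compatible with $w_{\leq \underline{\dim}+w}$ because $c$ preserves strata dimensions. Consequently $c^*$ sends the defining triangle (\ref{Construct isomorphism}) for $\Gamma$ to a triangle canonically isomorphic to the one for $\Gamma'$. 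The plan is to show that the induced isomorphism on the third terms agrees with the one constructed for $\Gamma'$; taking fibers then gives the claimed square for $\alpha_d$.

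Concretely, first apply $c^*$ to the middle vertical arrow $j_{d*}\alpha_{d-1}$. By (ii.)$_{d-1}$ and base change $c^*j_{d*}\simeq j'_{d*}c^*$, it is identified with $j'_{d*}\alpha'_{d-1}$. Next, fix a component stratum $S'$ of $X'_d$ mapping to $S$. In case (i.), $S' = g^{-1}S$, and the parabolic $Q'=g^{-1}Qg$ has split center $g^{-1}Ag$. Choose the divisible element $a'=g^{-1}ag$; the local Hecke operators $\Phi$ and $\Phi'$ are then intertwined by $c$. In case (ii.), $\Gamma'_{Q}\subseteq\Gamma_Q$. Choose an element $a$ divisible for both, for instance a suitable power, which is harmless since $\lambda\le 1$ is equivalent to $\lambda^n\le 1$ for positive rational eigenvalues. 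Then $c$ restricts on $B_{S'}$ to a covering compatible with $\varphi$, so $c^*\Phi=\Phi'$ on $c^*L_S\cong L_{S'}$ and on $c^*K_S\cong K_{S'}$. Because eigenvalue decompositions are preserved by pullback, $c^*$ carries the splittings of \Cref{splitting is defined over Q} and \Cref{Hodge-Looijenga decomposition} to the corresponding splittings for $S'$.

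The key step is uniqueness. The morphism $L_{S,>}\to\rat(K_{S,>})$ was characterized as the unique $\Phi$-equivariant map compatible with $i_S^*j_{d*}\alpha_{d-1}$, unique because $\Hom_{D^b_c(\Q_S)^\Phi}(L_{S,\leq},\rat(K_{S,>}))\simeq 0$ by \Cref{Jordan Decomposition}. Consider two maps $L_{S',>}\to \rat(K_{S',>})$: the pullback $c^*$ of the map for $S$, and the map built directly for $S'$. Both are $\Phi'$-equivariant and both are compatible with $j'_{d*}\alpha'_{d-1}$. The same orthogonality in $D^b_c(\Q_{S'})^{\Phi'}$ therefore forces them to be equivalent, up to a contractible choice. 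Gluing over components via full faithfulness of $i_{d*}$, we obtain an identification of the two morphisms of triangles; passing to fibers yields the commutative squares in (i.) and (ii.).

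I expect the main obstacle to be the coherence of the local Hecke operators under $c$. This means checking that the neighborhoods $B_S$, the maps $\varphi$, and the induced maps (\ref{phi-equivariant, Hodge}) and their Lie-side analogues are strictly compatible with base change along $c$, so that the comparison happens in the $\Phi$-equivariant $\infty$-category and not merely on homotopy categories. In case (ii.), one must also ensure that replacing $a$ by a common divisible power does not change the decompositions. This holds because the eigenvalue subsets $\lambda\le1$ and $\lambda>1$ are stable under positive powers, and the splittings of \Cref{Jordan Decomposition} are unique.
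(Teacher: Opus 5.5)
Your architecture is the paper's. The middle square comes from (ii.)$_{d-1}$. The square on third terms is forced by the $\Phi$-eigenvalue splittings; your uniqueness via the orthogonality in \Cref{Jordan Decomposition} is the mechanism behind the paper's appeal to \Cref{splitting is defined over Q} and \Cref{Hodge-Looijenga decomposition}. Everything is assembled after $i_S^*$ into a cubic morphism in the $\Phi$-equivariant category, and one takes fibers. For (i.), where $c$ is an isomorphism, your argument is fine.

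For (ii.), however, your premise that $c\colon X^{'BB}_{\leq d}\to\X_{\leq d}$ is an étale covering is false. So are its consequences: that $c^*$ commutes with $j_{d*}$, $i_d^!$ and $w_{\leq\underline{\dim}+w}$, and that $c^*$ carries the triangle (\ref{Construct isomorphism}) for $\Gamma$ to one isomorphic to the triangle for $\Gamma'$. The covering is étale over $X$ but ramifies along the boundary. The natural maps $c^*L_d\to L'_d$, $c^*L_S\to L'_{S'}$ and $c^*K_S\to K'_{S'}$ are in general not isomorphisms. For example, take $G=\mathrm{Sp}_4$ and $E=\Q$. The stalk of $L_r=\pi_*\Q_{\hX}$ at a $0$-dimensional cusp $x$ is $H^*(\pi^{-1}(x);\Q)$, and $\pi^{-1}(x)$ is a compactified modular curve attached to $\Gamma_{M_l}$, whose $H^1$ grows with the level. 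Likewise, $c^*$ preserves ${}^wD^{\leq}$ but not ${}^wD^{>}$. This is why the lemma only asserts squares whose horizontal arrows are the natural base-change maps. Consequently, ``the pullback $c^*$ of the map for $S$'' is a map $c^*L_{S,>}\to c^*\rat(K_{S,>})$, not a map $L'_{S',>}\to\rat(K'_{S',>})$, so your uniqueness comparison is not well-posed as written.

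The repair keeps your idea but works with the morphism of triangles given by the natural maps. Use a common divisible $a$ only to know that $c^*L_S\to L'_{S'}$ and $c^*\rat(K_S)\to\rat(K'_{S'})$ are $\Phi$-equivariant, hence block diagonal for the $\lambda$-splittings. Then compare the two equivariant composites $c^*L_{S,>}\to L'_{S',>}\to\rat(K'_{S',>})$ and $c^*L_{S,>}\to c^*\rat(K_{S,>})\to\rat(K'_{S',>})$. By (ii.)$_{d-1}$, both agree after precomposing with the split projection $c^*L_S\to c^*L_{S,>}$. Since $\Hom_{D^b_c(\Q_{S'})^{\Phi}}(c^*L_{S,\leq},\rat(K'_{S',>}))\simeq 0$, the two composites agree. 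This gives the cube whose fiber is the square in (ii.), which is what the paper's ``identical to (i.)'' amounts to.

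Separately, your claim that a suitable power of a $\Gamma$-divisible element is $\Gamma'$-divisible fails in general. Take $\Gamma=\Gamma(3)\subseteq\mathrm{Sp}_4(\Z)$, let $\Gamma'$ be the preimage of $A_6\subseteq S_6\cong\mathrm{Sp}_4(\mathbb{F}_2)$, let $Q$ be the Klingen parabolic, and take $\chi(a)=2$. Choose $n\in N_Q$ and $m$ in the Levi of $Q$, both reducing to transvections mod $2$. Then $nm\in\Gamma'_Q$. But every $a^k$ with $k\geq 1$ kills the unipotent part mod $2$, so $a^k(nm)a^{-k}\notin\Gamma'_Q$. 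Instead, take the common divisible element from the proof of Looijenga's (3.6), as the paper does. Your ``$\lambda\le 1\iff\lambda^n\le 1$'' argument must then be replaced by a proof that $\alpha_d$ does not depend on the choice of $a$. One way is uniqueness for the two commuting operators $\Phi_a,\Phi_{a'}$, i.e.\ \Cref{Jordan Decomposition} with $T=\Z^2$.
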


\begin{proof}
\begin{itemize}
    \item [(i.)] By induction hypothesis (ii.)$_{d-1}$, there is a commutative diagram 
\begin{equation*}
    \begin{tikzcd}
c^*j_{d*}L_{d-1} \arrow[r] \arrow[d, "c^*j_{d*}(\alpha_d)"] & j'_{d*}L'_{d-1} \arrow[d, "j_{d*}(\alpha_d')"] \\
\rat(c^*j_{d*}K_{d-1}) \arrow[r]                            & \rat(j_{d*}K_d')            .                  
\end{tikzcd}
\end{equation*}
We claim first that there is a commutative diagram
\begin{equation*}
    \begin{tikzcd}
{c^*i_{d*}i_d^!L_d[1]} \arrow[r] \arrow[d, "\cong"]          & {i'_{d*}i_{d}^{'!}L'_d[1]} \arrow[d, "\cong"]       \\
\rat(c^*i_{d*}w_{>\dim (X_d)+w}i_d^*j_{d*}K_{d-1}) \arrow[r] & \rat(i'_{d*}w_{>\dim X_d+w}i_d^{'*}j'_{d*}K'_{d-1}).
\end{tikzcd}
\end{equation*}
Since $i_{d*}$ and $i'_{d*}$ are fully faithful, and $X_d$ (resp. $X'_d$) is the disjoint union of component boundary stratum $S$ (resp. $S'$), it suffices to establish the commutative diagram after taking $i_S^*$ (resp. $S'$) for each $S$ (resp. $S'$). Indeed, in this case there is a correspondence between $S$ and $S'$ by the action of $g$. To be more precise, the corresponding maximal parabolic subgroups is $Q'=g^{-1}Qg$, and we have $\Gamma'_Q=g^{-1}\Gamma_Q g$. Fix a divisible $a\in A(\Q)$, under the conjugation $a':=g^{-1}ag\in A'(\Q)$ is divisible. It follows that the weights are preserved under conjugation, namely
\begin{equation*}
    c^*(L_S)_\lambda\subseteq (L'_S)_{\lambda}, c^*(K_S)_{\lambda}\subseteq (K_S)_\lambda. 
\end{equation*}
We then obtain the claimed commutative diagram by \Cref{splitting is defined over Q} and \Cref{Hodge-Looijenga decomposition}. Now we claim there is a "cubic morphism" from the first commutative diagram to the second commutative diagram, as an object in 
\begin{equation*}
    Fun((\Delta^1)^3,D^b_c(X^{'BB}_{\leq d}))\cong Fun(\Delta^1,Fun((\Delta^1)^2, D^b_c(X^{'BB}_{\leq d})).
\end{equation*}
Indeed, by adjunction it suffices to construct the morphism after taking $i_S^*$ (resp. $i_S^{'*}$). Then we obtain the obvious morphism in the category of $\Phi$-equivariant sheaves. The fiber of this cubic morphism gives the desired commutative diagram;

    \item [(ii.)] The proof is identical to (i.), with the following observation: From the proof of \cite[(3.6)]{looijenga19882}, for each $Q$, we can find a common divisible element $a\in A(\Q)$ for $\Gamma$ and $\Gamma'$: 
    \begin{equation*}
        a\Gamma_Q a^{-1}\subseteq \Gamma_Q, a\Gamma'_Qa^{-1}\subseteq \Gamma'_Q.
    \end{equation*}
\end{itemize}    
\end{proof}

Combining the two statements, we obtain the translation compatibility (ii.)$_d$.
 \begin{Corollary}\label{Translation II}
     Let $g\in G(\Q)$ and $\Gamma'\subseteq \Gamma\cap g^{-1}\Gamma g$ of finite index. For the induced map $c=c_2: X_{\leq d}^{'BB}\to \X_{\leq d}$, there is a commutative diagram
      \begin{equation*}
        \begin{tikzcd}
c^*L_d \arrow[r] \arrow[d, "g^*(\alpha_d)"] & L_d' \arrow[d, "\alpha_d'"] \\
c^*\rat(K_d) \arrow[r]                      & \rat(K_d')      .                
\end{tikzcd}
    \end{equation*}
 \end{Corollary}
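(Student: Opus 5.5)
The plan is to factor the map $c=c_2$ through an intermediate level and then invoke the two cases of \Cref{Translation I}. Put $\Gamma'':=g\Gamma' g^{-1}$. Because $\Gamma'\subseteq g^{-1}\Gamma g$, we get $\Gamma''\subseteq \Gamma$ of finite index. The map $c_2(\Gamma' x)=\Gamma gx$ then splits as
\begin{equation*}
    X'^{BB}_{\leq d}=(\Gamma'\backslash D)^{BB}_{\leq d}\xrightarrow{\ c_g\ }(\Gamma''\backslash D)^{BB}_{\leq d}\xrightarrow{\ c_\pi\ }\X_{\leq d},
\end{equation*}
where $c_g(\Gamma'x)=\Gamma''gx$ and $c_\pi(\Gamma''y)=\Gamma y$. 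The first map is the isomorphism induced by $g$, since $\Gamma'=g^{-1}\Gamma''g$. The second is the finite covering coming from the inclusion $\Gamma''\subseteq\Gamma$. Both maps preserve the filtrations by $\X_{\leq d}$, because both are restrictions of Hecke correspondences.

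For each factor I use the corresponding part of \Cref{Translation I}. Part (ii), applied to $c_\pi$, produces a commutative square relating $c_\pi^*\alpha_d$ and $\alpha''_d$. Part (i), applied to $c_g$ with $\Gamma''$ in the role of $\Gamma$, produces a commutative square relating $c_g^*\alpha''_d$ and $\alpha'_d$. I then pull back the first square by $c_g^*$ and paste it horizontally to the second. The outer rectangle is the required square. Its top arrow is the composite $c_g^*c_\pi^*L_d\to c_g^*L''_d\to L'_d$, and its bottom arrow is the analogous composite for $\rat(K_d)$.

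What remains is to identify these composites with the canonical translation maps $c^*L_d\to L'_d$ and $c^*\rat(K_d)\to\rat(K'_d)$, using $c^*\cong c_g^*c_\pi^*$. On $L_d$ the translation maps come from pullback of special forms on $\hX$ along the induced maps of reductive Borel--Serre compactifications, and pullback of forms is strictly functorial. On the Hodge side the maps are assembled from base change morphisms $c^*j_*\to j'_*c^*$ together with the weight truncation $w_{\leq\underline{\dim}+w}$. Base change morphisms compose correctly with respect to $c=c_\pi\circ c_g$. The maps induced on weight truncations are unique by the orthogonality $R\Hom(\wD{\leq\aa},\wD{>\aa})=0$ used in \Cref{Hecke correspondence-Hodge truncation}, so they compose as well.

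I expect this identification of composites to be the main, though mild, obstacle. The uniqueness of the maps induced on truncations makes it formal, so no analytic or weight argument should be needed beyond \Cref{Translation I}.
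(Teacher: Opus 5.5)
Your proposal is correct and is the paper's argument: the paper proves this corollary in one line, by combining parts (i) and (ii) of \Cref{Translation I}. Your factorization $c_2=c_\pi\circ c_g$ through $\Gamma''=g\Gamma' g^{-1}\subseteq\Gamma$ makes that combination explicit, and your remark that the canonical translation maps compose correctly, by uniqueness of maps into weight truncations, fills in a detail the paper leaves implicit.
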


Now we establish the Hecke equivariance (iii.)$_d$.
\begin{Proposition}\label{Hecke equivariance}
$\alpha_d$ is Hecke equivariant for any $g\in G(\Q)$ and $\Gamma'\subseteq \Gamma\cap g^{-1}\Gamma g$ of finite index.
\end{Proposition}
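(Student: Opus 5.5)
We derive Hecke equivariance from the translation compatibility of \Cref{Translation II} together with a trace/adjunction argument for the finite map $c_1$. The Hecke morphism $c_{1*}c_2^*L_d\to L_d$ is the composite of three maps: the translation $c_{1*}c_2^*L_d\to c_{1*}L'_d$, obtained by applying $c_{1*}$ to the base-change map $c_2^*L_d\to L'_d$; the identification $L'_d\cong c_1^!L_d$ (which holds because $c_1$ is a finite covering map of Baily--Borel compactifications, induced from $\Gamma'\subseteq\Gamma$); and the counit $c_{1*}c_1^!L_d\to L_d$. The same three-step factorization holds on the Hodge side for $K_d$. The plan is to check compatibility of $\alpha_d$ with each of the three pieces separately.

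\textbf{First piece.} Apply $c_{1*}$ to the square in \Cref{Translation II}. This gives a commutative square relating $c_{1*}c_2^*L_d\to c_{1*}L'_d$ with its Hodge counterpart, with vertical maps $c_{1*}c_2^*(\alpha_d)$ and $c_{1*}(\alpha'_d)$.

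\textbf{Second piece.} We must show that under $\alpha'_d$ the isomorphisms $L'_d\cong c_1^!L_d$ and $\rat(K'_d)\cong\rat(c_1^!K_d)$ correspond. Since $c_1$ is a finite map, it is not étale over the boundary, so $c_1^!$ and $c_1^*$ need not agree. I would argue via the uniqueness built into the construction of $\alpha_d$. On the open part $X'_{\leq d-1}$ the compatibility is the induction hypothesis (iii.)$_{d-1}$, or equivalently (ii.)$_{d-1}$ applied to $c_1$. On the stratum $X'_d$, both $L'_d$ and $c_1^!L_d$ are characterized as the fiber of a map from $j'_{d*}L'_{d-1}$ to a $\Phi$-weight-$>1$ summand. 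The Hodge side has the matching characterization, by \Cref{Hodge-Looijenga decomposition} and \Cref{splitting is defined over Q}, using a divisible element $a$ common to $\Gamma$ and $\Gamma'$ as in \Cref{Translation I}(ii). The orthogonality in \Cref{Jordan Decomposition} makes the extension of a given isomorphism on $j_{d*}L_{d-1}$ unique up to contractible choice. Hence any two candidate isomorphisms on the degree-$d$ part coincide, and the square commutes.

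\textbf{Third piece.} For the counit square
\begin{equation*}
c_{1*}c_1^!L_d\to L_d \quad\text{versus}\quad \rat(c_{1*}c_1^!K_d)\to\rat(K_d),
\end{equation*}
compatibility is naturality of the counit of the adjunction $c_{1*}\dashv c_1^!$ applied to the morphism $\alpha_d$. This step also uses that $\rat$ commutes with $c_{1*}$ and $c_1^!$ together with their adjunction data.

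Pasting the three squares yields the diagram in (iii.)$_d$.

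The main obstacle is the second piece, identifying $L'_d\cong c_1^!L_d$ compatibly with $\alpha$ over the boundary. If a direct argument is awkward, the fallback is to reduce to the restrictions $i_S^*$ and argue in the $\Phi$-equivariant category on each component stratum. There the weight-$\leq1$ part $L_{S,\leq}$ has no nonzero maps to the weight-$>1$ part, so any morphism between the split triangles is determined by its middle term $i_S^*j_{d*}$. The middle terms agree by (iii.)$_{d-1}$, and the result then glues along the fully faithful functor $i_{d*}$, exactly as in Step 2.
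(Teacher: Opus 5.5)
Your first and third pieces are fine, but the second piece fails, and it carries all the content of the proposition. The identification $L'_d\cong c_1^!L_d$ (and likewise $\rat(K'_d)\cong\rat(c_1^!K_d)$) is false in general. For $x'\mapsto x$ one has $i_{x'}^!c_1^!L_d\cong(c_1\circ i_{x'})^!L_d=i_x^!L_d$, so $c_1^!L_d$ has exactly the costalks of $L_d$, whereas the boundary costalks of $L'_d$ depend on the level. For instance, take trivial coefficients and a $0$-dimensional cusp $x$ of a Siegel modular threefold. Then $\pi^{-1}(x)$ is a compact modular curve attached to $\Gamma$, $\pi$ is a homeomorphism over the rest of a conical neighbourhood of $x$, and the link of $x$ has Euler characteristic $0$. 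Hence $\chi(i_x^!L_d)=\chi(\pi^{-1}(x))$, while the analogous curve $(\pi')^{-1}(x')$ for $\Gamma'$ has larger genus once $\Gamma'$ is deep enough. The same ramification of $c_1$ along the boundary breaks your description of $c_1^!L_d$ as a fiber of $j'_{d*}L'_{d-1}$ into a weight-$>1$ summand of $L'_{S'}$. Applying $c_1^!$ to the defining triangle of $L_d$ produces terms built from $c_1^!L_{d-1}$ and $(c_1|_{X'_d})^*L_{S,>}$, not from $L'_{d-1}$ and $L'_{S'}$. What exists is only the non-invertible map $L'_d\to c_1^!L_d$ adjoint to the trace $c_{1*}L'_d\to L_d$. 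Proving that it intertwines $\alpha'_d$ with $c_1^!(\alpha_d)$ is equivalent to the trace compatibility itself, so your second piece is the proposition, not a lemma toward it.

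Moreover, the argument you give for that square is precisely the inference the paper rules out in \Cref{Flaw of Nair's proof}. That argument is agreement over $X'_{\leq d-1}$ plus uniqueness on each $S'$ from $\Phi$-orthogonality, and your fallback is the same. Two lifts $L'_d\to\rat(c_1^!K_d)$ of the same map on $j'_{d*}(\cdot)$ differ by an element of the image of $\Hom(L'_d,z'[-1])$, where $z'$ is the third term of the target triangle. After $i_{S'}^*$ the target triangle splits, so this ambiguity becomes invisible, and strata-wise uniqueness proves nothing about the global maps. Orthogonality and full faithfulness of $i_{d*}$ let you \emph{construct} maps into objects supported on $X_d$ and then pass to fibers, as in Step 2 and in the cubic morphism of \Cref{Translation I}; they do not make morphisms out of $L'_d$ unique.

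The paper avoids $c_1^!$ entirely. It reduces to $g=1$ by \Cref{Translation II} and to $\Gamma'$ normal in $\Gamma$ with $\Delta=\Gamma/\Gamma'$. The unit maps $L_d\to c_*L'_d$ and $\rat(K_d)\to c_*\rat(K'_d)$ are compatible with $\alpha$ by \Cref{Translation I}, and they identify $L_d$ and $K_d$ with the $\Delta$-invariant direct factors. The trace is then $|\Delta|$ times the projection onto that factor. So trace compatibility reduces to $*$-pullback statements (the covering map and the $\Delta$-translations) that were already obtained from the fiber-of-a-cube construction.
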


\begin{proof}
    By \Cref{Translation II}, it remains to show the compatibility under the trace map via $c:=c_1$, and we can assume that $g=1$ by the same corollary, thus $c=c_2$. Moreover, we can reduce to the case where $\Gamma'$ is a normal subgroup of $\Gamma$ by Selberg's lemma. In this case, $X'\to X$ is a finite covering with Galois group $\Delta:=\Gamma/\Gamma'$. By (ii.) of \Cref{Translation I}, the adjunction maps $L_d\to c_*L'_d$ and $\rat(K_d)\to c_*\rat(K'_d)$ are compatible with the isomorphism $\alpha_d$ (with $c_*(\alpha'_d)$). The targets of these two maps are acted by $\Delta$, and the sources are identified with $\Delta$-invariants, to be more precise, the image of the projector
    \begin{equation*}
        |\Delta|^{-1}\sum_{\delta\in \Delta}\delta.
    \end{equation*}
    To see this, one applies induction on $d$, and it is a standard fact in deck transformation to see this for $d=0$. Assume it holds for $d-1$. Since $c$ is a finite map, it preserves Hodge truncations, thus the statement for $K$; the Lie weights are clearly preserved under $c$, hence also the statement for $L$. 
    
    Finally, the trace maps $c_*L'_d\to L_d$ (resp. $c_*K'_d\to K_d$) is a multiple by $|\Delta|$ of the projection of $c_*L'_d$ (resp. $c_*K'_d$) to its direct factor $L_d$ (resp. $K_d$). 
\end{proof}

Finally, we establish the compatibility (iv)$_d$.
\begin{Proposition}
There is a commutative diagram
\begin{equation*}
\begin{tikzcd}
L_d \arrow[r] \arrow[d, "\alpha_d"] & (\pi_*\W^{-\rho+\varepsilon}C(\E))|_{\X_{\leq d}} \arrow[d, "\beta_{d}"] \\
\rat(K_d) \arrow[r]                 & {IC(\E)|_{\X_{\leq d}}    .}                                            
\end{tikzcd}
\end{equation*}   
\end{Proposition}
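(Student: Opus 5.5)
We argue inductively, exactly as in Step 2, by comparing the two gluing triangles on $\X_{\leq d}$. Write $M_d:=(\pi_*\W^{-\rho+\varepsilon}C(\E))|_{\X_{\leq d}}$ and $I_d:=IC(\E)|_{\X_{\leq d}}$. Morel's description (\Cref{Morel Intersection}) expresses $I_d$ as the fiber of $\rat(j_{d*}I_{d-1})\to i_{d*}\rat(w_{>\dim(X_d)+w}i_d^*j_{d*}I_{d-1})$. Likewise $M_d$ is the fiber of $j_{d*}M_{d-1}\to i_{d*}i_d^!M_d[1]$. The inclusion of weighted complexes $\W^0C(\E)\hookrightarrow\W^{-\rho+\varepsilon}C(\E)$ gives a morphism of triangles from the $L$-triangle to the $M$-triangle. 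The natural map $K_d=w_{\leq\d+w}j_*\E^H\to w_{\leq d_X+w}j_*\E^H$ gives a morphism of triangles from the $K$-triangle to the $I$-triangle.

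The induction hypothesis (iv.)$_{d-1}$ supplies the commutative square on $\X_{\leq d-1}$. Applying $j_{d*}$ gives commutativity of the middle column: $j_{d*}(\beta_{d-1})\circ j_{d*}(L_{d-1}\to M_{d-1})=\rat(j_{d*}(K_{d-1}\to I_{d-1}))\circ j_{d*}(\alpha_{d-1})$. We also need the third-column square to commute. We would upgrade both squares to a cube in $Fun((\Delta^1)^3,D^b_c(\Q_{\X_{\leq d}}))$, exactly as in the proof of \Cref{Translation I}, and then take fibers, which yields the desired square for $\alpha_d,\beta_d$.

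To get the third column, apply $i_S^*$ for each component stratum $S\subseteq X_d$, since $i_{d*}$ is fully faithful and $X_d=\coprod S$. After $i_S^*$ every object carries a $\Phi$-action for a fixed divisible $a$, and all horizontal maps are $\Phi$-equivariant. For the $L\to M$ side this is because both come from the same action of $A(\Q)$ on Koszul complexes. For the $K\to I$ side it is by the uniqueness in \Cref{Hecke correspondence-Hodge truncation}. The middle vertical maps are $\Phi$-equivariant by (iii.)$_{d-1}$ and (i.) of \Cref{Theorem: Lie-Hodge correspondence}, the latter being Hecke-equivariant by the same argument run for $\eta=-\rho+\varepsilon$. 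By \Cref{splitting is defined over Q}, \Cref{Hodge-Looijenga decomposition} and their analogues for profile $-\rho+\varepsilon$ and for the $IC$ truncation, each third-column object is the $\Phi$-eigen part with eigenvalues in a prescribed range. For $L,K$ the range is $\lambda>1$. For $M,I$ it is $\lambda>q^{\text{middle}}$, which is a sub-range of $\lambda>1$. The third-column maps are then the induced projections, and both composites $i_S^*j_{d*}L_{d-1}\to i_S^*(\text{third column of }I)$ agree. By the orthogonality in \Cref{Jordan Decomposition}, maps out of $L_{S,>}$ into a $\Phi$-equivariant object are determined by their precomposition with $L_S\to L_{S,>}$, since $\Hom(L_{S,\leq},-)$ vanishes on the relevant eigen-blocks. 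So the square commutes up to a contractible choice, and these homotopies assemble into the cube.

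The main obstacle is the construction of the cube in the $\infty$-categorical sense, namely checking that the homotopies chosen in Step 2 for $\alpha_d$ and for $\beta_d$ are compatible. We would handle this by working throughout in $D^b_c(\Q_S)^\Phi$. There the relevant mapping spaces $\Hom(L_{S,\leq},\rat(I_{S,>}))$ are contractible, so every choice is unique up to a contractible space, and commutativity is automatic once all arrows are shown to be $\Phi$-equivariant. A secondary point to verify is that the eigenvalue splitting for $\W^{-\rho+\varepsilon}$ refines that for $\W^0$, i.e.\ that the threshold satisfies $\chi(a)^{-\rho}\leq1$-compatibility, which is immediate from \cite[16.11]{goresky1994weighted}.
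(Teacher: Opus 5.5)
Your proposal is correct and is essentially the paper's own argument. The paper also uses Morel's formula $IC_X(\E)=w_{\leq d_X+w}j_*\E$ to put the $IC$ row into the same gluing-triangle form as the $K$ row, and then declares the proof identical to (i.) of \Cref{Translation I}: the middle face comes from (iv.)$_{d-1}$, the third face is built $\Phi$-equivariantly after $i_S^*$ from the eigenvalue splittings, and one takes fibers of the resulting cube.

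One slip to fix: in the $IC$ triangle the closed-stratum truncation is $w_{>d_X+w}$, not $w_{>\dim(X_d)+w}$ (the latter is the one for $K_d$). It is precisely $\dim(X_d)\leq d_X$ on the Hodge side, together with $\W^0C(\E)\subseteq\W^{-\rho+\varepsilon}C(\E)$ on the Lie side, that makes the third-column eigenvalue ranges for $\W^{-\rho+\varepsilon}$ and $IC$ sub-ranges of $\lambda>1$, which is what your argument actually uses.
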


\begin{proof}
    We use Morel's formula (\Cref{Morel Intersection})
    \begin{equation*}
        IC_X(\E)=w_{\leq d_X+w}j_*\E.
    \end{equation*}
The proof is then identical to (i.) of \Cref{Translation I}.
\end{proof}

Taking $d=r$, we complete the proof of \Cref{Theorem: Lie-Hodge correspondence}. 

\begin{Remark}\label{Flaw of Nair's proof}
    There are several gaps in Nair's proof of \cite[Theorem 4.3.1, Proposition 4.4.1]{nair2012mixed}. Consider two exact triangles in a triangulated category $\CC$, 
    \begin{equation*}
        \begin{tikzcd}
x \arrow[r] \arrow[d, dashed] & y \arrow[r] \arrow[d] & z  \\
x' \arrow[r]                  & y' \arrow[r]          & z',
\end{tikzcd}
    \end{equation*}
with a morphism $y\to y'$ fixed. Then there exists at most one map $x\to x'$ making the diagram commutes if and only if 
\begin{equation*}
    \im(\Hom_\CC(x, z'[-1])\to \Hom_\CC(x, x'))=0.
\end{equation*}
    We use the diagram above to abbreviate the diagram \ref{Construct isomorphism}. In \cite[p.30]{nair2012mixed}, Nair claims that $\alpha_d$ is the unique map making the diagram 
    \begin{equation*}
        \begin{tikzcd}
x \arrow[d, "\alpha_d"] \arrow[r] & y \arrow[d, "j_{d*}(\alpha_{d-1})"] \\
x' \arrow[r]                 & y'                            
\end{tikzcd}
    \end{equation*}
    commutes. The difference of two such lifts is measured by a morphism 
    \begin{equation*}
        \beta: x\to z'[-1].
    \end{equation*}
   By applying $i_S^*$, the lower exact triangle splits, hence $i_S^*(z'[-1]\to x')=0$. Therefore, the inference on p.30 that 
\begin{equation*}
    \mathrm{restricted\ lift\ unique}\implies i_S^*\beta=0
\end{equation*}
is invalid, since it holds for any morphism $\beta$. For the same reason, we can neither deduce that $i_S^*\beta$ is $\Phi$-equivariant nor that the image $\im(\beta)$ vanishes in $\Hom(x, x')$, since both sheaves are not supported on $X_d$ anymore, and the restriction to open and closed strata in general is not faithful. The similar inferences
\begin{equation*}
    \mathrm{restricted\ triangle\ splits}\implies i_S^*(\tilde{\delta})=0
\end{equation*}
on p.31, and $i_S^*(\delta)=0$ on p.33 in the proof of Proposition 4.4.1 of loc. cit, are invalid by the same reasoning.
\end{Remark}

\begin{Remark}\label{why infinity cats}
    Our proof does not work in the setting of triangulated categories. The issue arises from the non-functoriality of cones. For example, in the proof of (i.) of \Cref{Translation I}, we construct the commutative diagram as the fiber of a "cubic morphism". This would fail if one ignore the higher coherence and simply apply (TR3) in triangulated categories. For example, in the derived category of $\C$-vector spaces, consider one square consisting of $\C$ with identities, and the other square $\C[1]$ with identities, with cubic morphism $0$. There are infinitely many non-commutative diagrams on the cocone $\C\oplus \C$ making all other faces commutative. 
\end{Remark}

\section{Weight Comparison and Intersection Cohomology}\label{section: Weight Comparison and Intersection Cohomology}

In this section, we state our main theorem and first consequences on the intersection cohomology of the Baily--Borel compactification.
\subsection{Weight Comparison}
Let $X=\Gamma\backslash G(\R)/KA_G$ be a Hermitian locally symmetric space, and $E$ an irreducible representation of $G$ defined over $\Q$, with the polarizable admissible variation of $\Q$-Hodge structures $\E$ on $X$ of weight $w\in \Z$. Let $\hX$ be the reductive Borel--Serre compactification, and $\X$ the Baily--Borel compactification.

\begin{Theorem}\label{Comparison to RBS}
There is a canonical isomorphism of pure Hodge structures of weight $k+w$
\begin{equation*}
    Gr^W_{k+w}\W^0H^{k}(X,\E)\cong IH^{k}(\X,\E).
\end{equation*}
\end{Theorem}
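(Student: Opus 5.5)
The plan is to combine the injectivity from the Hodge-theoretic side with the surjectivity from the analytic side. Consider the canonical map
\[
\W^0H^k(X,\E)\to IH^k(\X,\E)
\]
induced by the inclusion of weighted complexes $\W^0C(\E)\hookrightarrow \W^{-\rho+\varepsilon}C(\E)$, followed by the isomorphism $\pi_*\W^{-\rho+\varepsilon}C(\E)\cong \rat(IC_{\X}(\E^H))$ of \Cref{Theorem: Lie-Hodge correspondence}(i.). By \Cref{Image of MHS}, this map is a morphism of mixed Hodge structures. Its image is identified with $Gr^W_{k+w}\W^0H^k(X,\E)$, and by \Cref{Injection} the induced map $Gr^W_{k+w}\W^0H^k(X,\E)\hookrightarrow IH^k(\X,\E)$ is injective. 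Since $\X$ is projective, $IH^k(\X,\E)$ is pure of weight $k+w$. So the only remaining task is to show that the canonical map $\W^0H^k(X,\E)\to IH^k(\X,\E)$ is surjective.

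Surjectivity of a map of finite-dimensional $\Q$-vector spaces can be checked after base change to $\C$. Over $\C$, the weighted complexes are realized by special forms, and the isomorphism of \Cref{Theorem: Lie-Hodge correspondence} becomes compatible with the Goresky--Harder--MacPherson identification via \cite[(29.2)]{goresky1994weighted}. I would then use the commutative square from the proof of \Cref{p to 2 surjectivity}: the composite $\W^0C(\E_\C)\hookrightarrow\W^{-\rho+\varepsilon}C(\E_\C)\hookrightarrow A_{(2)}(\E_\C)$ of \Cref{Lp of special}. Taking hypercohomology factors the map $\W^0H^k\to \overline{H}^k_{(2)}(X,\E_\C)$ as follows:
\[
\W^0H^k(X,\E_\C)\to \W^{-\rho+\varepsilon}H^k(X,\E_\C)\cong IH^k(\X,\E_\C)\to H^k_{(2)}(X,\E_\C)\to \overline{H}^k_{(2)}(X,\E_\C).
\]
\Cref{p to 2 surjectivity} says this composite is surjective.

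Next I would invoke Zucker's conjecture (Looijenga, Saper--Stern). In the Hermitian case, $L^2$-cohomology is finite dimensional, so $H^k_{(2)}=\overline{H}^k_{(2)}$ equals the space of $L^2$-harmonic forms. The natural map $\pi_*\W^{-\rho+\varepsilon}C(\E_\C)\to \pi_*A_{(2)}(\E_\C)$ is a quasi-isomorphism, because both complexes compute $IC_{\X}(\E_\C)$. Therefore the arrow $IH^k(\X,\E_\C)\to \overline{H}^k_{(2)}(X,\E_\C)$ is an isomorphism, and the surjectivity of the composite forces $\W^0H^k(X,\E_\C)\to IH^k(\X,\E_\C)$ to be surjective. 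Combined with the first paragraph, this yields the theorem. Canonicity holds because every map used is canonical up to the choice of $\alpha_0$, a scalar that does not affect the image.

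The main obstacle is this last compatibility step: checking that the inclusion $\W^{-\rho+\varepsilon}C(\E_\C)\hookrightarrow A_{(2)}(\E_\C)$ of \Cref{Lp of special}(ii.), pushed to $\X$, is exactly a quasi-isomorphism realizing Zucker's isomorphism. It must also agree with the identification of \Cref{Theorem: Lie-Hodge correspondence}(i.) up to an automorphism, which is harmless for surjectivity. My first attempt would be the following argument. Both $\pi_*\W^{-\rho+\varepsilon}C$ and $\pi_*A_{(2)}$ satisfy the Deligne axioms characterizing $IC$, with the latter by Looijenga/Saper--Stern. A morphism between two such complexes that is the identity on $X$ is then necessarily an isomorphism, since $IC$ has only scalar endomorphisms restricting from the open stratum. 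This sidesteps any explicit local stalk computation.
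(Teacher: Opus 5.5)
Your proposal is correct and follows the paper's own proof. Injectivity, and the identification of the image with $Gr^W_{k+w}\W^0H^k(X,\E)$, come from \Cref{Image of MHS} and \Cref{Injection}; surjectivity is checked after base change to $\C$ by combining Zucker's conjecture with \Cref{p to 2 surjectivity}. Your only addition is to spell out why the map through $A_{(2)}(\E_\C)$ realizes Zucker's isomorphism, using the rigidity of $IC$ under Deligne's axioms and the finite-dimensionality that makes $H^k_{(2)}=\overline{H}^k_{(2)}$, whereas the paper simply asserts these as natural isomorphisms.
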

\begin{proof}
By \Cref{Theorem: Lie-Hodge correspondence} and \Cref{weightless and intersection}, the map 
    \begin{equation*}
        \W^0H^k(X,\E)\to IH^k(\X;\E)
    \end{equation*}
    induced by the map of sheaves
    \begin{equation*}
        \pi_*\W^0C(\E)\to \pi_*\W^{-\rho+\varepsilon}C(\E)\cong IC_{\X}(\E)
    \end{equation*}
    is a morphism of mixed Hodge structures, and the image is identified with $Gr^W_{k+w}\W^0H^k(X,\E)$. It suffices to show that after base change to $\C$, the map 
    \begin{equation*}
        \W^0H^k(X,\E\otimes_\Q\C)\to IH^k(\X;\E\otimes_\Q\C)
    \end{equation*}
is surjective. By Zucker's conjecture, there are natural isomorphisms
    \begin{equation*}
        \pi_*\W^{-\rho+\varepsilon}C(\E\otimes_\Q\C)  \cong IC_{\X}(\E\otimes_\Q\C)\cong A_{(2)}(\E).
    \end{equation*}
    Then the surjectivity follows by \Cref{p to 2 surjectivity}.
\end{proof}

\begin{Remark}\label{Decomposition is canonical}
    Let $X'\twoheadrightarrow \X$ be any resolution of singularities. It follows from \cite[Proposition 4.3.3]{nair2015weightless} that there is a canonical map $H^k(\hX;\Q)\to H^k(X';\Q)$ whose image is exactly $Gr^W_kH^k(\hX;\Q)$. In particular, $IH^k(\X;\Q)$ is canonically a subspace of $H^k(X';\Q)$. For general varieties, such an identification by the decomposition theorem is not canonical.
\end{Remark}

\begin{Remark}\label{algebraic proof?}
    We sketch how a potential local and algebraic proof might suggest. The general theory of weight truncations implies that the complex $w_{>\underline{\dim}}IC_X(\Q)$ has weights $\leq 0$. Ideally, one could conclude the proof if one shows further that it has weights $\leq -1$. For simplicity assume that $G^{ad}$ is $\Q$-simple and has $\Q$-rank $1$. In this case, the boundary strata of $\X$ consist of isolated compact Hermitian locally symmetric varieties of smaller dimensions. Let $i:S\hookrightarrow \X$ be such a stratum. Using the theory of local Hecke operators, the weight filtration on the local intersection cohomology $H^ki^*IC_X(\Q)$ is split over $\Q$ (\cite[(6.4)]{looijenga1991weights}), and $H^ki^*w_{\leq \underline{\dim}}IC_X(\Q)$ consists of the pieces of weights $\leq 0$. The condition that the complex $w_{>\underline{\dim}}IC_X(\Q)$ has weights $\leq -1$ then implies that $H^ki^*IC_X(\Q)$ has weights $\leq k-1$ when $k\neq 0$.
\end{Remark}

\subsection{Operations on Intersection Cohomology}
We restrict to the case for $\E=\Q$ in this subsection, even though the general coefficients follow verbatim. There is an isomorphism $\W^0C(\Q)\cong \Q_{\hX}$.

\begin{Proposition}\label{cup product on IH}
There exists a canonical cup product on $IH^*(\X;\Q)$ compatible with the canonical maps
    \begin{equation*}
        H^k(\X;\Q) \to IH^*(\X;\Q)\to H^*(X;\Q).
    \end{equation*}
Moreover, in the dual dimensions, the cup product
\begin{equation*}
    IH^k(\X;\Q)\times IH^{2n-k}(\X;\Q)\xrightarrow{\cup} IH^{2n}(\X;\Q)\cong \Q(-n)
\end{equation*}
is the perfect pairing induced by the Poincaré-Verdier duality on $IC_X$.    
\end{Proposition}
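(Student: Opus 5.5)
The plan is to transport the ring structure of $H^*(\hX;\Q)$ to $IH^*(\X;\Q)$ through the surjection of \Cref{Comparison to RBS}. Since $\W^0C(\Q)\cong\Q_{\hX}$, we have $\W^0H^*(X;\Q)=H^*(\hX;\Q)$, which carries the usual cup product. By \Cref{Theorem: Lie-Hodge correspondence}(ii) this product is induced by the ring object $EC_{\X}\cong\pi_*\Q_{\hX}$ of \Cref{Properties of EC}(i), so it is a morphism of mixed Hodge structures $H^k\otimes H^l\to H^{k+l}$. Weights are strictly compatible and $H^k(\hX)$ has weights $\le k$ (\Cref{weightless and intersection}). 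Hence $W_{k-1}H^k\cdot H^l\subseteq W_{k+l-1}H^{k+l}$, and the cup product descends to a graded product on $\bigoplus_k Gr^W_kH^k(\hX;\Q)$. By \Cref{Comparison to RBS} this graded ring is identified with $IH^*(\X;\Q)$, and we define the product on $IH^*$ this way. Associativity, graded commutativity and unit are inherited. Canonicity holds because every map involved comes from the sheaf maps $\Q_{\X}\to EC_{\X}\to IC_{\X}\to j_*\Q_X$.

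Compatibility with $H^*(\X)\to IH^*(\X)\to H^*(X)$ follows from the factorization
\[
\Q_{\X}\to EC_{\X}\to IC_{\X}\to j_*\Q_X,
\]
since the outer two maps and their composites are ring maps by \Cref{Properties of EC}(i),(iii). For the first map, $H^k(\X)\to H^k(\hX)$ is a ring map. Its composite to $IH^k$ is the projection to $Gr^W_k$, which is multiplicative by construction. For the second map, $IH^k\to H^k(X)$ is the restriction to $X$. Precomposed with the surjection $H^k(\hX)\to IH^k$, it equals the ring map $H^k(\hX)\to H^k(X)$. Since $H^*(\hX)\to IH^*$ is surjective and the products on both sides are defined through it, $IH^*\to H^*(X)$ is multiplicative. (Note that $IH^k\to H^k(X)$ need not be injective; this is why we define the product via $Gr^W$ rather than by restriction to $X$.)

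The main obstacle is the duality statement: the induced pairing $IH^k\times IH^{2n-k}\to IH^{2n}\cong\Q(-n)$ must equal the Verdier pairing. First, $IH^{2n}=H^{2n}(\X)=H^{2n}(\hX)$, each with fundamental class. I would compare the two pairings through a resolution $X'\to\X$, using \Cref{Decomposition is canonical}. There, $H^*(\hX)\to H^*(X')$ is a ring map (pullback of $EC$ by \Cref{Properties of EC}(iii)) with image $Gr^W_*H^*(\hX)\cong IH^*$, so our product on $IH^*$ is the restriction of the cup product on $H^*(X')$. Poincaré duality on $X'$, restricted to the direct summand $IH^*(\X)$ of the decomposition theorem, is the Verdier pairing on $IC_{\X}$, as in de Cataldo--Migliorini. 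One must check that the canonical image coincides with a summand on which the restricted pairing is the intersection pairing. I would prove this by showing that the composite $IC_{\X}\to Rf_*\Q_{X'}[\ ]\to IC_{\X}$ is the identity (self-duality of the decomposition) together with compatibility of the trace maps. Perfectness then follows from Verdier self-duality of $IC_{\X}$.
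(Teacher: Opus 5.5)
\textbf{Verdict: the duality step has a genuine gap.} Your construction of the product is correct and is the paper's argument with the details written out: $H^k(\hX;\Q)$ has weights $\le k$, so the cup product descends to $\bigoplus_k Gr^W_kH^k\cong IH^*$. The same holds for the compatibility with $H^*(\X;\Q)\to IH^*(\X;\Q)\to H^*(X;\Q)$.

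The gap is in the duality statement, where you leave the paper's route and pass through a resolution $f\colon X'\to\X$. Fix a splitting $f_*\Q_{X'}\cong IC_{\X}\oplus C$ in $D^b\MHM(\X)$, with section $s$, projection $p$, and $C$ supported on $\X-X$.

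\begin{itemize}
\item What is fine: the Poincar\'e pairing of $X'$ restricted to $s_*(IH^*)$ is the Verdier pairing, for \emph{every} splitting. This holds because $\Hom(IC_{\X},\mathbb{D}IC_{\X}[-2n](-n))\cong\Q$ is detected on $X$.
\item What is missing: your product lives on $R:=\im(H^*(\hX;\Q)\to H^*(X';\Q))$. A priori $R=\{s_*x+\beta(x)\}$ is only the graph of some Hodge morphism $\beta\colon IH^*\to H^*(\X,C)$. Even this requires $p\circ f^\sharp=a$, where $a\colon EC_{\X}\to IC_{\X}$, and you do not address it.
\item Consequence: the Poincar\'e pairing on $R$ then differs from the Verdier pairing by $\langle s_*x,\beta y\rangle+\langle\beta x,s_*y\rangle+\langle\beta x,\beta y\rangle$.
\item Why your proposed fix does not help: that $IC_{\X}\to f_*\Q_{X'}\to IC_{\X}$ is the identity is just the definition of a splitting. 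Compatibility of traces only matches the two targets in degree $2n$. Neither statement says anything about $\beta$ or about these extra terms.
\end{itemize}

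What you need is a rigidity statement: a pairing on $\Q_{\hX}$ (equivalently on $EC_{\X}$) with values in the dualizing complex is determined by its restriction to $X$.

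\textbf{The paper's argument.} It proves this topologically. The group $\Hom(\Q_{\hX}\otimes\Q_{\hX},\omega_{\hX}[-2n])\cong H^{2n}(\hX;\Q)^*$ is one-dimensional. Hence $tr_{\hX}\circ m_{\hX}$ agrees with the Goresky--Harder--MacPherson duality pairing $\W^{-\rho}C(\Q)\otimes\W^{-\rho+\varepsilon}C(\Q)\to\omega_{\hX}[-2n]$ precomposed with the inclusions. That pairing pushes forward to the Verdier pairing on $IC_{\X}$. No resolution is needed.

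\textbf{A Hodge-theoretic repair of your route.} Both $\mathbb{D}(f^\sharp)\circ\delta_{X'}\circ f^\sharp$ and $\mathbb{D}(a)\circ\delta_{IC}\circ a$ lie in $\Hom(EC_{\X},\mathbb{D}EC_{\X}[-2n](-n))$. Here $\delta$ denotes the respective self-duality isomorphisms. Restriction to $X$ is injective on this group, for the following reason. On each boundary stratum $S$, $i_S^*EC_{\X}\in{}^wD^{\leq \dim S}(S)$. Meanwhile $i_S^!\mathbb{D}EC_{\X}[-2n](-n)=\mathbb{D}(i_S^*EC_{\X})[-2n](-n)$ has weights $\ge 2n-\dim S>\dim S$. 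So the relevant $\Hom$ from the boundary vanishes. This gives $\langle f^\sharp\tilde x,f^\sharp\tilde y\rangle_{X'}=\langle a\tilde x,a\tilde y\rangle_{IC}$ directly. With it, the rest of your argument goes through, and you no longer need to compare $R$ with a decomposition summand.
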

\begin{proof}
    By \Cref{Properties of EC} and \Cref{Theorem: Lie-Hodge correspondence}, the cup product on $H^*(\hX;\Q)$ is a morphism of mixed Hodge structures. Therefore, it defines a cup product on $IH^*(\X;\Q)$ by \Cref{Comparison to RBS}. Moreover, since the cup products commute with the canonical maps
    \begin{equation*}
         H^*(\X;\Q)\to H^*(\hX;\Q)\to H^*(X;\Q), 
    \end{equation*}
they are also compatible with $IH^*(\X;\Q)$. Finally, we must show that this cup product realizes the Poincaré duality defined via the Verdier duality on $IC_X$. It suffices to show that the cup product on $H^*(\hX;\Q)$ is compatible with the intersection product on $IH^*(\X;\Q)$ along the surjection $H^*(\hX;\Q)\to IH^*(\X;\Q)$. We show that there is a commutative diagram 
\begin{equation*}
\begin{tikzcd}
\Q_{\hX}\otimes\Q_{\hX} \arrow[r, "m_{\hX}"] \arrow[d]               & \Q_{\hX} \arrow[d, "tr_{\hX}"] \\
\W^{-\rho}C(\Q)\otimes \W^{-\rho+\varepsilon}C(\Q) \arrow[r, "\Phi"] & {\omega_{\hX}[-2n]}        .   
\end{tikzcd}
\end{equation*}
Here $m_{\hX}$ is the multiplication map, adjoint to the identity on $\Q_{\hX}$, realizing the cup product. $\omega_{\hX}$ is the dualizing complex on $\hX$ (It is isomorphic to the weighted complex $\W^{-2\rho+\varepsilon}C(\Q)$ by \cite[(19.4)]{goresky1994weighted}), and $n:=\dim(X)$. The right map $tr_{\hX}$ is induced by the trace map $tr_{\hX}:H^{2n}(\hX;\Q)\xrightarrow{\sim} \Q(-n)$. The map $\Phi$ is the Verdier duality on weighted cohomology (\cite[(21.4)]{goresky1994weighted}), and its pushforward onto $\X$ realizes the Verdier duality on $IC_{\X}$, since they are both the unique map extending the identity on $\Q_X$. Consequently, the commutativity of this diagram implies that the cup products we define on $IH^*(\X;\Q)$ realize the Poincaré duality on it in the dual dimensions. 

In order to show the commutativity, we note that there is a canonical isomorphism
\begin{equation*}
    \Hom(\Q_{\hX}\otimes \Q_{\hX}, \omega_{\hX}[-2n])\cong H^{2n}_c(\hX;\Q)^*(\cong H^{2n}(\hX;\Q)^*).
\end{equation*}
Since $H^{2n}_c(X;\Q)\cong H^{2n}(\hX;\Q)\cong \Q(-n)$,  any morphism between these two sheaves is determined by the restriction to $X$. It then suffices to check the commutativity on $X$, where all sheaves reduce to the constant sheaf $\Q_X$. 
\end{proof}

Now we construct the pullback of the intersection cohomology, and show that it satisfies the usual properties of the pullback. Let $H$ be another connected reductive group over $\Q$, such that the associated symmetric space is Hermitian. Let $\Gamma_H\subseteq H(\Q)$ be a neat arithmetic subgroup, and $X_H:=\Gamma_H\backslash H(\R)/K_H$ be the Hermitian locally symmetric variety. It follows from \cite[Theorem 2]{kiernan1972satake} that any morphism $\iota:X_H\to X$ extends uniquely to a morphism $\iota^{BB}: \X_H\to \X$. In particular, any morphism of Shimura data induces such a map of connected Hermitian locally symmetric varieties and extends to the Baily--Borel compactification. On the other hand, reductive Borel--Serre compactification is not functorial. However, its cohomology admits pullbacks, by the identification \Cref{Theorem: Lie-Hodge correspondence} and \Cref{Properties of EC}. 

\begin{Proposition}\label{pullback on IH}
Let $n=\dim(X), m=\dim(X_H), r=n-m$. 
\begin{itemize}
    \item [(i.)] There exists a morphism of graded rings of Hodge structures
\begin{equation*}
     \iota^*:IH^k(\X;\Q)\to IH^k(\X_H;\Q),
\end{equation*}
 commuting with $(\iota^{BB})^*$ on $H^k(\X;\Q)$ and $\iota^*$ on $H^k(X;\Q)$.

 \item[(ii.)]  The Gysin map
   \begin{equation*}
       \iota_*: IH^k(\X_H;\Q)\to IH^{k+2r}(\X;\Q(r)),
   \end{equation*}
   induced by the Poincaré duality of $\iota^*$, satisfies the projection formula
\begin{equation*}
    \iota_*(\alpha\cup \iota^*\beta)=\iota_*\alpha\cup \beta,
\end{equation*}
for $\alpha\in IH^*(\X_H;\Q)$ and $\beta\in IH^*(\X;\Q)$.
\end{itemize}
\end{Proposition}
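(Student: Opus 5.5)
The plan is to define $\iota^*$ by transporting the pullback on weightless cohomology through \Cref{Comparison to RBS}. By \Cref{Theorem: Lie-Hodge correspondence}(ii) with $\E=\Q$, we have $H^k(\hX;\Q)\cong \mathbb{H}^k(\X,EC_{\X})$ as mixed Hodge structures, and likewise for $\X_H$.

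The map $\iota^{BB}:\X_H\to\X$ sends the open stratum $X_H$ into $X$, so the image of the irreducible $\X_H$ meets the smooth locus of $\X$. Hence \Cref{Properties of EC}(iii) gives a ring morphism $(\iota^{BB})^\sharp:EC_{\X}\to\iota^{BB}_*EC_{\X_H}$ in $D^b\MHM(\X)$. Taking hypercohomology yields a morphism of graded rings of mixed Hodge structures $EH^*(\X)\to EH^*(\X_H)$. The commutative diagram in \Cref{Properties of EC}(iii) shows that this map is compatible with $(\iota^{BB})^*$ on $H^*(\X;\Q)$ via $\Q_{\X}\to EC_{\X}$, and with $\iota^*$ on $H^*(X;\Q)$ via $EC\to j_*\Q$.

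Morphisms of mixed Hodge structures are strict, so this map induces a morphism on $Gr^W_k$. Through \Cref{Comparison to RBS} this gives $\iota^*:IH^k(\X;\Q)\to IH^k(\X_H;\Q)$. The ring structure passes to $Gr^W$ because cup product is a morphism of MHS and $EH^k$ has weights $\le k$, so $W_{k-1}EH^*$ is an ideal. By \Cref{cup product on IH}, this is exactly the ring structure on $IH^*$.

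For compatibility in (i), I will use that $H^k(\X;\Q)$ has weights $\le k$ and $H^k(X;\Q)$ has weights $\ge k$.
\begin{itemize}
\item The map $H^k(\X)\to IH^k(\X)$ factors as $H^k(\X)\to EH^k(\X)\to Gr^W_kEH^k(\X)$.
\item The map $IH^k(\X)\to H^k(X)$ is induced from $EH^k(\X)\to H^k(X)$. This factors through the top weight quotient, because the $W_{k-1}$ part maps to zero in $H^k(X)$, whose weights are $\ge k$.
\end{itemize}
Both compatibilities then follow from naturality of the $EC$-pullback. The main subtlety here is that the isomorphism of \Cref{Comparison to RBS} is compatible with these factorizations. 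This follows from \Cref{Theorem: Lie-Hodge correspondence}(iii) together with \Cref{weightless and intersection}.

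For (ii), define $\iota_*$ as the transpose of $\iota^*$ under the perfect pairings of \Cref{cup product on IH}. That is, $\langle\iota_*\alpha,\gamma\rangle_{\X}=\langle\alpha,\iota^*\gamma\rangle_{\X_H}$, with Tate twist $\Q(r)$ coming from $IH^{2n}\cong\Q(-n)$ and $IH^{2m}\cong\Q(-m)$. The projection formula is then formal. For any test class $\gamma$,
\[
\langle\iota_*(\alpha\cup\iota^*\beta),\gamma\rangle=\langle\alpha\cup\iota^*\beta,\iota^*\gamma\rangle=\langle\alpha,\iota^*(\beta\cup\gamma)\rangle=\langle\iota_*\alpha,\beta\cup\gamma\rangle=\langle\iota_*\alpha\cup\beta,\gamma\rangle.
\]
This uses that $\iota^*$ is a ring map, plus associativity and graded-commutativity of cup product. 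Nondegeneracy of the pairing concludes.

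I expect the main obstacle to be justifying that the pairing on $IH$ coincides with the top-degree cup product. \Cref{cup product on IH} supplies this. A further point needing care is that the pairing $\langle x,y\rangle$ is literally $tr(x\cup y)$ in $IH^{2n}$, so that associativity can be used. This holds since $Gr^W_{2n}EH^{2n}(\X)=IH^{2n}(\X)\cong\Q(-n)$, and $\iota^*$ does not need to respect the trace.
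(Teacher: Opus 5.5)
Your proposal is correct and follows the paper's route: you pull back on weightless cohomology via $EC_{\X}\to\iota^{BB}_*EC_{\X_H}$ from \Cref{Properties of EC}(iii), descend to the top weight quotient identified with $IH^*$ by \Cref{Comparison to RBS}, and get the projection formula formally from the Poincaré duality of \Cref{cup product on IH} together with multiplicativity of $\iota^*$. You also supply details the paper leaves implicit: that $\bigoplus_k W_{k-1}EH^k$ is an ideal, and that the maps from $H^k(\X)$ and to $H^k(X)$ factor compatibly through $Gr^W_k$.
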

\begin{proof}
    The existence of $\iota^*$ follows from \Cref{Comparison to RBS} and \Cref{Properties of EC}. The map $\iota^*$ on $IH^k$ commutes with $\iota^*$ on $H^k$ and the cup product, since so does
    \begin{equation*}
        \iota^*:H^k(\hX;\Q)\to H^k(\hX_H;\Q).
    \end{equation*}

For $\alpha\in IH^k(\X_H;\Q)$ and $\beta\in IH^{2m-k}(\X;\Q)$, the projection formula follows by the definition of the Gysin map and Poincaré duality (\Cref{cup product on IH}).
In general, it follows by the fact that $\iota^*$ commutes with cup products. 
\end{proof}

Now we introduce the notion of modular intersection class. Let $H\subseteq G$ be a subgroup, and $\iota:X_H\to X$ be a Hermitian locally symmetric subvariety. 
\begin{Definition}\label{modular intersection class}
    Let $[X_H]\in H^0(X_H;\Q)\cong IH^0(\X_H;\Q)$ be the fundamental class of $X_H$. Then the \textit{modular intersection class} of $X_H$ is defined as
    \begin{equation*}
        \Icl (X_H):=\iota_*[X_H]\in IH^{2r}(\X;\Q(r)).
    \end{equation*}
\end{Definition}

\begin{Proposition}\label{properties of Intersection cycle}
        The image of the modular intersection class $\Icl(X_H)$ under the canonical map
    \begin{equation*}
        IH^{2r}(\X;\Q(r))\twoheadrightarrow W_{2r}H^{2r}(X;\Q(r))\hookrightarrow H^{2r}(X;\Q(r))
    \end{equation*}
    is the usual cycle class of $X_H$ in $X$. Let 
    \begin{equation*}
        \deg_X:IH^{2n}(\X;\Q(n))\cong H^{2n}_c(X;\Q(n))\to \Q
    \end{equation*}
    be the degree map (or trace map), then for any $\beta\in IH^{2m}(\X;\Q(m))$,
    \begin{equation*}
        \deg_X(\Icl(X_H)\cup \beta)=\deg_{X_H}(\iota^*\beta).    \end{equation*}
\end{Proposition}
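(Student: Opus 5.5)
The plan is to prove the two assertions separately: the first by functoriality of the restriction to the open part, the second directly from the projection formula in \Cref{pullback on IH}.

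\textbf{Second assertion.} Apply the projection formula of \Cref{pullback on IH}(ii) with $\alpha=[X_H]$, the unit of $IH^*(\X_H;\Q)$:
\begin{equation*}
    \Icl(X_H)\cup\beta=\iota_*([X_H]\cup\iota^*\beta)=\iota_*\iota^*\beta .
\end{equation*}
It then suffices to check that $\deg_X\circ\iota_*=\deg_{X_H}$ on $IH^{2m}(\X_H;\Q(m))$. By construction $\iota_*$ is the Poincaré dual of $\iota^*$ with respect to the pairings of \Cref{cup product on IH}. Pairing against $1\in IH^0(\X;\Q)$ gives
\begin{equation*}
    \deg_X(\iota_*\gamma)=\deg_X(\iota_*\gamma\cup 1)=\deg_{X_H}(\gamma\cup\iota^*1)=\deg_{X_H}(\gamma),
\end{equation*}
using that $\iota^*$ is a ring map, so $\iota^*1=1$. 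The only point to verify is that the trace used in \Cref{cup product on IH} is the same $\deg$ obtained through $IH^{2n}\cong H^{2n}_c(X)$. This holds because both are induced by $tr_{\hX}$, and $H^{2n}_c(X)\cong H^{2n}(\hX)$.

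\textbf{First assertion.} By \Cref{Comparison to RBS} and \Cref{Decomposition is canonical}, the composite $IH^{2r}(\X)\to H^{2r}(X)$ is induced by restricting $H^{2r}(\hX)\to H^{2r}(X)$. Its image is $W_{2r}H^{2r}(X)$, the image of the pure quotient, since $H^{2r}(X)$ has weights $\ge 2r$.
\begin{itemize}
    \item[(a)] \emph{Compatibility of the pullback.} The pullback $\iota^*$ on $IH$ is compatible with $\iota^*:H^*(X)\to H^*(X_H)$, by \Cref{pullback on IH}(i).
    \item[(b)] \emph{Description on the open part.} The pairing $H^{2n-2r}_c(X)\times H^{2r}(X)\to\Q$ determines a class in $H^{2r}(X)$. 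So I would show that for every $c\in H^{2n-2r}_c(X;\Q(n-r))$,
    \begin{equation*}
        \deg_X(c\cup \Icl(X_H)|_X)=\deg_{X_H}(\iota^*c).
    \end{equation*}
    The right-hand side is the defining property of the usual cycle class.
    \item[(c)] \emph{Lifting compactly supported classes.} The map $H_c(X)\to IH(\X)$ comes from $j_!\Q\to IC$. Lift $c$ to $\tilde c\in IH^{2n-2r}(\X)$. The cup product with a class restricted from $IH$ to $X$ should agree with the $IH$ cup product followed by $\deg_X$. This compatibility with the canonical maps is \Cref{cup product on IH}.
    \item[(d)] \emph{Conclusion.} The identity then reduces to the second assertion with $\beta=\tilde c$, together with $\iota^*\tilde c$ restricting to $\iota^*c$ in $H_c(X_H)$.
\end{itemize}

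\textbf{Main obstacle.} The hardest step is (c)–(d): the compatibility of $\iota^*$ on $IH$ with pullback on compactly supported cohomology. This makes sense because $\iota$ is proper on the open parts, being a closed subvariety. I would deduce it from \Cref{Properties of EC}(iii), using the commutative diagram $\Q_{\X}\to EC\to j_*\Q$. Dualizing that diagram gives the corresponding compatibility for $j_!\Q\to EC$, and naturality of the pullback $f^\sharp$ with respect to extension by zero on the proper open map then gives the required compatibility.
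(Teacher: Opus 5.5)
Your proposal is correct in substance and follows the paper's route: the second assertion is the paper's projection-formula argument $\Icl(X_H)\cup\beta=\iota_*\iota^*\beta$, and your pairing argument for the first assertion unpacks, in dual form, the paper's one-line claim that the Gysin map on $IH^*$ is compatible with the usual Gysin map on $H^*(X)$ (equivalently, that $\iota^*$ on $IH^*$ is compatible with $\iota^*$ on $H^*_c$).

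The one thing to fix is your justification of that compatibility. $EC_{\X}$ is not self-dual, so dualizing $\Q_{\X}\to EC_{\X}\to j_*\Q_X$ gives a statement about the Verdier dual of $EC_{\X}$, not about $j_!\Q_X\to EC_{\X}$. Argue by adjunction instead:
\begin{itemize}
\item properness of $\iota$ gives $(\iota^{BB})^{-1}(X)=X_H$, hence $\iota^{BB}_*j_{H!}\Q_{X_H}\cong j_!\iota_*\Q_{X_H}$;
\item two maps $j_!\Q_X\to\iota^{BB}_*EC_{\X_H}$ agree as soon as their restrictions to $X$ agree;
\item here both restrictions are the unit $\Q_X\to\iota_*\Q_{X_H}$, by \Cref{Properties of EC}(iii).
\end{itemize}
So the square already commutes on $H^*(\hX)$, and hence on the top weight quotients.
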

\begin{proof}
    The first statement holds since the Gysin map on $IH^*$ commutes with the usual Gysin map on $H^*$. The second statement follows by the projection formula
    \begin{equation*}
        \Icl(X_H)\cup \beta= \iota_*\iota^*\beta.
    \end{equation*}
\end{proof}

\begin{Remark}\label{modular intersection class is universal}
    In view of \Cref{Decomposition is canonical}, the modular intersection class is a universal cohomology class for modular subvarieties in all resolutions of $\X$, in particular, in any smooth toroidal compactification. On the other hand, the map from modular cycles to $IH^{2r}(\X;\Q(r))$ does not factor through the Chow group $CH^r(X)_\Q$ of the open variety. For example, when $X\cong\A^1_\C-\{0,1\}$ is a smooth open modular curve, $CH^1(X)_{\Q}=0$; while the modular intersection class of a special point in $IH^2(\X;\Q(1))\cong \Q$ is nonzero.
\end{Remark}

Our construction of the pullback agrees with that in \cite[Chapter 9]{bergeron2006proprietes} using $L^2$-harmonic forms as follows. In \cite[Lemma 3.10]{bergeron2006proprietes}, it is shown that every $L^2$-harmonic form on $X$ is bounded, and therefore it restricts to a closed smooth bounded form on $X_H$ (not harmonic in general), then one takes its $L^2$-cohomology class. Let $j:X\hookrightarrow \X$. By \Cref{Properties of EC}, there exists a commutative diagram of sheaves on $\X$
\begin{equation*}
    \begin{tikzcd}
\pi_*\C_{\hX} \arrow[r] \arrow[d] & \iota^{BB}_*\pi_{H*}\C_{\hX_H} \arrow[d] \\
j_{*}\C_{X} \arrow[r]             & \iota^{BB}_*j_{H*}\C_{X_H}      ,        
\end{tikzcd}
\end{equation*}
The bottom arrow commutes with the restriction of the smooth differential forms on the open varieties by the functoriality of the de Rham isomorphism of smooth manifolds. Since our construction is a descent from the pullback on the cohomology of the reductive Borel--Serre compactification, it agrees with the analytic construction, which is essentially the restriction of differential forms. 

With this observation, we can show that the analytic geometric volumes agree with the topological degrees of the modular intersection classes.
\begin{Proposition}\label{geometric volume=degree}
     Let $\iota: X_H\rightarrow X$ be a modular subvariety of dimension $m$. Then for any smooth bounded closed $2m$-form $\alpha$ on $X$, which defines a class $[\alpha]$ in the $L^2$-cohomology/intersection cohomology, we have
    \begin{equation*}
        \mathrm{vol}(X_H,\alpha):=\int_{X_H} \iota^*\alpha=\deg_{X}(\Icl(X_H)\cup [\alpha]).
    \end{equation*}
\end{Proposition}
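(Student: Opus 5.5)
The plan is to reduce the identity to \Cref{properties of Intersection cycle} with $\beta=[\alpha]$. That proposition gives
\begin{equation*}
    \deg_X(\Icl(X_H)\cup[\alpha])=\deg_{X_H}(\iota^*[\alpha]),
\end{equation*}
where $\iota^*$ is the pullback on intersection cohomology from \Cref{pullback on IH}. It then remains to show that $\deg_{X_H}(\iota^*[\alpha])=\int_{X_H}\iota^*\alpha$. I would split this into two steps.

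\textbf{Step 1: identify $\iota^*[\alpha]$ analytically.} The class $[\alpha]\in IH^{2m}(\X;\C)\cong H^{2m}_{(2)}(X;\C)$ is represented by a bounded smooth closed form. Such a form is $L^2$ because $X$ has finite volume, and $d\alpha=0$. By the comparison just discussed (agreement with Bergeron's construction), $\iota^*[\alpha]$ is the $L^2$-class of the restricted form $\iota^*\alpha$, which is again bounded, smooth and closed on $X_H$. The comparison in the text is stated for classes coming through $\pi_*\C_{\hX}$ via harmonic representatives. So I would first replace $\alpha$ by its $L^2$-harmonic representative $h$. Then $\alpha-h=d\gamma$ with $\gamma$ in $L^2$, and I need the restriction to $X_H$ to respect this cohomology. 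The cleanest way is sheaf-theoretic: bounded closed forms give classes in the weighted/$L^p$ complexes of \Cref{subsection: An Eventual Quasi-isomorphism and Surjectivity}. Since $L^\infty\subset L^p$ for all $p$, $[\alpha]$ lifts to $H^{2m}_{(p)}(X)\cong\W^0H^{2m}(X)=H^{2m}(\hX;\C)$ for large $p$. Pullback on $H^*(\hX)$ is compatible with restriction of forms on the open part, by the commutative square of sheaves above.

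\textbf{Step 2: the degree map is integration.} In top degree, $IH^{2m}(\X_H)\cong H^{2m}_c(X_H)\cong H^{2m}(\hX_H)$, and $\deg_{X_H}$ is the trace. I need to show that for a bounded closed top-degree form $\theta$ on $X_H$ whose class lies in $H^{2m}_{(p)}(X_H)\cong H^{2m}(\hX_H)$, the trace equals $\int_{X_H}\theta$. First, integration is well defined on $L^1$ top forms. Next, integration kills $d\gamma$ for $\gamma$ in the $L^p$-domain, by $L^p$-Stokes on the complete finite-volume manifold $X_H$; this follows from a Gaffney cutoff argument, since $d\gamma$ is $L^1$ and $\gamma$ is $L^1$ by finite volume. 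Finally, on compactly supported forms integration agrees with the trace on $H^{2m}_c(X_H)$. As $H^{2m}_c\to H^{2m}(\hX_H)$ is an isomorphism, both functionals are the unique ones agreeing on compactly supported classes, so they coincide.

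I expect the main obstacle to be this Stokes step. Specifically, one must justify that $\int_{X_H}d\gamma=0$ for $L^p$ forms $\gamma$ with $L^p$ differential, and that $L^p$-cohomology in top degree is detected by compactly supported forms. I would use cutoff functions $\chi_R$ with $|d\chi_R|\le 1/R$, which exist by completeness. Then $\int d(\chi_R\gamma)=0$, and $\int d\chi_R\wedge\gamma\to0$ by Hölder and finite volume.
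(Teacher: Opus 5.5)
This is essentially the paper's argument: both reduce via \Cref{properties of Intersection cycle} to $\deg_{X_H}(\iota^*[\alpha])$, both take $\iota^*[\alpha]=[\iota^*\alpha]$ from the preceding observation that the pullback descends from $H^*(\hX)$ and is realized by restricting forms, and both identify the top-degree degree map with integration by comparing with $H^{2m}_c(X_H)$ via Stokes. The only difference is that you route the class through $H^{2m}_{(p)}(X)\cong H^{2m}(\hX;\C)$ for large $p$ and write out the cutoff proof of Stokes, whereas the paper stays in $L^2$-cohomology and simply invokes $L^2$-Stokes.
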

\begin{proof}
    By \Cref{properties of Intersection cycle}, we have
    \begin{equation*}
        \deg_{X}(\Icl(X_H)\cup [\alpha])=\deg_{X_H}(\iota^*[\alpha]).
    \end{equation*}
    By definition, 
    \begin{equation*}
        \deg_{X_H}(\iota^*[\alpha])=\int_{X_H} \omega,
    \end{equation*}
    where $\omega$ is any compactly supported differential form on $X_H$ representing $\iota^*[\alpha]$, under the isomorphism
    \begin{equation*}
        IH^{2m}(\X_H;\C)\cong H^{2m}_c(X_H;\C).
    \end{equation*}
The $L^2$-cohomology is isomorphic to the compactly supported cohomology on the top degree via integration along the fundamental class by $L^2$-Stokes formula
    \begin{equation*}
    \begin{aligned}
         H^{2m}_c(X_H;\C)&\cong H^{2m}_{(2)}(X_H)\cong \C.
    \end{aligned}
    \end{equation*}
    Since $\iota^*[\alpha] =[\iota^*\alpha]$ by the above observation, the image of $[\omega]$ is $[\iota^*\alpha]$ under the above isomorphism, and we have 
    \begin{equation*}
\deg_{X_H}(\iota^*[\alpha])=\int_{X_H}\omega=\int_{X_H}\iota^*\alpha.
    \end{equation*}
\end{proof}

\section{Further Applications}\label{Section: Applications}
In this section, we present two applications of our results. We adopt the convention and notation from the original papers, and therefore may differ from the previous sections. In \Cref{subsection: Auto Lef}, we follow the convention in \cite{nair2023automorphic}; while in \Cref{subsection: Coh Siegel-Weil}, we follow the convention in \cite{kudla49special}, and this is the only part we adopt the adelic language.
\subsection{Automorphic Lefschetz Properties for Intersection Cohomology}\label{subsection: Auto Lef}
In this subsection, we extend the results in \cite{venkataramana2001cohomology} to the intersection cohomology of (noncompact) Shimura varieties. We will only consider cohomology with coefficients in $\C$ as in \cite{venkataramana2001cohomology} and \cite{nair2023automorphic}, we will simply abbreviate $H^k(X;\C)$ by $H^k(X)$ for an algebraic variety $X$. All maps on cohomology groups will be morphisms of (filtered colimits of) mixed Hodge structures up to proper Tate twists, and therefore we will ignore them in this subsection.

We summarize the construction of the restriction map, following \cite[\S 2, \S 3]{nair2023automorphic}. Let $G$ be a connected semisimple algebraic group over $\mathbb{Q}$, almost simple modulo its center, with $G(\mathbb{R})$ semisimple and noncompact modulo a compact center. Let $\Gamma\subseteq G(\Q)$ be a neat congruence subgroup. Assume that the connected locally symmetric space $X_\Gamma=\Gamma\backslash G(\R)/K$ is Hermitian. For $\Gamma'\subseteq \Gamma$, the covering map $X_{\Gamma'}\to X_{\Gamma}$ induces via pullback maps on cohomology
\begin{equation*}
    H_{(c)}^*(X_{\Gamma})\to H_{(c)}^*(X_{\Gamma'}),\ \ \ IH^*(\X_{\Gamma})\to IH^*(\X_{\Gamma'}).
\end{equation*}
These maps are all injective. Taking colimits over all congruence subgroups, we can define
\begin{equation*}
    H^k_{(c)}(X):=\colim_\Gamma  H^k_{(c)}(X_\Gamma),\ \ \ IH^k(\X):=\colim_\Gamma IH^k(\X_{\Gamma}).
\end{equation*}

All of the above are natural smooth left $G(\Q)$-modules, in the sense that the stabilizer of an element is a congruence subgroup. The action can be explicitly defined as follows. For any $g\in G(\Q)$ and $\Gamma\subseteq G(\Q)$, there is an isomorphism
\begin{equation*}
\begin{aligned}
     g^{-1}: X_{g\Gamma g^{-1}}&\xrightarrow{\sim} X_\Gamma\\
     g\Gamma g^{-1}x&\mapsto \Gamma(g^{-1}x).
\end{aligned}
\end{equation*}
Then the action of $g$ on $H^k_{(c)}(X)$ and $IH^k(\X)$ is defined as the pullback $(g^{-1})^*$ followed by passing to the colimit. In particular, for a (normal) congruence subgroup $\Gamma'\subseteq \Gamma$ of finite index, with $\Gamma'\subseteq \Gamma\cap g^{-1}\Gamma g$, we can always regard $g^*\alpha$ as an element in $H^k_{(c)}(X_{\Gamma'})$ (resp. $IH^k(\X_{\Gamma'})$) for $\alpha\in H^k_{(c)}(X_\Gamma)$ (resp. $\alpha\in IH^k(\X_{\Gamma})$). Note that the $G(\Q)$-module $IH^*(\X)$ is semisimple by the Matsushima formula, see \cite[Proposition 3.6]{nair2023automorphic} or \cite[Proposition 1.5 (1)]{venkataramana2001cohomology}.

\begin{Lemma}\label{Hecke preserves cup product}
    Any $g\in G(\Q)$ acts as a ring homomorphism on $IH^*(\X)$.
\end{Lemma}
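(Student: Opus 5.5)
The plan is to reduce the claim to a statement at finite level and then invoke the ring-homomorphism property of pullbacks established in \Cref{pullback on IH}. Fix $g\in G(\Q)$ and a class $\alpha\in IH^*(\X)$. By the definition of the colimit, $\alpha$ is represented by some class in $IH^*(\X_\Gamma)$ for a neat congruence subgroup $\Gamma$. We may also realize the product of two classes $\alpha,\beta$ at a common level $\Gamma$, since any two congruence subgroups contain a common one.

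The first point to settle is that the cup product on the colimit is well defined. This requires the transition maps $IH^*(\X_\Gamma)\to IH^*(\X_{\Gamma'})$ to be ring homomorphisms. These maps are pullbacks along the finite coverings $X_{\Gamma'}\to X_\Gamma$, which are morphisms of Hermitian locally symmetric varieties, so \Cref{pullback on IH}(i.) applies. Equivalently, they descend from the pullbacks $H^*(\hX_\Gamma;\Q)\to H^*(\hX_{\Gamma'};\Q)$, which are ring maps by \Cref{Properties of EC}(iii.). Hence $IH^*(\X)$ carries a well-defined ring structure as a filtered colimit of rings.

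Next, the action of $g$ is the composite of the pullback $(g^{-1})^*\colon IH^*(\X_\Gamma)\to IH^*(\X_{g\Gamma g^{-1}})$ with passage to the colimit. Here $g^{-1}\colon X_{g\Gamma g^{-1}}\to X_\Gamma$ is an isomorphism of Hermitian locally symmetric varieties, so again \Cref{pullback on IH}(i.) shows that $(g^{-1})^*$ is a morphism of graded rings. Combining this with the fact that the colimit maps are ring maps gives
\[
g(\alpha\cup\beta)=g\alpha\cup g\beta
\]
at finite level, and therefore on $IH^*(\X)$.

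The only subtle point is that the pullback in \Cref{pullback on IH} was constructed via \Cref{Comparison to RBS}, namely as the map induced on $Gr^W_{k}$ of the reductive Borel--Serre cohomology. One therefore has to check that this construction is compatible with composition, so that the $G(\Q)$-action is genuinely an action, and with the isomorphism $IH^*(\X_\Gamma)\cong IH^*(\X_{g\Gamma g^{-1}})$ used in defining the action. Both reduce to the functoriality of $EC$ in \Cref{Properties of EC}(iii.) together with the canonicity of the identification in \Cref{Theorem: Lie-Hodge correspondence}, which is compatible with translations (statement (ii.)$_d$ of its proof).

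For an isomorphism $g^{-1}$, an alternative is simply to transport all of the structure along it. Indeed, $g^{-1}$ extends to an isomorphism of Baily--Borel and of reductive Borel--Serre compactifications, and under such an isomorphism the cup product, the weight filtration and the identification of \Cref{Comparison to RBS} are all preserved. I expect this compatibility bookkeeping to be the main, though mild, obstacle.
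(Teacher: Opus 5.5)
Your proposal is correct and is essentially the paper's argument. The paper observes that $g^{-1}$ extends to an isomorphism $\hX_{g\Gamma g^{-1}}\xrightarrow{\sim}\hX_\Gamma$, so $(g^{-1})^*$ preserves the cup product on $H^*(\hX)$ and hence, by \Cref{cup product on IH}, on $IH^*(\X)$; this is your ``transport of structure'' alternative, and your route through \Cref{pullback on IH}(i.) is the same descent from reductive Borel--Serre cohomology. Your remark that the colimit transition maps are ring maps is a point the paper leaves implicit.
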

\begin{proof}
    The morphism $g^{-1}$ extends uniquely to an isomorphism 
    $\hX_{g\Gamma g^{-1}}\xrightarrow{\sim}\hX_\Gamma$, and therefore $g$ acts on $H^*(\hX)$ by the pullback $(g^{-1})^*$, which preserves cup product. Then by \Cref{cup product on IH}, it also preserves the cup product on $IH^*(\X)$. 
\end{proof}

Let $X^c$ be the compact dual symmetric space of $G(\R)/K$, defined as follows. Let $\g_0$ be the real Lie algebra of $G(\R)$, and the Cartan decomposition
\begin{equation*}
    \g_0=\mathfrak{k}_0\oplus \mathfrak{p}_0\subseteq \g.
\end{equation*}
Then the compact form
\begin{equation*}
    \mathfrak{u}_0:=\mathfrak{k}_0\oplus i\mathfrak{p}_0\subseteq \g
\end{equation*}
induces a compact group $U$ containing $K$. Then $X^c:=U/K$. It is known that there is an embedding by the Matsushima formula
\begin{equation*}
    H^k(X^c)\hookrightarrow IH^k(\X),
\end{equation*}
identifying $H^k(X^c)$ with $G(\Q)$-invariants of $IH^k(\X)$, see \cite[Proposition 3.6]{nair2023automorphic} or \cite[Proposition 1.5 (3)]{venkataramana2001cohomology}. We note that the fundamental class $[X^c]\in H^{2n}(X^c)$ defines a canonical generator of $IH^{2n}(\X_\Gamma)$ that is compatible when $\Gamma$ varies. Therefore, the top-degree intersection cohomology is canonically isomorphic to the trivial $G(\Q)$-module
\begin{equation*}
    H^{2n}(X^c)\cong IH^{2n}(\X)\cong \C.
\end{equation*}

Now we define the restriction map. Let $H\subseteq G$ be a connected reductive subgroup over $\mathbb{Q}$. We choose some maximal compact subgroup $K\subseteq G(\mathbb{R})$, such that the group $H(\mathbb{R})\cap K$ is a maximal compact subgroup of $H(\mathbb{R})$. Assume that $H(\R)/K_H$ is also Hermitian, and the map
\begin{equation*}
    \iota:X_{H,\Gamma_H}\hookrightarrow X_{\Gamma}
\end{equation*}
 is a closed immersion of quasi-projective varieties. Here $\Gamma_H:=\Gamma\cap H(\Q)$.
 
By \Cref{pullback on IH}, there are canonical maps 
\begin{equation*}
    \iota^*:H^*_{(c)}(X_{\Gamma})\rightarrow H^*_{(c)}(X_{H,\Gamma_H}), \ \ IH^*(\X_{\Gamma})\rightarrow IH^*(\X_{H,\Gamma_H}),
\end{equation*}
which are compatible with the natural maps $H^*_c\to IH^*\to H^*$. The restriction map is then defined as 
\begin{equation*}
    Res:=\prod_{g\in G(\Q)}\iota^*g^*: H^k(X)\to \prod_{g\in G(\Q)} H^k(X_H),
\end{equation*}
and similar for $H^k_c$ and $IH^k$. There are equivalent definitions in \cite{venkataramana2001cohomology} using the adelic language and in \cite{nair2023automorphic} using the induced modules.

 Let $n$ (resp. $m$) denote the (complex) dimension of $X_\Gamma$ (resp. $X_{H,\Gamma_H}$), and $r=n-m$. We denote by
 \begin{equation*}
     \xi_\Gamma:=\Icl(X_{H,\Gamma_H})=\iota_*([X_{H,\Gamma_H}])\in IH^{2r}(\X_\Gamma)\subseteq IH^{2r}(\X)
 \end{equation*}
 the modular intersection class (\Cref{modular intersection class}), and define
 \begin{equation*}
     \xi^c:= \iota^c_*([X^c_H])\in H^{2r}(X^c)\subseteq IH^{2r}(\X).
 \end{equation*}

The following results extend those of \cite{venkataramana2001cohomology} as well as \cite{nair2023automorphic}. The proofs are verbatim, and we sketch them here for convenience. 

\begin{Lemma}\label{Venkataramana 1}
    Let $V_\Gamma$ be the $G(\Q)$-submodule of $IH^{2r}(\X)$ generated by $\xi_\Gamma$. Then the space of $G(\Q)$-invariants in $V_{\Gamma}$ is generated by $\xi^c$.
\end{Lemma}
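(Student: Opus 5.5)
The plan follows Venkataramana's argument, with $H^*$ replaced by $IH^*$. The Matsushima formula makes $IH^{2r}(\X)$ a semisimple smooth $G(\Q)$-module. Hence its $G(\Q)$-invariants form a direct summand, with a $G(\Q)$-equivariant projection $P: IH^{2r}(\X)\to IH^{2r}(\X)^{G(\Q)}\cong H^{2r}(X^c)$. This projection is the isotypic projection onto the trivial representation. Its restriction to the submodule $V_\Gamma$ is the projection of $V_\Gamma$ onto $V_\Gamma^{G(\Q)}$, because $V_\Gamma$, being a submodule of a semisimple module, is a direct sum of its isotypic components. Since $V_\Gamma$ is spanned by the translates $g^*\xi_\Gamma$ and $P(g^*\xi_\Gamma)=P(\xi_\Gamma)$, we get $V_\Gamma^{G(\Q)}=P(V_\Gamma)=\C\cdot P(\xi_\Gamma)$. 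It therefore suffices to show that $P(\xi_\Gamma)$ is a nonzero multiple of $\xi^c$.

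To identify $P(\xi_\Gamma)$, I would use the Poincaré duality pairing from \Cref{cup product on IH}, together with the fact from \Cref{Hecke preserves cup product} that each $g$ acts by ring automorphisms. The action of $g$ also fixes the canonical generator of $IH^{2n}(\X)$, so the pairing is $G(\Q)$-invariant. Because the trivial isotypic component is orthogonal to all nontrivial isotypic components under an invariant pairing, $P(\xi_\Gamma)$ is characterized inside $H^{2r}(X^c)$ by
\begin{equation*}
\deg_X(P(\xi_\Gamma)\cup \beta)=\deg_X(\xi_\Gamma\cup\beta), \qquad \beta\in H^{2m}(X^c)\subseteq IH^{2m}(\X).
\end{equation*}
This needs the restricted pairing $H^{2r}(X^c)\times H^{2m}(X^c)\to\C$ to be perfect, which is Poincaré duality on $X^c$ transported through the Matsushima embedding. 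That embedding is a ring map sending $[X^c]$ to the canonical generator, as recalled before this lemma. By \Cref{properties of Intersection cycle}, the right side equals $\deg_{X_{H,\Gamma_H}}(\iota^*\beta)$.

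The main step is to compare the pullback of an invariant class with the compact dual side. The classes $\beta\in H^{2m}(X^c)$ are represented by $G(\R)$-invariant forms on $D$, which descend to bounded closed forms on $X_\Gamma$. By \Cref{geometric volume=degree}, $\deg_{X_{H,\Gamma_H}}(\iota^*\beta)$ is then the integral of an $H(\R)$-invariant top form over $X_{H,\Gamma_H}$, which equals $\mathrm{vol}(X_{H,\Gamma_H})\cdot c(\beta)$. Here $c(\beta)$ is the value at the base point, and it is proportional, with a universal nonzero constant coming from the Hirzebruch proportionality principle, to $\deg_{X_H^c}((\iota^c)^*\beta)=\deg_{X^c}(\xi^c\cup\beta)$. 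Hence the linear functional $\beta\mapsto \deg_X(\xi_\Gamma\cup\beta)$ on $H^{2m}(X^c)$ is a nonzero constant multiple of $\beta\mapsto\deg_X(\xi^c\cup\beta)$, the constant being a ratio of volumes. Perfectness of the pairing on $H^*(X^c)$ then gives $P(\xi_\Gamma)=c\,\xi^c$ with $c\neq 0$. I expect the main obstacle to be making this proportionality rigorous, namely verifying that the normalizations of the $G(\R)$-invariant forms versus the classes on $X^c$ match compatibly for $G$ and $H$. I would handle it exactly as Venkataramana does in the compact case, since the integrals involved are all of invariant forms and \Cref{geometric volume=degree} removes the compactness issue.
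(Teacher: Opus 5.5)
Your proof is correct. Its skeleton is the paper's: semisimplicity and cyclicity give $V_\Gamma^{G(\Q)}=\C\cdot P(\xi_\Gamma)$; invariant classes in $H^{2m}(X^c)$ pair trivially with the non-trivial isotypic part, because the cup product lands in the trivial module $IH^{2n}(\X)$; and Poincaré duality on $X^c$ finishes.

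Where you differ is the identification step. The paper asserts a commutative square between $\iota_*\colon IH^0(\X_H)\to IH^{2r}(\X)$ and $\iota^c_*\colon H^0(X^c_H)\to H^{2r}(X^c)$ ``by Poincaré duality''. From it, the paper deduces the exact identity $\alpha\cup\xi_\Gamma=\alpha\cup\xi^c$, hence $\xi^0_\Gamma=\xi^c$. You instead compute $\deg_X(\xi_\Gamma\cup\beta)=\int_{X_{H,\Gamma_H}}\iota^*\beta$ via Proposition~\ref{geometric volume=degree}, and compare with $X^c_H$ by Hirzebruch proportionality. This gives $P(\xi_\Gamma)=c\,\xi^c$ with $c$ a ratio of volumes up to sign.

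Your version is the more accurate one. The Matsushima embeddings for $G$ and for $H$ rescale the degree maps by different volume factors, so the formal adjunction only yields the identity up to that constant. For example, take a CM point on a compact Shimura curve $X_\Gamma$ of genus $g$. Then $IH^2(\X_\Gamma)$ is one-dimensional and invariant, and $\xi_\Gamma$ has degree $1$. By Gauss--Bonnet, the image of the point class of $X^c=\P^1$ has degree $\pm(g-1)$. Hence $\xi^0_\Gamma=\xi_\Gamma=\pm(g-1)^{-1}\xi^c$, which is not $\xi^c$ once $g>2$.

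Since the lemma only concerns the span, this discrepancy does not affect the statement. But it is exactly your proportionality computation that establishes the nonvanishing of the constant, which the paper's shortcut takes for granted.

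The cost of your route is the analytic input: bounded invariant representatives, the volume--degree identity, and using the duality $\mathfrak{p}\leftrightarrow i\mathfrak{p}$ compatibly for $G$ and $H$. The normalization issue you flag as the main obstacle is not actually a problem, because only $c\neq 0$ matters. Any consistent choice of dual metrics gives $c=\pm\frac{\mathrm{vol}(X_{H,\Gamma_H})\,\mathrm{vol}(X^c)}{\mathrm{vol}(X_\Gamma)\,\mathrm{vol}(X^c_H)}\neq 0$.
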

This corresponds to \cite[Theorem 1]{venkataramana2001cohomology} and \cite[Proposition 3.10]{nair2023automorphic}.
\begin{proof}
    The space $V^0:=V_{\Gamma}^{G(\Q)}\subseteq H^{2r}(X^c)$ is at most one-dimensional, since $V_\Gamma$ is cyclic. By semisimplicity, $V=V^0\oplus V^1$ where $V^1$ is a $G(\Q)$-submodule complement to $V^0$. We write 
    \begin{equation*}
        \xi_\Gamma=\xi_\Gamma^0+\xi_{\Gamma}^1, \ \ \xi^0_{\Gamma}\in V^0, \ \ \xi^1_\Gamma\in V^1.
    \end{equation*}
 We claim that $\xi^0_\Gamma=\xi^c$, which is sufficient for the proof since $V^0$ is at most one-dimensional and it implies $0\neq \xi^c\in V^0$.
    
As $G(\Q)$ acts on $IH^*(\X)$ as ring homomorphisms by \Cref{Hecke preserves cup product},  and $IH^{2n}(\X)\cong H^{2n}(X^c)$ is a one-dimensional trivial $G(\Q)$-module, for any $\alpha\in H^{2m}(X^c)$, we have $\alpha\cup V^1_\Gamma=0$. Indeed, for any $u\in V^1_\Gamma$, and any $g\in G(\Q)$, we have
\begin{equation*}
    \alpha\cup u =g(\alpha\cup u)= g(\alpha)\cup g(u)=\alpha\cup g(u).
\end{equation*}
This implies that 
\begin{equation*}
    \alpha\cup-: V^1_\Gamma\to IH^{2n}(\X)\cong \C
\end{equation*}
is $G(\Q)$-equivariant. But $V^1_\Gamma$ has no $G(\Q)$-invariants by definition,  hence no $G(\Q)$-coinvariants by semisimplicity. It follows that
\begin{equation*}
    \alpha\cup \xi_\Gamma = \alpha\cup \xi^0_\Gamma
\end{equation*}
for any $\alpha\in H^{2m}(X^c)$. On the other hand, since  
\begin{equation*}
    \begin{tikzcd}
H^0(X^c_H) \arrow[r, "\sim"] \arrow[d, "\iota^c_*"'] & IH^0(\X_H) \arrow[d, "\iota_*"] \\
H^{2r}(X^c) \arrow[r, hook]                        & IH^{2r}(\X)                  
\end{tikzcd}
\end{equation*}
by Poincaré duality, we have
\begin{equation*}
    \alpha\cup \xi_\Gamma = \alpha\cup \xi^c,\ \ \ \alpha\in H^{2m}(X^c).
\end{equation*}
It follows that $\xi^0_\Gamma=\xi^c\in H^{2r}(X^c)$ by Poincaré duality.

\end{proof}

\begin{Theorem}\label{Venkataramana's Criterion}
     If $\alpha\in IH^k(\X)$ and $Res(\alpha)=0$, then $\alpha\cup \xi^c=0$.
\end{Theorem}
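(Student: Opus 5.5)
The approach follows Venkataramana's argument for compact quotients, with the intersection cohomology operations of \Cref{cup product on IH}, \Cref{pullback on IH}, \Cref{Hecke preserves cup product} and \Cref{Venkataramana 1} in place of ordinary cohomology. Fix $\alpha\in IH^k(\X)$ with $Res(\alpha)=0$, so that $\iota^*g^*\alpha=0$ for all $g\in G(\Q)$. Choose $\Gamma$ small enough that $\alpha$ and all the translates we use live on $\X_\Gamma$. The first goal is
\begin{equation*}
    g^*\alpha\cup \xi_\Gamma=0\quad \text{in } IH^{k+2r}(\X)\ \text{ for all } g\in G(\Q).
\end{equation*}
By \Cref{properties of Intersection cycle} and the projection formula of \Cref{pullback on IH}(ii) with the class $[X_{H}]$, we have
\begin{equation*}
    g^*\alpha\cup\xi_\Gamma=\iota_*([X_H]\cup\iota^*g^*\alpha)=\iota_*\iota^*g^*\alpha=0.
\end{equation*}
One has to check compatibility with passing to smaller levels $\Gamma'$. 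This should be routine: the pullbacks along coverings on $IH^*$ are injective ring maps compatible with $\iota^*$, as the covers of $\hX$ are.

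Next, apply $g^{-1}$ and use \Cref{Hecke preserves cup product}. Then $\alpha\cup g^{-1}\xi_\Gamma=0$ for all $g$. Hence $\alpha\cup v=0$ for every $v$ in the $G(\Q)$-module $V_\Gamma$ generated by $\xi_\Gamma$, since $\alpha\cup-$ is linear.

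By \Cref{Venkataramana 1}, $\xi^c$ spans $V_\Gamma^{G(\Q)}$, so in particular $\xi^c\in V_\Gamma$. Therefore $\alpha\cup\xi^c=0$. Concretely, $\xi^c=\xi^0_\Gamma$ is the projection of $\xi_\Gamma$ onto the invariants, which is a finite linear combination of translates of $\xi_\Gamma$ because $V_\Gamma$ is a semisimple cyclic module.

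The main obstacle is bookkeeping with levels. The colimit $IH^*(\X)$ must carry a well-defined cup product compatible with the transition maps, and the identity $\iota^*g^*\alpha=0$ has to hold at a common level $\Gamma'$ where the projection formula is applied. At that level the modular intersection class of $X_{H,\Gamma'_H}$ must map to $\xi_\Gamma$ under the transition maps, possibly up to a degree factor or a sum over Hecke translates of components. I would handle this by using a normal $\Gamma'\subseteq\Gamma$ and writing the pullback of $\xi_\Gamma$ as a sum of classes $\delta^*\xi_{\Gamma'}$ over coset representatives $\delta$. This keeps everything inside $V_\Gamma$, and the argument above applies unchanged.
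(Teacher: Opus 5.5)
Your proposal is correct and follows essentially the same route as the paper. Like the paper, you pass to a normal $\Gamma'\subseteq\Gamma\cap g^{-1}\Gamma g$ and apply the projection formula there to each $\gamma^*g^*\alpha$, using $Res(\alpha)=0$ at all $g\gamma$. You then reassemble $\xi_\Gamma$, up to a nonzero scalar, from the translates $\gamma^*\xi_{\Gamma'}$ (Venkataramana's Lemma 4.2), and conclude via $\xi^c\in V_\Gamma$ from \Cref{Venkataramana 1}. One correction: your opening choice of a single $\Gamma$ on which all translates $g^*\alpha$ live is impossible as stated. The level has to depend on $g$, which is exactly how your final paragraph, and the paper, handle it.
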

This corresponds to \cite[Theorem 2]{venkataramana2001cohomology} and \cite[Theorem 3.11]{nair2023automorphic}.
\begin{proof}
    Let $\alpha\in IH^k(\X_{\Gamma})$ for some congruence subgroup $\Gamma\subseteq G(\Q)$, and $g\in G(\Q)$. Then we can find a normal subgroup $\Gamma'$ of finite index in $\Gamma$, with $\Gamma'\subseteq \Gamma\cap g^{-1}\Gamma g$. Let $\iota':X_{H,\Gamma_H'}\hookrightarrow X_{\Gamma'}$ be the embedding. Then for any $\gamma\in \Gamma/\Gamma'$, 
    \begin{equation*}
        (\iota')^*(g\gamma)^*\alpha=(\iota')^*\gamma^*g^*\alpha=0\in IH^{k}(\X_{H,\Gamma'})
    \end{equation*}
      It follows by the projection formula (\Cref{pullback on IH}) that
    \begin{equation*}
            0=\iota'_*(\iota')^*\gamma^*g^*\alpha
            =\gamma^*g^*\alpha\cup \xi_{\Gamma'}
            =g^*\alpha\cup (\gamma^{-1})^*\xi_{\Gamma'}.  
    \end{equation*}
By \cite[Lemma 4.2]{venkataramana2001cohomology}, we have
    \begin{equation*}
        \xi_\Gamma=|\Gamma/\Gamma'|\sum_{\gamma\in \Gamma/\Gamma'} \gamma^* \xi_{\Gamma'}\in IH^{2(n-m)}(\X).
    \end{equation*}
It follows that
\begin{equation*}
   0= \sum_{\gamma\in \Gamma/\Gamma'}g^*\alpha\cup \gamma^*\xi_{\Gamma'}= |\Gamma/\Gamma'|^{-1}g^*\alpha\cup \xi_{\Gamma},
\end{equation*}
    and therefore
    \begin{equation*}
        g^*\alpha\cup \xi_{\Gamma}=\alpha\cup (g^{-1})^*\xi_{\Gamma}=0,
    \end{equation*}
for any $g\in G(\Q)$. Now summing over $G(\Q)/\Gamma$, one concludes that
\begin{equation*}
    \alpha\cup \xi^c=0,
\end{equation*}
since $\xi^c$ is contained in the $G(\Q)$-module generated by $\xi_\Gamma$, by \Cref{Venkataramana 1}.
\end{proof}
This implies all the consequences \cite[Theorem 3-8]{venkataramana2001cohomology} in the noncompact cases. We will not list all of them, but only record the necessary results relevant to the questions in \cite{nair2023automorphic} in the unitary case.
\begin{Corollary}\label{AutoLef}
    If $n=m+1$, then
    \begin{equation*}
        Res:IH^k(\X)\to \prod_{g\in G(\Q)} IH^k(\X_H)
    \end{equation*}
    is injective for $k\leq m$.
\end{Corollary}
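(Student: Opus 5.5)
We will deduce the injectivity from \Cref{Venkataramana's Criterion} combined with hard Lefschetz and Poincaré duality on $IH^*(\X)$, exactly as in \cite[Theorem 3]{venkataramana2001cohomology}. Let $\alpha\in IH^k(\X_\Gamma)$ with $k\leq m$ and $Res(\alpha)=0$. By \Cref{Venkataramana's Criterion}, $\alpha\cup\xi^c=0$ in $IH^{k+2}(\X)$, since here $r=n-m=1$.

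The key input is that $\xi^c\in H^2(X^c)$ is a nonzero multiple of the Kähler class. Because $G$ is almost simple modulo its center, $H^2(X^c)$ is at most one-dimensional, spanned by the class of the invariant Kähler form. By the proof of \Cref{Venkataramana 1}, $\xi^c$ is nonzero, since its pairing with $[X^c_H]$-dual classes in $H^{2m}(X^c)$ is nonzero. Hence $\xi^c=c\cdot\omega$ with $c\neq 0$, where $\omega$ is the invariant Kähler class. Under the Matsushima embedding $H^*(X^c)\hookrightarrow IH^*(\X_\Gamma)$, $\omega$ maps to the first Chern class of the ample automorphic line bundle on $\X_\Gamma$. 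More precisely, it maps to its image in intersection cohomology via $H^2(\X_\Gamma)\to IH^2(\X_\Gamma)$, which is compatible with cup products by \Cref{cup product on IH}.

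It then remains to apply hard Lefschetz for intersection cohomology of the projective variety $\X_\Gamma$. The cup product with an ample class $L$ gives an isomorphism $L^{n-k}:IH^k\to IH^{2n-k}$ for $k\leq n$, so $L\cup-:IH^k\to IH^{k+2}$ is injective for $k\leq n-1=m$. The step we expect to be the main obstacle is showing that the Lefschetz operator in the sense of perverse sheaves (action of $c_1(L)$ on $IC$) agrees with the cup product defined through $H^*(\hX)$ in \Cref{cup product on IH}. To handle this, we will use that $c_1(L)$ comes from $H^2(\X_\Gamma)$, and that the $H^*(\X)$-module structure on $IH^*$ is compatible with the map $H^*(\X)\to H^*(\hX)$ and the surjection $H^*(\hX)\to IH^*$. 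The latter is a morphism of $\Q_{\X}$-modules since $\pi_*\Q_{\hX}\to IC$ is $\Q_{\X}$-linear.

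Therefore $\alpha\cup\xi^c=0$ forces $\alpha=0$ at level $\Gamma$. Since the transition maps in the colimit are injective and compatible with everything above, we conclude that $Res$ is injective on $IH^k(\X)$ for $k\leq m$.
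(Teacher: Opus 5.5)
Your overall route is the paper's: \Cref{Venkataramana's Criterion} followed by hard Lefschetz for $\cup\xi^c$. Your compatibility argument is also correct. Cup product in the paper's sense with the image of some $\beta\in H^2(\X_\Gamma)$ agrees with the $H^*(\X_\Gamma)$-module action on $IH^*$, because $\pi_*\Q_{\hX}\to IC_{\X}$ is a morphism of sheaves.

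\textbf{The gap.} You assert that $H^2(X^c)$ is at most one-dimensional because $G$ is almost simple modulo its center. The hypothesis is only $\Q$-almost simplicity, and $G(\R)$ may have several non-compact simple factors. For example, take $G=\mathrm{Res}_{F/\Q}SL_2$ with $F$ real quadratic and $H=SL_2$ embedded diagonally, so $n=m+1$. Then $X^c=\P^1\times\P^1$ and $H^2(X^c)\cong\Q^2$. In general $\dim H^2(X^c)$ is the number of irreducible factors of $X^c$.

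In that situation, $\xi^c\neq 0$ does not make $\cup\xi^c$ a Lefschetz operator. A fibre class $f$ of $\P^1\times\P^1$ is nonzero but $f^2=0$. Your argument also no longer shows that $\xi^c$ is proportional to the Chern class of the ample line bundle on $\X_\Gamma$. So the identification $\xi^c=c\cdot c_1(L)$, on which your appeal to algebraic hard Lefschetz rests, is unsupported in the generality of the statement.

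\textbf{What is needed to close it.} First, show that $\xi^c$ lies in the Kähler cone of $X^c$, i.e.\ has positive component on each irreducible factor $X^c_i$.
\begin{itemize}
\item The components are $\geq 0$ because $X^c_H$ is an effective divisor on a homogeneous variety.
\item If the $i$-th component vanished, then since $\mathrm{Pic}(X^c_i)\cong\Z$, $X^c_H$ would have the form $X^c_i\times D'$.
\item That forces $\mathfrak{h}\supseteq\mathfrak{p}_i+[\mathfrak{p}_i,\mathfrak{p}_i]=\mathfrak{g}_i$.
\item Galois conjugation of the simple factors of $G_{\overline{\Q}}$ would then give $H=G$, a contradiction.
\end{itemize}

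Second, you need hard Lefschetz on $IH^*(\X_\Gamma)$ for such an invariant Kähler class, which need not come from a line bundle on $\X_\Gamma$. This is what the paper's phrase ``canonical Lefschetz operator on $IH^*(\X)$ (or $H^*_{(2)}(X)$)'' supplies. It comes from the Kähler identities on $L^2$-harmonic forms for an invariant Kähler metric whose Kähler form represents $\xi^c$ (quasi-isometric to the original metric), combined with Zucker's conjecture. It also uses compatibility of the paper's cup product with wedge product of bounded forms.

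\textbf{Where your argument does work.} As written, it is complete only when $G(\R)$ has a single non-compact factor, which covers the unitary application in \Cref{Auto Lef Remark}. Even there, your claim that the Matsushima image of $\omega$ equals the image of $c_1(L)$ in $IH^2(\X_\Gamma)$ needs a justification. One option is Chern--Weil on $X_\Gamma$ together with injectivity of $IH^2(\X_\Gamma)\to H^2(X_\Gamma)$, which holds when all boundary strata have codimension $\geq 2$. The case $n=1$ can be handled directly, since then $IH^2\cong\Q$.
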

This corresponds to \cite[Theorem 3]{venkataramana2001cohomology}.
\begin{proof}
    This follows by \Cref{Venkataramana's Criterion} and hard Lefschetz, since $\cup\xi^c$ defines a canonical Lefschetz operator on $IH^*(\X)$ (or $H^*_{(2)}(X)$) in this case. See \cite[\S 5]{venkataramana2001cohomology} for more detail. 
\end{proof}

The following corollary extends the result in \cite{nair2023automorphic} in the unitary case. In particular, we answer the question in \cite[\S 1.4]{nair2023automorphic} for the unitary Shimura varieties in the middle degree. Let $G(\R)^{nc}$ denote the product of the noncompact factors of $G(\R)$.

\begin{Corollary}\label{Auto Lef Remark}
If $H(\R)^{nc}\subseteq G(\R)^{nc}$ is $SU(1,m)\subseteq SU(1,n)$, then
       \begin{equation*}
           Res:H^k(X)\to \prod_{g\in G(\Q)} H^k(X_H)
       \end{equation*}
is injective for $k\leq m$. When $k=m$, the image is necessarily contained in the product of $H^m_!(X_H)$.
\end{Corollary}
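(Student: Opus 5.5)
The plan is to deduce the statement from \Cref{AutoLef}, which applies since $n=m+1$ for $SU(1,m)\subseteq SU(1,n)$ up to the correct codimension. We need $n=m+1$; in general $n-m$ may exceed one, but we may factor $SU(1,m)\subseteq SU(1,m+1)\subseteq\cdots\subseteq SU(1,n)$ through intermediate unitary subgroups. This gives a chain of codimension-one modular subvarieties, and injectivity of $Res$ composes along the chain.

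The composition step needs care: if each step is injective into $\prod_g$, the composite is injective, because $Res$ for the intermediate group applied factorwise, together with translations by $G(\Q)$, again yields restrictions of the form $\iota^*g^*$.

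The passage from $IH^k$ to $H^k$ goes through the weight filtration. For $k\leq m$ with $m<n$, the ball-quotient boundary of $\X$ consists of points, and the known computation of $H^k(X)$ for $U(1,n)$ shows that $H^k(X)\cong IH^k(\X)$ for $k<n$. Indeed $H^k(\hX)\to H^k(X)$, and in the $\Q$-rank one case the link of a cusp is a nilmanifold whose cohomology in degree $k<n$ is concentrated in weights matching the truncation. Concretely, I would show that $H^k(X)=IH^k(\X)$ for $k<n$ and that $W_kH^k(X)=IH^k(\X)\to H^k(X)$ is injective in that range.

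Injectivity on $H^k(X)$ then follows from \Cref{AutoLef} in degrees $k\le m$. The only subtlety is that the target is $H^k(X_H)$ rather than $IH^k(\X_H)$, which we handle via the compatibility in \Cref{pullback on IH}.

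For $k=m$ the image lies in $H^m_!(X_H)$, which is the main obstacle. The image of $H^m(X)$ equals the image of $IH^m(\X)$, which lands in $\iota^*IH^m(\X_H)$. The image of $IH^m(\X_H)$ in $H^m(X_H)$ is $W_mH^m(X_H)$, and for the middle degree $m=\dim X_H$ this equals $H^m_!(X_H)$. The reason is that the Eisenstein part of $H^m(X_H)$ for $SU(1,m)$ has weight $>m$, while $IH^m(\X_H)\to H^m(X_H)$ factors through $H^m_c$, using isolated cusps: $IC$ is truncated at degree $<m$ at the cusp points, so $H^m_c(X_H)\to IH^m(\X_H)$ is surjective. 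The key check is this surjectivity, namely that $IH^m(\X_H)$ is the image of $H^m_c$. This holds because the cusp strata are points and the stalk of $IC$ vanishes in degrees $\geq m$.
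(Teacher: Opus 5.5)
Your route is the paper's: reduce along a chain of codimension-one unitary subgroups, apply \Cref{AutoLef} on intersection cohomology, and then use that the Baily--Borel boundaries are isolated points. The paper simply cites \cite[Proposition 6.7.3]{maxim2019intersection} for $IH^k(\X)\cong H^k(X)$ ($k<n$) and $IH^m(\X_H)\cong H^m_!(X_H)$. Your weight-theoretic justification in the third paragraph (nilmanifold links, weights of the Eisenstein part) is unnecessary. Both facts follow topologically from the stalk condition on $IC$ at a point stratum.

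\textbf{Gap.} One step does not close as written. Injectivity of $IH^k(\X)\to\prod_g IH^k(\X_H)$ together with the commutative square of \Cref{pullback on IH} does not yet give injectivity of $H^k(X)\to\prod_g H^k(X_H)$. For that you need the natural map $IH^k(\X_H)\to H^k(X_H)$ to be injective for every $k\le m$, and compatibility alone does not supply it.
\begin{itemize}
\item For $k<m$ you would get it by applying your claim $IH^k=H^k$ to $X_H$, but you only state that claim for $X$.
\item For $k=m$ you only identify the image of $IH^m(\X_H)$, via surjectivity of $H^m_c(X_H)\to IH^m(\X_H)$. That is what the second assertion needs, not the first.
\end{itemize}

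\textbf{Fix.} The dual use of your own stalk-vanishing observation repairs this. The cone $C$ of $IC_{\X_H}\to Rj_*\Q_{X_H}$ is supported on the cusps and is $\tau_{\ge m}$ of the stalk of $Rj_*\Q$ there. Hence $H^i(C)=0$ for $i<m$. In the exact sequence $H^{k-1}(C)\to IH^k(\X_H)\to H^k(X_H)\to H^k(C)$, the middle map is therefore an isomorphism for $k<m$ and injective for $k=m$; that is, $IH^m(\X_H)\cong H^m_!(X_H)$. With this added, the argument is complete and coincides with the paper's.
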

This is known in \cite{nair2023automorphic} when $k<m$ and $H\subseteq G$ of the same $\Q$-rank.
\begin{proof}
By induction, one can reduce to the case $m=n-1$. Then \Cref{AutoLef} applies, and we get
\begin{equation*}
    Res: IH^k(\X)\to \prod_{g\in G(\Q)} IH^k(\X_H)
\end{equation*}
is injective for $k\leq n-1$. We note that the boundary of $\X$ consists of isolated points, thus
\begin{equation*}
    IH^k(\X)\cong H^k(X)
\end{equation*}
    for $k\leq m=n-1$, and $IH^m(\X_H)\cong H^m_{!}(X_H)$, see \cite[Proposition 6.7.3]{maxim2019intersection}. This gives the injection for $k\leq m$.
\end{proof}

\subsection{A Cohomological Siegel--Weil Formula}\label{subsection: Coh Siegel-Weil}

The aim of this final section is to give a cohomological interpretation of Kudla's analytic Siegel--Weil formula, recalled in \Cref{generating series of volumes}. In the compact case, such an interpretation was established in \cite[(4.4)]{kudla49special}, via Kudla--Millson theory.

We summarize the constructions, following \cite[\S 1-4]{kudla49special}. Let $G=\mathrm{GSpin}(V)$, where $(V,(\ ,\ ))$ is an inner product space over $\Q$ of signature $(n,2)$ with Witt index $\mathrm{Witt}(V)$.\footnote{i.e., the dimension of a maximal isotropic subspace.} Let 
\begin{equation*}
    D=\{w\in V(\C)|(w,w)=0, (w,\overline{w})<0\}/\C^{\times}\subseteq \P(V(\C)).
\end{equation*}
This determines a Shimura datum $X=Sh(G,D)$, and for $K\subseteq G(\A^{\infty})$ a compact open subgroup, one has a complex quasi-projective variety of dimension $n$
\begin{equation*}
    X_K(\C)=G(\Q)\backslash (D\times G(\A^\infty)/K),
\end{equation*}
where $\A^{\infty}$ is the ring of finite adèles.

Given an $r$-tuple of vectors $x=(x_1,\ldots,x_r)\in V(\Q)^r$,  assume that the matrix
\begin{equation*}
    Q(x):=\frac{1}{2}((x_i,x_j))
\end{equation*}
is positive semidefinite of rank $r(x)$, then it determines a sub-Shimura variety $Sh(G_x,D_x)\subseteq Sh(G,D)$ of codimension $r(x)\leq r$, where 
\begin{equation*}
    \begin{aligned}
        G_x&:=\mathrm{stab}_G(x)\\
        D_x&:=\{w\in D|(x,w)=0\}/\C^{\times}\subseteq D.
    \end{aligned}
\end{equation*}
For any $g\in G(\A^\infty)$ and $y=gx\in V(\A^\infty)^r$, define the Hecke-translated algebraic cycle
\begin{equation*}
\begin{aligned}
    Z(y;K):G_x(\Q)\backslash(D_x\times G_x(\A^\infty)/K^g_x)&\to G(\Q)\backslash (D\times G(\A^\infty )/K)=X_K(\C),\\
    G_x(\Q)(z,h)K^g_x&\mapsto G(\Q)(z, hg)K,
\end{aligned}  
\end{equation*}
where $K^g_x:=G_x(\A^\infty)\cap gKg^{-1}$. 

Finally, for a $K$-invariant Schwartz function $\varphi\in S(V(\A^\infty)^r)^K$ and a symmetric positive semidefinite matrix $T\in Sym_r(\Q)_{\geq 0}$, define the weighted modular cycle
\begin{equation*}
Z(T,\varphi;K)
  := \sum_{\substack{y \in K\backslash V(\A^\infty)^r\\ Q(y)=T}} \varphi(y)Z(y;K)\in Z^{2r(T)}(X_K(\C)).
\end{equation*}
For $K'\subseteq K$ and let $p:M_{K'}\to M_K$ be the covering map, then 
\begin{equation*}
    p^*Z(T,\varphi;K)=Z(T,\varphi;K').
\end{equation*}

Let $\L$ be the line bundle on $X_K(\C)$, the descent of the tautological bundle $\O(-1)$ on $\P(V(\C))$. Let $\ell:=c_1(\L^{\vee})$ and $\Omega$ be the Chern form of $\L^{\vee}$. Then $-\Omega$ is a canonical invariant Kähler form on $X_K$\footnote{The sign comes from the convention on the orientation of $X_K$.} (\cite[Corollary 4.12]{kudla2003integrals}). Note also that the line bundle $\L^{\vee}$ extends canonically to an ample line bundle on $\X_K$.

 Let 
\begin{equation*}
    \mathbb{H}^r:=\{\tau=u+iv|u,v\in Sym_r(\R), v>0\}
\end{equation*}
be the Siegel upper half plane of degree $r$. For each $r$, there exists a canonical closed $2r$-form $\theta_r(\tau,\varphi)$ on $X_K(\C)$ (\cite[(3.4)]{kudla49special}, see \cite[\S 4]{kudla1988tubes} for more detail). For a modular subvariety $\iota:Z\hookrightarrow X_K$ of codimension $m$ and any smooth bounded closed $2m$-form $\alpha$ on $X_K$, we define
    \begin{equation*}
        \mathrm{vol}(Z,\alpha):=\int_Z\iota^*\alpha,
    \end{equation*}
with respect to the induced metric on $Z$, and this definition extends linearly to modular cycles of $X$. We now recall Kudla's Siegel--Weil formula \cite[Theorem 4.1]{kudla49special}.

\begin{Theorem}\label{generating series of volumes}
    When Weil's condition $r<n+1-\mathrm{Witt}(V)$ holds, the theta integral
    \begin{equation*}
        I_r(\tau,\varphi):=\int_{X_K}\theta_{r}(\tau,\varphi)\wedge \Omega^{n-r}
    \end{equation*}
    converges absolutely, and there are identities
    \begin{equation*}
        \mathrm{vol}(X_K,\Omega^n)E_r(\tau,s_0,\varphi)= I_r(\tau,\varphi)=\sum_{T\geq 0}\mathrm{vol}(Z(T,\varphi;K),\Omega^{n-r(T)})q^T.
    \end{equation*}
    Here $q^T:=e(\mathrm{tr}(T\tau))$, and $E_r(\tau, s_0,\varphi)$ is a Siegel Eisenstein series associated to $\varphi$, see \cite[\S 4]{kudla49special}, and $s_0=(n+1-r)/2$.
\end{Theorem}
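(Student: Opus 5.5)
This theorem is Kudla's Siegel--Weil formula \cite[Theorem 4.1]{kudla49special}, which the paper quotes rather than reproves. The plan is therefore to reassemble Kudla's argument in three steps.

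\textbf{Step 1 (convergence).} I would begin with absolute convergence of the theta integral. The form $\theta_r(\tau,\varphi)$ is the theta series built from the Kudla--Millson Schwartz form $\varphi_{KM}\otimes\varphi$. Its growth near the boundary of the Baily--Borel compactification is controlled by the isotropic part of the lattice sum. Following Weil's argument, I would reduce to estimates on Siegel sets, in the spirit of \Cref{Lp-criterion of a function}: bound the theta function by $a^{-\lambda}$ along each $A_P$. The condition $r<n+1-\mathrm{Witt}(V)$ is precisely what makes $2\rho_P+\lambda>0$ for every rational parabolic $P$ of $G$, which gives integrability. The factor $\Omega^{n-r}$ is invariant, hence bounded, so it does not affect convergence.

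\textbf{Step 2 (first identity).} Next, $\theta_r(\tau,\varphi)\wedge\Omega^{n-r}$ is a top-degree form. Integrating it over $X_K$ amounts to integrating a function on $G(\Q)\backslash G(\A)$ against the theta kernel. Concretely, I would pair $\varphi_{KM}$ with $\Omega^{n-r}$ at the archimedean place. I would check that this produces, up to the constant $\mathrm{vol}(X_K,\Omega^n)$, a Schwartz function whose image under the Weil representation generates the weight $(n+2)/2$ holomorphic vector. The classical Siegel--Weil formula of Weil, extended beyond the absolutely convergent range by Kudla--Rallis, then identifies the theta integral with the Siegel Eisenstein series $E_r(\tau,s_0,\varphi)$ at $s_0=(n+1-r)/2$. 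This yields $\mathrm{vol}(X_K,\Omega^n)E_r=I_r$.

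\textbf{Step 3 (second identity).} For the Fourier expansion, I would unfold the theta series along $G(\Q)$-orbits of $x\in V(\Q)^r$ with $Q(x)=T$. For $T\geq 0$ of rank $r(T)$, each orbit contributes $\int_{X_K}\varphi_{KM}(x)\wedge\Omega^{n-r}$ times $q^T$. Kudla--Millson theory says $\varphi_{KM}(x)$ is a Thom form: it is Poincaré dual to the cycle $Z(x)$ when $T>0$. For degenerate $T$ it equals $Z(x)$ wedged with a power of $-\Omega$, namely the Chern form of $\L^\vee$. Summing over orbits with the weights $\varphi(y)$ gives the series $\sum_{T\ge0}\mathrm{vol}(Z(T,\varphi;K),\Omega^{n-r(T)})q^T$.

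The main obstacle lies in Step 3 in the noncompact case. Because $X_K$ is not compact, the Thom-form identity $\int_{X_K}\varphi_{KM}(x)\wedge\eta=\int_{Z(x)}\eta$ requires a Stokes argument with transgression forms, and the boundary terms must be shown to vanish. I would try to prove this vanishing using the rapid decay of $\varphi_{KM}$ transverse to $Z(x)$ together with the growth bounds from Step 1, still under Weil's condition.
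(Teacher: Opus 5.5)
You are right that the paper gives no proof of this statement: it is quoted from Kudla's Theorem 4.1. So, read as a citation, your proposal agrees with the paper. Your Steps 1 and 2 also follow the standard route: Weil's Siegel-set estimate, then rewriting $\theta_r\wedge\Omega^{n-r}$ as a scalar theta function whose archimedean section is the holomorphic weight $(n+2)/2$ one, then Weil/Kudla--Rallis.

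As a reconstruction, however, Step 3 has a genuine gap, exactly where noncompactness matters. After unfolding, $I_r$ has a coefficient for \emph{every} $T\in\mathrm{Sym}_r(\Q)$. Every $\Gamma$-orbit of $x\in V(\Q)^r$ contributes the integral over $\Gamma_x\backslash D$ of its theta term wedged with $\Omega^{n-r}$. Since $X_K$ is noncompact, $V$ is isotropic, so besides the orbits you treat there are two further kinds: orbits with $Q(x)$ not positive semidefinite, and orbits with $Q(x)\geq 0$ whose span contains a nonzero isotropic vector. For all of these $D_x=\emptyset$, hence $Z(x)=0$, while $\varphi_{KM}(x)$ is a nonzero closed form. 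The identity to be proved says precisely that these orbits contribute nothing, and your Step 3 silently drops them. Moreover, your descriptions of the remaining terms (Poincar\'e dual of $Z(x)$, and $[Z(x)]\wedge(-\Omega)^{r-r(T)}$ for singular $T$) are identities in cohomology. On a noncompact quotient they say nothing about integrals against the non-compactly supported form $\Omega^{n-r}$.

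Your proposed repair, a Stokes argument with boundary terms killed by decay of $\varphi_{KM}$ transverse to $Z(x)$, does not reach these terms. There is no transverse direction when $Z(x)=\emptyset$, and no decay in the excess directions for singular $T$. The vanishing is also not a formal consequence of decay: it fails outside Weil's range. For modular curves, by Funke's theorem, the Kudla--Millson lift of $1$ is Zagier's non-holomorphic weight $3/2$ Eisenstein series, whose non-holomorphic terms come from isotropic vectors and vectors of negative norm. For the orbits with $T>0$, a growth-controlled Stokes argument in the spirit of Kudla--Millson's tubes paper, which the paper's final remark invokes, can be made to work, but it is heavier than needed.

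The missing idea is to use the $G(\R)$-invariance of $\Omega$ rather than cohomology. Write $\varphi_{KM}(x)\wedge\Omega^{n-r}=\varphi'_\infty(g_z^{-1}x)\,\Omega^n$, with $\varphi'_\infty$ the scalar function from your Step 2. Then each orbit contributes $\varphi(x)\,\mathrm{vol}(\Gamma_x\backslash G_x(\R))$ times a purely local orbital integral of $\varphi'_\infty$ over $G(\R)\cdot x\sqrt{v}$, up to the usual normalising factors. Weil's condition justifies this unfolding and keeps these covolumes finite, and no boundary of $X_K$ appears anywhere. What remains is a local evaluation of these integrals:
\begin{itemize}
\item For positive definite span, the fibre integral along $D\to D_x$ is, by invariance, a constant multiple of $\Omega^{n-r(T)}|_{D_x}$. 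The constant has to be computed as $e^{-2\pi\mathrm{tr}(Tv)}$ up to normalisation.
\item For all other orbits, the local integral must be shown to vanish. This is where you use that $\lambda(\varphi'_\infty)$ is the holomorphic weight $(n+2)/2$ section at $s_0$.
\end{itemize}
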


We give a cohomological interpretation of this formula. The coefficients can be interpreted as the degrees of the modular intersection classes by \Cref{geometric volume=degree}, and we must show that the generating series of the classes converges.

\begin{Lemma}\label{Volume Dominance}
    For any smooth closed bounded $2m$-form $\alpha$ on $X_K$, and any closed modular subvariety $\iota:Z\hookrightarrow X_K$ of dimension $m$, we have
    \begin{equation*}
     \left|\int_Z \iota^*\alpha \right|\leq \int_Z \left|\iota^*\alpha \right|d\mathrm{vol}_Z\leq \frac{\|\alpha\|_{L^{\infty}(X)}}{m!}\left|\int_Z \iota^*\Omega^m\right|.
    \end{equation*}
\end{Lemma}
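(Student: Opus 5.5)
The first inequality is just the triangle inequality for integrals: on $Z$ write $\iota^*\alpha = f\,d\mathrm{vol}_Z$ with $f$ a smooth function. Then $|\int_Z\iota^*\alpha|\leq\int_Z|f|\,d\mathrm{vol}_Z$, where the right side is what $\int_Z|\iota^*\alpha|\,d\mathrm{vol}_Z$ means. All the content is in the second inequality, and the plan is to prove it pointwise on $Z$ and then integrate.

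\textbf{Step 1: Wirtinger.} Since $-\Omega$ is the invariant Kähler form on $X_K$ and $Z$ is a complex submanifold of dimension $m$ (smooth in the neat case), $\iota^*(-\Omega)$ is the Kähler form of the induced metric on $Z$. Wirtinger's identity gives
\begin{equation*}
    \frac{\iota^*(-\Omega)^m}{m!}=d\mathrm{vol}_Z .
\end{equation*}
Hence $\left|\int_Z\iota^*\Omega^m\right| = m!\,\mathrm{vol}(Z)$. This is finite because $Z$ is itself a locally symmetric variety of finite volume for the induced metric, or alternatively by the convergence statements earlier in the paper. The sign $(-1)^m$ is absorbed by the absolute value, which is why the absolute value appears in the statement.

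\textbf{Step 2: pointwise comparison.} At each $z\in Z$ we need $|f(z)|\leq\|\alpha\|_{L^\infty(X)}$ in the normalization where the comparison volume form is $\iota^*\Omega^m$. Take an orthonormal basis $e_1,\dots,e_{2m}$ of $T_zZ$. Then
\begin{equation*}
    f(z)=\alpha_z(e_1,\dots,e_{2m}).
\end{equation*}
The comass of $\alpha_z$ on unit simple $2m$-vectors is bounded by the pointwise norm $|\alpha_z|$, so $|f(z)|\leq|\alpha_z|\leq\|\alpha\|_{L^\infty(X)}$. Integrating over $Z$ gives
\begin{equation*}
    \int_Z|\iota^*\alpha|\,d\mathrm{vol}_Z\leq\|\alpha\|_{L^\infty(X)}\,\mathrm{vol}(Z)=\frac{\|\alpha\|_{L^\infty(X)}}{m!}\left|\int_Z\iota^*\Omega^m\right|,
\end{equation*}
which is the claim.

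\textbf{Main obstacle: normalization.} The only delicate point is bookkeeping of constants. One must check that Kudla's normalization of $\Omega$ (Chern form of $\L^\vee$, with the sign convention from \cite{kudla2003integrals}) makes $-\Omega$ exactly the Kähler form of the metric used to define $\|\cdot\|_{L^\infty}$. One must also fix the norm convention on forms, namely comass versus the Hilbert--Schmidt norm, which differ by constants. If they differ, I would simply define $\|\alpha\|_{L^\infty}$ via the metric $-\Omega$ and comass. Any other choice changes the bound only by a harmless constant, which is irrelevant for the intended use of dominating the generating series by the convergent series $\sum_T\mathrm{vol}(Z(T,\varphi;K),\Omega^{n-r(T)})q^T$. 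The possible singularities of $Z$ (non-neat level, or normal crossings of Hecke translates) are handled by integrating over the smooth locus, which has full measure.
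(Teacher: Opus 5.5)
Your proposal is correct and follows essentially the same route as the paper: a pointwise bound $|(\iota^*\alpha)_z|\le|\alpha_{\iota(z)}|\le\|\alpha\|_{L^\infty(X)}$ (valid because $Z$ carries the induced metric), followed by integration and Wirtinger's identity $(-1)^m\iota^*\Omega^m=m!\,d\mathrm{vol}_Z$ to rewrite $\mathrm{vol}(Z)$ as $\frac{1}{m!}\left|\int_Z\iota^*\Omega^m\right|$. Your additional remarks on the comass versus pointwise-norm convention and on integrating over the smooth locus are harmless refinements that the paper leaves implicit.
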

\begin{proof}
    For any point $z\in Z$, we have
    \begin{equation*}
        |(\iota^*\alpha)_z|\leq |\alpha_{\iota(z)}|\leq \|\alpha\|_{L^\infty(X)}.
    \end{equation*}
    The first inequality holds since the metric on $Z$ is induced from $X_K$. Thus
    \begin{equation*}
        \int_Z|\iota^*\alpha|d\mathrm{vol}_Z\leq \|\alpha\|_{L^\infty(X)}\left|\int_Z d\mathrm{vol}_Z\right|.
    \end{equation*}
 Since $-\Omega$ is a Kähler form on $X$, $-\iota^*\Omega$ is the induced Kähler form on $Z$, we have
 \begin{equation*}
     (-1)^m\iota^*\Omega^m=m!d\mathrm{vol}_Z
 \end{equation*}
   by Wirtinger's formula. Therefore,
    \begin{equation*}
        \int_Z|\iota^*\alpha|d\mathrm{vol}_Z\leq \frac{\|\alpha\|_{L^{\infty}(X)}}{m!}\left|\int_Z \iota^*\Omega^m\right|.
    \end{equation*}
\end{proof}

\begin{Corollary}\label{Modularity of modular intersection class}
    For $\tau\in \mathbb{H}^r$, the formal sum of the modular intersection classes 
    \begin{equation*}
        \phi_r(\tau,\varphi):=\sum_{T\in Sym_r(\Q)_{\geq0}}\Icl(Z(T,\varphi;K))\cup \ell^{r-r(T)}q^T
    \end{equation*}
 converges absolutely in $IH^{2r}(\X;\C)$ when $r<n+1-\mathrm{Witt}(V)$.
\end{Corollary}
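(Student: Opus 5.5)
Since $IH^{2r}(\X;\C)$ is finite dimensional, I will prove absolute convergence by testing against a basis of the dual space. By \Cref{cup product on IH}, the pairing $(\xi,\beta)\mapsto \deg_X(\xi\cup\beta)$ with $IH^{2n-2r}(\X;\C)$ is perfect. So it suffices to find finitely many classes $\beta_1,\dots,\beta_N$ spanning $IH^{2n-2r}(\X;\C)$ such that, for each $i$, the scalar series
\begin{equation*}
\sum_{T\geq 0}\deg_X\bigl(\Icl(Z(T,\varphi;K))\cup \ell^{r-r(T)}\cup\beta_i\bigr)\,q^T
\end{equation*}
converges absolutely. Absolute convergence of all coordinates with respect to the dual basis then gives absolute convergence in $IH^{2r}(\X;\C)$ for any norm.

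\textbf{Choice of test classes.} By Zucker's conjecture and the identification of reduced $L^2$-cohomology with $L^2$-harmonic forms (used in the proof of \Cref{p to 2 surjectivity}), every class in $IH^{2n-2r}(\X;\C)$ is represented by an $L^2$-harmonic form. By \cite[Lemma 3.10]{bergeron2006proprietes}, such a form is bounded. So I take $\beta_i=[\alpha_i]$ with $\alpha_i$ smooth, bounded and closed. The class $\ell$ is represented by the invariant form $\Omega$, which is bounded and closed. Moreover $\ell$ lies in the image of $H^2(\X)\to IH^2(\X)$ via the extension of $\L^\vee$, and its image in $L^2$-cohomology is $[\Omega]$ by the compatibility of the pullback with restriction of forms discussed before \Cref{geometric volume=degree}. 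Hence $\gamma_{i,T}:=\Omega^{r-r(T)}\wedge\alpha_i$ is a smooth bounded closed form of degree $2(n-r(T))$, representing $\ell^{r-r(T)}\cup\beta_i$.

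\textbf{Reduction to volumes.} Write $Z(T,\varphi;K)=\sum_y\varphi(y)Z(y;K)$; this sum is finite for fixed $T$. By linearity and \Cref{geometric volume=degree} applied to each component $Z(y;K)$, which is a modular subvariety of dimension $n-r(T)$, I get
\begin{equation*}
\deg_X\bigl(\Icl(Z(T,\varphi;K))\cup[\gamma_{i,T}]\bigr)=\sum_y\varphi(y)\int_{Z(y;K)}\iota^*\gamma_{i,T}.
\end{equation*}
Now apply \Cref{Volume Dominance} with $m=n-r(T)$. Since $\Omega$ is bounded, there is a constant $C_i$ independent of $T$ with $\|\gamma_{i,T}\|_\infty\leq C_i$; the exponent $r-r(T)$ takes only finitely many values. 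Therefore each term is bounded by
\begin{equation*}
\frac{C_i}{(n-r(T))!}\sum_y|\varphi(y)|\,\bigl|\mathrm{vol}(Z(y;K),\Omega^{n-r(T)})\bigr|\,|q^T|.
\end{equation*}

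\textbf{Main obstacle.} It remains to show that $\sum_T\sum_{Q(y)=T}|\varphi(y)|\,\mathrm{vol}(Z(y;K),\Omega^{n-r(T)})\,|q^T|$ converges. The absolute values cost nothing on the volumes: by Wirtinger each $\mathrm{vol}(Z(y;K),\Omega^{m})$ is $\pm m!$ times a positive Riemannian volume with a fixed sign. The only issue is $|\varphi|$ versus $\varphi$.

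I would handle this by replacing $\varphi$ with $|\varphi|$. Since $\varphi$ is a locally constant, compactly supported and $K$-invariant Schwartz function, so is $|\varphi|$, and it is still admissible in \Cref{generating series of volumes}. Applying Kudla's identity with $|\varphi|$ at the point $iv$, where $v=\mathrm{Im}\,\tau$, gives
\begin{equation*}
\sum_T \mathrm{vol}(Z(T,|\varphi|;K),\Omega^{n-r(T)})\,e^{-2\pi\mathrm{tr}(Tv)},
\end{equation*}
which converges absolutely because $I_r(iv,|\varphi|)$ converges absolutely under Weil's condition $r<n+1-\mathrm{Witt}(V)$. Its terms dominate ours, since $|q^T|=e^{-2\pi\mathrm{tr}(Tv)}$.

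The delicate point is that Kudla's identity holds termwise with non-negative terms, so that the theta integral really controls the sum of absolute values. If that fails, I would instead bound $\theta_r(iv,|\varphi|)$ pointwise and integrate against $|\Omega^{n-r}|$ directly.
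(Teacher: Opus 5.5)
Your proposal is correct and is essentially the paper's proof: you pair against bounded $L^2$-harmonic representatives of $IH^{2(n-r)}(\X;\C)$ (Zucker's conjecture plus Bergeron's bound), turn degrees into volumes via the geometric volume/degree comparison, dominate by $\Omega$-volumes with the volume dominance lemma, and conclude from Kudla's Siegel--Weil identity, with your domination by $|\varphi|$ doing the job of the paper's reduction to $\varphi\geq 0$ by linearity. One step should be rephrased: absolute convergence of the $q$-series comes from Kudla's identity exhibiting it as the Fourier expansion of the holomorphic, lattice-periodic function $I_r(\cdot,|\varphi|)$ (a fact the paper also takes for granted), not from the absolute convergence of the theta integral itself.
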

\begin{proof}
By linearity, it suffices to assume that $\varphi\geq0$ is a real-valued non-negative Schwartz function. Then the coefficients of $Z(y;K)$ in $Z(T,\varphi;K)$ are all non-negative. It follows that for each closed bounded smooth $2(n-r)$-form $\eta$ on $X_K$, by \Cref{Volume Dominance}, there exists a constant $C_\eta$ such that for all $T\geq 0$, 
\begin{equation*}
    |\mathrm{vol}(Z(T,\varphi;K),\eta\wedge \Omega^{r-r(T)})|\leq C_\eta|\mathrm{vol}(Z(T,\varphi;K), \Omega^{n-r(T)})|.
\end{equation*}
Indeed, we first note that 
\begin{equation*}
    |\mathrm{vol}(Z(T,\varphi;K), \Omega^{n-r(T)})|=\sum_{Q(y)=T}\varphi(y)|\mathrm{vol}(Z(y;K),\Omega^{n-r(T)})|
\end{equation*}
by our assumption that $\varphi\geq 0$, and the sign of $\mathrm{vol}(Z(y;K),\Omega^{n-r(T)})$ depends only on the dimension. And for each $T\geq 0$, one applies the lemma to $\alpha=\eta\wedge \Omega^{r-r(T)}$. Note that the constant in \Cref{Volume Dominance} depends only on $\alpha$ and the dimension, and since there are only finitely many of them, the bound $C_\eta$ is independent of $T$.

It follows by \Cref{generating series of volumes} that for any closed smooth bounded $2(n-r)$-form $\eta$, the formal series
\begin{equation*}
    \sum_{T\in Sym_r(\Q)_{\geq0}} \mathrm{vol}(Z(T,\varphi;K), \eta\wedge\Omega^{r-r(T)})q^T
\end{equation*}
converges absolutely. By Zucker's conjecture and \cite[Proposition 5.6]{borel1983laplacian}, there are isomorphisms
    \begin{equation*}
        IH^{2(n-r)}(\X_K;\C)\cong H^{2(n-r)}_{(2)}(X_K;\C)\cong \H^{2(n-r)}_{(2)}(X_K),
    \end{equation*}
where $H^{2(n-r)}_{(2)}(X_K;\C)$ is the $L^2$-cohomology, and $\H^{2(n-r)}_{(2)}(X_K)$ is the space of $L^2$-harmonic forms. By \cite[Lemma 3.10]{bergeron2006proprietes}, any $L^2$-harmonic form $\eta$ on $X_K$ is (smooth) bounded. We view $\eta$ as a linear functional on $IH^{2r}(\X;\C)$ via Zucker's conjecture. By \Cref{geometric volume=degree}, the series
\begin{equation*}
    \sum_{T\in Sym_r(\Q)_{\geq0}} \deg_{X_{K}}(\Icl(Z(T,\varphi;K)\cup \ell^{r-r(T)}\cup [\eta])q^T
\end{equation*}
is equal to the series of volumes above, and therefore converges absolutely for every $L^2$-harmonic $2(n-r)$-form $\eta$. Since $IH^{2r}(\X_K;\C)$ is a finite-dimensional vector space, and $L^2$-harmonic forms exhaust all linear functionals on it by Poincaré duality, we conclude that the generating series of the modular intersection classes converges as well. 
\end{proof}

Altogether, we obtain a cohomological interpretation of Kudla's Siegel--Weil formula, \Cref{generating series of volumes}.

\begin{Theorem}\label{Cohomological interpretation of Siegel--Weil}
    When $r<n+1-\mathrm{Witt}(V)$, we have
    \begin{equation*}
        \deg(\phi_r(\tau,\varphi)\cup\ell^{n-r})=\deg(\Icl(X_K)\cup \ell^n) E_r(\tau, s_0,\varphi).
    \end{equation*}
\end{Theorem}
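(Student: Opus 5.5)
The plan is to compute the left-hand side term by term, using that $\phi_r(\tau,\varphi)$ converges absolutely in the finite-dimensional space $IH^{2r}(\X_K;\C)$ by \Cref{Modularity of modular intersection class}. Both $\cup\,\ell^{n-r}$ and $\deg$ are linear maps on finite-dimensional spaces, hence continuous, so
\begin{equation*}
\deg(\phi_r(\tau,\varphi)\cup\ell^{n-r})=\sum_{T\geq 0}\deg\big(\Icl(Z(T,\varphi;K))\cup \ell^{n-r(T)}\big)q^T .
\end{equation*}
The exponents combine as $(r-r(T))+(n-r)=n-r(T)$. Next I will identify each coefficient with a volume. The class $\ell=c_1(\L^\vee)$ lies in the image of $H^2(\X_K)\to IH^2(\X_K)$, since $\L^\vee$ extends to an ample bundle on $\X_K$. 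Its pullback to $X_K$ is represented by the Chern form $\Omega$, which is smooth, bounded and invariant, and $\ell^{n-r(T)}$ is represented by $\Omega^{n-r(T)}$. By \Cref{cup product on IH}, the cup product on $IH^*$ is compatible with the maps from $H^*(\X_K)$ and to $H^*(X_K)$.

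The point needing care is that the class of $\Omega^{n-r(T)}$ in $L^2$-cohomology agrees with $\ell^{n-r(T)}$ under Zucker's isomorphism. I would argue this through the identification of the pullback with restriction of bounded forms explained before \Cref{geometric volume=degree}. The canonical map $H^*(\X)\to IH^*(\X)\cong H^*_{(2)}(X)$ sends a Chern class of an extended bundle to the class of its bounded Chern form, since Chern forms of the canonical extension with invariant metric have this property (Mumford goodness). Granting this, \Cref{geometric volume=degree} applied to each component $Z(y;K)$ (with Hecke-translated embeddings, extended linearly over the finitely many $y$ with $\varphi(y)\neq0$ modulo $K$) gives
\begin{equation*}
\deg\big(\Icl(Z(T,\varphi;K))\cup\ell^{n-r(T)}\big)=\mathrm{vol}(Z(T,\varphi;K),\Omega^{n-r(T)}).
\end{equation*}
The same argument applied to $Z=X_K$ gives $\deg(\Icl(X_K)\cup\ell^n)=\mathrm{vol}(X_K,\Omega^n)$, where $\Icl(X_K)$ is the unit class.

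Finally, substituting these into \Cref{generating series of volumes} gives $\sum_T \mathrm{vol}(Z(T,\varphi;K),\Omega^{n-r(T)})q^T=\mathrm{vol}(X_K,\Omega^n)E_r(\tau,s_0,\varphi)$, which is the claimed identity. One subtlety is the sign and orientation convention, where $-\Omega$ is the Kähler form. These signs are the same on both sides because both are defined as $\int\iota^*\Omega^{\bullet}$, so no correction should arise.

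The main obstacle is the compatibility step: that the intersection-cohomology class $\ell^{k}$ is represented in $L^2$-cohomology by the bounded form $\Omega^k$, and that the degree map is compatible with this. If the goodness argument proves awkward, I would instead pull back along $H^*(\X_K)\to H^*(\hX)$ and use the special-form and $L^p$ realization of \Cref{p to 2 surjectivity}. There the class of $\Omega^k$ in $\W^0H^*$ maps to its $L^2$ class, so this route should also give the compatibility.
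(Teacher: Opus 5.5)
Your proposal is correct and follows the paper's proof. You expand $\deg(\phi_r(\tau,\varphi)\cup\ell^{n-r})$ term by term using the absolute convergence from the preceding corollary, identify each coefficient $\deg(\Icl(Z(T,\varphi;K))\cup\ell^{n-r(T)})$ with $\mathrm{vol}(Z(T,\varphi;K),\Omega^{n-r(T)})$ via the volume--degree proposition, and conclude with Kudla's Siegel--Weil formula. The compatibility you single out as the main obstacle, namely that $\ell^{k}$ corresponds to the $L^2$-class of the bounded form $\Omega^{k}$, is used tacitly in the paper, so your version is more explicit on that point than the original.
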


\begin{proof}
   It follows by \Cref{Modularity of modular intersection class}, \Cref{geometric volume=degree} and \Cref{generating series of volumes} that 
    \begin{equation*}
\begin{aligned}
       \deg(\phi_r(\tau,\varphi)\cup\ell^{n-r}) &= \sum_{T\in Sym_r(\Q)_{\geq0}}\deg(\Icl(Z(T,\varphi;K)\cup \ell^{r-r(T)}\cup \ell^{n-r})q^T\\
       &=\sum_{T\in Sym_r(\Q)_{\geq 0}}\mathrm{vol}(Z(T,\varphi;K),\Omega^{n-r(T)})q^T\\
       &=\deg(\Icl(X_K)\cup\ell^n)E_r(\tau,s_0,\varphi).
\end{aligned}
    \end{equation*}
\end{proof}

We end with a remark on the relation of our construction to Kudla's program.  

\begin{Remark}\label{Kudla remark}
The same argument also applies to the unitary Shimura varieties. Let $X$ be a unitary Shimura variety of signature $(n, 1)$, as considered in \cite{greer2025cohomological}. When $r\leq n/2$, the intersection cohomology $IH^{2r}(\X_K;\C)$ is isomorphic to $W_{2r}H^{2r}(X_K;\C)$, then the  generating series of modular intersection classes $\phi_r(\tau,\varphi)$ is a holomorphic Siegel modular form by \cite[Theorem 3]{kudla1990intersection}. Indeed, Kudla and Millson show that the form $\phi_r(-,\varphi)$ defines an element in 
    \begin{equation*}
M_{n+1}(U(r,r))\otimes W_{2r}H^{2r}(X_K;\C)\cong M_{n+1}(U(r,r))\otimes IH^{2r}(\X_K;\C),
    \end{equation*}
  where $M_{n+1}(U(r,r))$ is the space of Hermitian modular forms of weight $n+1$ of $U(r,r)$. When $r>n/2$ and $X$ is noncompact, the results \cite[Lemma 3.3, Theorem 4.1]{kudla1990intersection} cannot extend to any $L^2$-harmonic form $\eta$, and therefore the theta integral 
  \begin{equation*}
      \int_{X_K}\theta_{r}(\tau,\varphi)\wedge\eta
  \end{equation*}
  may not be holomorphic, and its $T$-th Fourier coefficients may not match with the coefficients of $\phi_r(\tau, \varphi)\cup [\eta]$, the generating series of modular intersection classes. Therefore, $\phi_r(\tau, \varphi)$ may not match with Kudla--Millson's theta function. We note that the coefficients do agree if $T>0$, by the method in \cite[Theorem 2.1, \S 5]{kudla1988tubes}; and applying the functional defined by the volume one does obtain a holomorphic function by the Siegel--Weil formula under Weil's condition. Comparing with the results like \cite[Theorem 1.2, Theorem 1.3]{greer2025cohomological} on the toroidal compactification $X_{\Sigma}^{tor}$, we expect the form $\phi_r(\tau,\varphi)$ to be at least quasi-modular for higher degrees $r>n/2$. In fact, the modular intersection class is a universal class in all smooth compactifications (\Cref{modular intersection class is universal}), and one always has the relation (\Cref{Decomposition is canonical})
  \begin{equation*}
      IH^{2r}(\X;\C)=\textrm{im}(H^{2r}(\hX;\C)\to H^{2r}(X_{\Sigma}^{tor};\C)).
  \end{equation*}
  It will be interesting to compare the strategy here with the (corrections of) cycles considered in \cite{greer2025cohomological}, and the relation between the analytic barrier here and the barrier to \cite[Conjecture 1.7]{greer2025cohomological}.
\end{Remark}

\phantomsection

\addcontentsline{toc}{section}{References}
\bibliography{References}
\end{document}